\definecolor{gray}{gray}{0.5}
\numberwithin{equation}{section} 
\newtheorem{theorem}{Theorem}[section]
\newtheorem{lemma}[theorem]{Lemma} 
\newtheorem{corollary}[theorem]{Corollary}
\newtheorem{proposition}[theorem]{Proposition} 
\newtheorem{remark}[theorem]{Remark}
\newtheorem{definition}[theorem]{Definition}
\def\C{\mathbb C}
\def\R{\mathbb R}
\def\Q{\mathbb Q}
\def\Z{\mathbb Z}
\DeclareMathOperator{\Hom}{Hom}
\DeclareMathOperator{\Ker}{Ker}
\DeclareMathOperator{\codim}{codim}
\DeclareMathOperator{\Ad}{Ad}
\newcommand{\RL}{\Lambda_{r}}
\newcommand{\WL}{\Lambda}
\newcommand{\CRL}{{\Lambda_{r}^{\vee}}}
\newcommand{\CWL}{\Lambda^{\vee}}
\newcommand{\rkg}{n}
\newcommand{\fan}{\Sigma}
\newcommand{\qp}[1]{q_{\alpha_{#1}}}
\newcommand{\nil}{e}
\newcommand{\RY}[2]{Y^{\circ}_{#1,#2}}
\newcommand{\Ry}[2]{C_{#1,#2}}
\newcommand{\Gad}{G^{\text{\rm ad}}}
\newcommand{\Bad}{B^{\text{\rm ad}}}
\newcommand{\Tad}{T^{\text{\rm ad}}}
\newcommand{\Uad}{U^{\text{\rm ad}}}
\newcommand{\Umad}{(U^-)^{\text{\rm ad}}}
\newcommand{\xad}{x'}
\newcommand{\yad}{y'}
\newcommand{\X}[1]{X(\fan)_{{#1}}^{\circ}}
\newcommand{\Xp}[1]{X(\fan)_{{#1};> 0}^{\circ}}
\newcommand{\shapo}[2]{\langle{#1}\hspace{1pt},\hspace{1pt}{#2}\rangle}
\begin{document}
\text{}\vspace{-10pt}
\title[Peterson variety and strongly dominant weight polytope]{Totally nonnegative Peterson variety \\and strongly dominant weight polytope}

\keywords{Peterson variety, total positivity, strongly dominant weight polytope}

\subjclass[2020]{14M15, 14M25, 05E14, 20G20, 15B48}

\author {Hiraku Abe}
\address{Faculty of Science, Department of Applied Mathematics, Okayama University of Science, 1-1 Ridai-cho, Kita-ku, Okayama, 700-0005, Japan}
\email{hirakuabe@globe.ocn.ne.jp}

\author {Tao Gui}
\address{Beijing International Center for Mathematical Research, Peking University, No. 5 Yiheyuan Road, Haidian District, Beijing 100871, P.R. China}
\email{guitao18@mails.ucas.ac.cn} 

\author {Haozhi Zeng}
\address{School of Mathematics and Statistics, Huazhong University of Science and Technology, Wuhan, 430074, P.R. China}
\email{zenghaozhi@icloud.com} 

\begin{abstract}
We study the totally nonnegative part of the Peterson variety in arbitrary Lie type and establish its connection to the strongly dominant weight polytope. In particular, we prove that the totally nonnegative part of the Peterson variety is a regular CW-complex, which is homeomorphic to a cube as a cell-decomposed space. This confirms a conjecture of Rietsch for all Lie types.
\end{abstract}

\maketitle

\section{Introduction}\label{sec Introduction}
Let $G$ be a semisimple algebraic group over $\C$ with a Borel subgroup $B\subset G$. The Peterson variety $Y$ is a certain remarkable subvarieties of the flag variety $G/B$, introduced by Dale Peterson \cite{peterson1997quantum} to realize quantum cohomology rings of all Langlands dual partial flag varieties geometrically. By using the geometry of $Y$, he discovered a connection of the quantum cohomology of those flag varieties with the homology of the affine Grassmannian $\mathcal{G} r_{G^{\vee}}$ of the Langlands dual group $G^{\vee}$, which was verified by Lam--Shimozono in \cite{lam2010quantum}. It is also closely related to the wonderful compactification of a certain unipotent subgroup of $G$ \cite{Bualibanu2017Peterson}. The geometry and topology of the Peterson variety have been studied extensively, see, for example, \cite{abe2024geometry,abe2023peterson,Bualibanu2017Peterson,Goldin2024positivity,gui2025structure,harada2015equivariant,kostant1996flag}.

The theory of total positivity for reductive algebraic groups was pioneered by Lusztig \cite{lusztig1994total} as a broad extension of the classical theory of Schoenberg and Gantmacher--Krein on total positivity for matrices. It has close connections to cluster algebras \cite{fomin2010total}, KP equations \cite{kodama2014kp,kodama2011kp}, Poisson geometry \cite{lu2020bott}, and the physics of scattering amplitudes \cite{williams2021positive}. Lusztig \cite{lusztig1994total,lusztig1998total} also defined the totally positive and the totally nonnegative parts of the flag variety, which naturally induces the corresponding definitions for subvarieties of the flag variety. Of particular interests are so-called {\it regularity theorems} on CW-complex structures on the totally non-negative parts of these varieties, see \cite{bao2024product,galashin2022regularity,hersh2014regular} for the most recent developments. Note that convex polytopes are prototypical examples of regular CW
complexes and the topology of a regular CW complex is completely determined by the combinatorial
structure of its associated cell closure poset \cite{bjorner1984posets}.

The interaction bewteen the Peterson variety and the total positivity was first studied by Rietsch. In \cite{rietsch2003totally}, she used Peterson's theory in type A to obtain a parametrization of totally nonnegative Toeplitz matrices. In \cite{Rietsch2006}, she gave a mirror construction of the totally nonnegative part of the Peterson variety $Y_{\geq 0}$ in type A and obtained its cell decomposition. She also showed in \cite{Rietsch2006} that the totally nonnegative part of the Peterson variety in type A is contractible. Based on the structure of the cells, she conjectured that as a cell decomposed space $Y_{\geq 0}$ is homeomorphic to a cube. In \cite{abe2025totally}, the first and the third authors of this paper gave a proof of Rietsch's conjecture in type A by using toric geometry closely related to the Peterson variety and concrete computations.

In this paper, we study the totally nonnegative part of the Peterson variety in general Lie types. Now we summarize our main results. By intersecting the Bruhat and opposite Bruhat decompositions of the flag variety $G/B$ and the Peterson variety $Y$, one can obtain the Richardson stratification of $Y$. In Proposition \ref{prop description of YKJ}, we give a description of the Richardson strata of $Y$ in terms of certain functions $\Delta_{\varpi_i}$. Using the positivity of these functions on the totally nonnegative part (Proposition \ref{prop Delta is nonnegative}), we give a description of the Richardson strata of the totally nonnegative part $Y_{\geq 0}$ (Proposition \ref{prop description for R KI>0 2}). Through a particular morphism $\Psi: Y \rightarrow X(\Sigma)$ constructed by the first and the third authors of this paper in \cite{abe2023peterson}, the Peterson variety $Y$ is connected to a particular projective toric orbifold $X(\Sigma)$. We show that this morphism can be restricted to the nonnegative parts $\Psi_{\geq 0}: Y_{\geq 0} \rightarrow X(\Sigma)_{\geq 0}$, which sends the Richardson strata of $Y_{\geq 0}$ to the toric orbit strata of $X(\Sigma)_{\geq 0}$ (Proposition \ref{prop restrict}). We prove that $\Psi_{\geq 0}: Y_{\geq 0} \rightarrow X(\Sigma)_{\geq 0}$ is actually a homeomorphism (Theorem \ref{thm homeo}). Since the torus orbit decomposition of the a projective toric variety gives rise to a cell decomposition of its nonnegative part (see Proposition \ref{prop nonnengative stratum parametrization} in our case), it follows that the Richardson stratification of $Y_{\geq 0}$ is actually a cell decomposition. Note that the moment map restricts to a cell-preserving homeomorphism between $X(\Sigma)_{\geq 0}$ and its moment polytope. Here, the moment polytope of $X(\Sigma)$ is the strongly dominant weight polytope, which is defined to be the intersection of the dominant Weyl chamber and a weight polytope associated with a regular weight. Since the strongly dominant weight polytope is proved to be combinatorially equivalent to a cube\footnote{Topologically, this is equivalent to the existence of a (piecewise linear) homeomorphism between the strongly dominant weight polytope and the standard cube, which restricts to homeomorphisms between their facets (and hence all the faces).} \cite{burrull2023strongly}, we deduce the following main theorem of this paper, which conforms a conjecture of Rietsch\footnote{We thank Konstanze Rietsch for private communication of her conjecture.} for arbitrary Lie type.

\begin{theorem} \label{thm main}
    The totally nonnegative part of the Peterson variety is homeomorphic to the strongly dominant weight polytope as a cell-decomposed space. In particular, it is a regular CW-complex, which is homeomorphic to a cube.
\end{theorem}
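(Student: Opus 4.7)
The plan is to realize the required homeomorphism from $Y_{\geq 0}$ to the strongly dominant weight polytope $P$ as a composition $\mu \circ \Psi_{\geq 0}$, where $\Psi_{\geq 0}: Y_{\geq 0} \to X(\Sigma)_{\geq 0}$ is the restriction to nonnegative parts of the morphism $\Psi: Y \to X(\Sigma)$ from \cite{abe2023peterson}, and $\mu: X(\Sigma)_{\geq 0} \to P$ is the restriction of the moment map. Once this composition is shown to be a cell-preserving homeomorphism, the theorem reduces to the result of \cite{burrull2023strongly} that $P$ is combinatorially equivalent to a cube.

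Two of the three ingredients are essentially in place from the propositions announced earlier in the excerpt. The well-definedness of $\Psi_{\geq 0}$ and its compatibility with stratifications follow from Propositions~\ref{prop description of YKJ}, \ref{prop Delta is nonnegative}, \ref{prop description for R KI>0 2}, and \ref{prop restrict}: the functions $\Delta_{\varpi_i}$ describe the Richardson strata of $Y$ and $Y_{\geq 0}$, remain nonnegative on $Y_{\geq 0}$, and $\Psi_{\geq 0}$ carries Richardson strata to toric orbit strata. Separately, the classical theory of nonnegative parts of projective toric varieties gives a cell-preserving homeomorphism between $X(\Sigma)_{\geq 0}$ and its moment polytope, which by the construction of $\Psi$ in \cite{abe2023peterson} is precisely the strongly dominant weight polytope $P$.

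The main obstacle is establishing Theorem~\ref{thm homeo}, namely that $\Psi_{\geq 0}$ is a homeomorphism. Since $Y_{\geq 0}$ and $X(\Sigma)_{\geq 0}$ are each closed inside a compact projective variety, both are compact Hausdorff, and it suffices to prove $\Psi_{\geq 0}$ is a continuous bijection; continuity is inherited from $\Psi$. For bijectivity I would argue stratum by stratum. Proposition~\ref{prop restrict} already matches up the two stratifications, so the problem reduces to showing that on each Richardson stratum of $Y_{\geq 0}$ the map $\Psi_{\geq 0}$ is a bijection onto the corresponding toric orbit stratum of $X(\Sigma)_{\geq 0}$. On such a stratum the nonvanishing $\Delta_{\varpi_i}$ supply explicit positive coordinates, and in these coordinates $\Psi_{\geq 0}$ should act essentially tautologically, reducing the verification to an explicit identification of open positive orthants of matching dimension. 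Since both stratifications partition their respective spaces, global bijectivity then follows.

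Granted the homeomorphism $\Psi_{\geq 0}$, composing with $\mu$ produces a cell-preserving homeomorphism $Y_{\geq 0} \cong P$. Applying \cite{burrull2023strongly}, together with the footnote's interpretation of combinatorial equivalence as a face-preserving piecewise-linear homeomorphism, we obtain a cell-preserving homeomorphism from $P$ to the standard cube. The total composition realizes $Y_{\geq 0}$ as a cell-decomposed space homeomorphic to a cube, and since a cube is a regular CW-complex, so is $Y_{\geq 0}$, completing the proof of Theorem~\ref{thm main}.
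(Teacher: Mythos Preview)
Your overall architecture matches the paper's: realize the homeomorphism as $\mu_{\geq 0}\circ\Psi_{\geq 0}$, reduce Theorem~\ref{thm homeo} to a stratum-by-stratum bijection using compactness and Proposition~\ref{prop restrict}, and then invoke \cite{burrull2023strongly}. The serious gap is in the sentence where you assert that on each stratum ``$\Psi_{\geq 0}$ should act essentially tautologically, reducing the verification to an explicit identification of open positive orthants of matching dimension.'' This step is not tautological and is in fact the heart of the paper.

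Concretely, on a stratum $\RY{K}{J;>0}$ the elements are parametrized by $x\in (U_J)^{e_J}_{\ge0}$ (Proposition~\ref{prop description for R KI>0 2}), not by the values $\Delta_{\varpi_i}(x\dot{w}_J)$. The restriction of $\Psi_{\ge0}$ to this stratum is, after normalizing the $q_{\alpha_i}$ coordinates via Lemma~\ref{lemm ad part for xwJB-} and the $y$-coordinates via Proposition~\ref{prop nonnengative stratum parametrization}, the map
\[
(U_J)^{e_J}_{\ge0}\longrightarrow \R^{J}_{\ge0},\qquad x\longmapsto \big(\Delta_{\varpi'_i}(x\dot{w}_J)\big)_{i\in J}.
\]
That this map is a bijection (indeed a homeomorphism) is precisely Theorem~\ref{thm LR in our setting} applied to the semisimple group $G_J$, and the paper devotes Sections~\ref{sec splittings} and~\ref{sec LR in our setting} to deducing it from the Lam--Rietsch parametrization \cite[Theorem~7.3]{lam2016total} for simple groups of adjoint type, via a reduction to simple factors and a comparison (Proposition~\ref{prop LR and our}) between $\Delta_{\varpi_i}$ and their functions $\Delta_i$. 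Without this input there is no reason the $\Delta_{\varpi_i}$ should separate points of $(U_J)^{e_J}_{\ge0}$, nor that every tuple in the target orthant is attained; the equality of dimensions alone (both are $|J|$) does not give bijectivity. You need to identify and invoke this key claim rather than treat it as automatic.
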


Now we give some final remarks. Firstly we remark that the functions $\Delta_{\varpi_{i}}$ and $\qp{i}$ appearing in our map $\Psi_{\geq 0}: Y_{\geq 0} \rightarrow X(\Sigma)_{\geq 0}$ are actually affine Schubert classes and quantum parameters in Peterson's theory, see \cite[Remark 6.5]{lam2016total} and \cite[Remark~3.3.7]{rietsch2008mirror}. In \cite[Theorem 7.3]{lam2016total}, Lam--Rietsch essentially used the functions $\Delta_{\varpi_{i}}$ (up to a certain power because they work on adjoint type while we work on the simply-connected case, see Proposition \ref{prop LR and our}) to give a parametrization of the totally nonnegative part of an affine piece of the Peterson variety. We remark that this parametrization is important in the proof of the bijectivity of $\Psi_{\geq 0}: Y_{\geq 0} \rightarrow X(\Sigma)_{\geq 0}$. By analyzing a crucial connection with the affine Grassmannian and geometric Satake equivalence, Lam--Rietsch also showed that different notions of positivity---quantum Schubert and quantum parameter positivity, Lusztig's total positivity, and affine Schubert positivity---on an affine part of the Peterson variety all coincide. Finally, note that $Y_{\geq 0}$ and $X(\Sigma)_{\geq 0}$ are defined in quite different ways; the totally nonnegative part $Y_{\geq 0}$ is defined in terms of root datum of $G$ whereas $X(\Sigma)_{\geq 0}$ is defined in terms of semigroup algebras. It is quite remarkable that so many apparently different notions of positivity coincide in this setting.

This paper is organized as follows. In Section \ref{sec Notation}, we fix some notations which we use throughout this paper. In Section \ref{sec Richardson strata}, we recall the definition of the Peterson variety $Y$ and the construction of its Richardson strata. In Section \ref{sec Totally nonnegative Richardson strata}, we study the totally nonnegative part $Y_{\geq 0}$ and give a description of its Richardson strata. In Section \ref{sec toric}, we study the toric orbifold $X(\Sigma)$ whose moment polytope is the strongly dominant weight polytope. In Section \ref{sec map}, we recall the definition of the morphism $\Psi: Y \rightarrow X(\Sigma)$, and reduce the fact that it induces a homeomorphism $\Psi_{\geq 0}: Y_{\geq 0} \rightarrow X(\Sigma)_{\geq 0}$ to the key claim (Theorem \ref{thm LR in our setting}). Sections \ref{sec splittings} and \ref{sec LR in our setting} are devoted to give a proof of this key claim, which completes the proof of Theorem \ref{thm main}. In the Appendix, we provide proofs of two well-known lemmas for the reader because of the lack of suitable references.

\vspace{10pt}

\subsection{Acknowledgments}
We are grateful to Thomas Lam, Changzheng Li, and Zhi L$\ddot{\text{u}}$ for valuable discussions. We would particularly like to thank Konstanze Rietsch for telling us her conjecture. This research was partially done while the authors were visiting the  school of Mathematical Sciences in Fudan University.
The first author is supported in part by JSPS Grant-in-Aid for Scientific Research(C): 23K03102. 
The second author is supported in part by NSFC:  12471309.
The third author is supported in part by NSFC:  11901218.

\vspace{30pt}

\section{Notation}\label{sec Notation}

\subsection{Set up}\label{subsec set up}

Let $G$ be a simply connected, semisimple algebraic group over $\C$ of rank $\rkg$, split over $\mathbb{R}$. Choose a Borel subgroup $B$ defined over $\mathbb{R}$ and a split maximal torus $T\subset B$. Let $W=N(T)/T$ be the Weyl group, where $N(T)$ is the normalizer of $T$ in $G$. Denoting the center of $G$ by $Z_G$, we have $Z_G\subset T$ (\cite[Sect.\ 26 Exercise 2]{humphreys2012linear}).

Let $B^-$ be the opposite Borel subgroup of $G$ so that $T=B\cap B^-$. We denote by $U\subset B$ and $U^-\subset B^-$ the unipotent radicals of $B$ and $B^-$, respectively.

Let $\mathfrak{g}$ be the Lie algebra of $G$. Similarly, we use the German typeface to denote the Lie algebra of an algebraic group; for example, $\mathfrak{b}$ is the Lie algebra of $B$, and $\mathfrak{u}^-$ is the Lie algebra of $U^-$, and so on.
The adjoint action of $T$ on $\mathfrak{g}$ gives us the root space decomposition 
\begin{align*}
 \mathfrak{g}=\mathfrak{t}\oplus\bigoplus_{\alpha\in\Phi}\mathfrak{g}_{\alpha}, 
 \end{align*}
where $\Phi$ is the set of roots, and $\mathfrak{g}_{\alpha}$ is the root space for $\alpha\in\Phi$.
Let $\Phi^+$ be the set of positive roots associated to $B$ and $\{\alpha_1,\ldots,\alpha_{\rkg}\}\subset\Phi^+$ the set of simple roots. Denote by $I$ the Dynkin diagram of $\Phi$ which we identify with the indexing set $\{1,\ldots,\rkg\}$ for the simple roots $\alpha_1,\ldots,\alpha_{\rkg}$ unless otherwise specified. 
For each root $\alpha\in\Phi$, we denote by $U_{\alpha}$ the root subgroup of $G$ associated to $\alpha$.

Let $\RL=\Hom(T/Z_G,\C^{\times})$ be the root lattice of $T$, and let $\WL=\Hom(T,\C^{\times})$ be the weight lattice of $T$. There is the canonical inclusion $\RL\hookrightarrow \WL$ which takes the pullback by the quotient map $T\rightarrow T/Z_G$. Regarding them as $\Z$-modules, we have
\begin{align*}
 \RL = \bigoplus_{i\in I} \Z \alpha_i\quad \text{and} \quad \WL = \bigoplus_{i\in I} \Z \varpi_i,
\end{align*}
where $\varpi_1, \ldots, \varpi_{\rkg}$ are the fundamental weights of $T$. 
Also, let $\CRL=\Hom(\C^{\times},T)$ be the coroot lattice, and let $\CWL=\Hom(\C^{\times},T/Z_G)$ be the coweight lattice. We also have the canonical inclusion $\CRL\hookrightarrow \CWL$ given by the composition with the quotient map $T\rightarrow T/Z_G$. As $\Z$-modules, we have 
\begin{align*}
 \CRL = \bigoplus_{i\in I} \Z \alpha^{\vee}_i \quad \text{and} \quad \CWL =\bigoplus_{i\in I} \Z \varpi^{\vee}_i, 
\end{align*}
where $\alpha^{\vee}_1, \ldots, \alpha^{\vee}_{\rkg}$ are the simple coroots and $\varpi^{\vee}_1, \ldots, \varpi^{\vee}_{\rkg}$ are the fundamental coweights.
These bases satisfy
\begin{align*}
 \langle \alpha_i , \varpi^{\vee}_j \rangle = \delta_{ij}
 \quad \text{and} \quad
 \langle \varpi_i , \alpha^{\vee}_j  \rangle = \delta_{ij}
 \qquad (i,j\in I)
\end{align*}
under the dual parings $\RL\times \CWL \rightarrow \Z$ and $\WL\times \CRL \rightarrow \Z$. Here, we use the same symbol $\langle \ , \ \rangle$ for the both parings by abusing notation.

\vspace{30pt}

\section{Peterson variety and its Richardson strata} \label{sec Richardson strata}

For each $i\in I$, let $e_{i}\in \mathfrak{g}_{\alpha_i}$ and $f_{i}\in \mathfrak{g}_{-\alpha_i}$ be non-zero elements, and let $\nil\in\mathfrak{g}$ be the regular nilpotent element defined by 
\begin{align}\label{eq def of e}
\nil\coloneqq\sum_{i\in I} e_{i} \in \mathfrak{g}.
\end{align}
The \textbf{Peterson variety} $Y\subseteq G/B$ is defined to be 
\begin{align*}
Y \coloneqq \left\{gB \in G/B \ \left| \ \Ad_{g^{-1}}\nil \in \mathfrak{b} \oplus \bigoplus_{i\in I}\mathfrak{g}_{-\alpha_i}\right. \right\}.
\end{align*}
It is known that $\dim_{\C}Y=|I|=n$, the rank of $G$ (\cite{peterson1997quantum,precup2013affine}).
In this section, we study a stratification of $Y$ obtained by taking intersections with Richardson cells for the flag variety.  

\vspace{10pt}

\subsection{Richardson strata of $Y$}\label{subsec Richardson strata of Y}

For $w\in W$, let
\begin{align*}
 X_{w}^{\circ} \coloneqq BwB/B
 \quad \text{and} \quad
 \Omega_{w}^{\circ} \coloneqq B^-wB/B
\end{align*}
be the Schubert cell and the opposite Schubert cell associated to $w$, respectively.
For each subset $J\subseteq I$, we denote by $w_J$ the longest element of the parabolic subgroup $W_J\subseteq W$.
The following claim is well-known. For example, the proof of \cite[Lemma~3.5]{abe2024geometry} works almost verbatim for our setting as well.

\begin{lemma}
For $w\in W$, the following are equivalent;
\begin{itemize}
 \item[(1)] $X^{\circ}_w\cap Y\ne\emptyset$,
 \item[(2)] $\Omega^{\circ}_w\cap Y\ne\emptyset$,
 \item[(3)] $\dot{w}B\in Y$, i.e., $w=w_J$ for some $J\subseteq I$,
\end{itemize}
where $\dot{w}\in N_G(T)$ is a representative for $w\in W$.
\end{lemma}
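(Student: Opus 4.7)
The plan is to prove the implications cyclically. The implications $(3) \Rightarrow (1)$ and $(3) \Rightarrow (2)$ are immediate, since $\dot{w}B$ lies in both $X_w^{\circ}$ and $\Omega_w^{\circ}$ by the very definition of (opposite) Schubert cells.

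For $(1) \Rightarrow (3)$ and $(2) \Rightarrow (3)$, I would exploit the $\C^{\times}$-action on $G/B$ induced by the cocharacter $\rho^{\vee} \in \CWL$. Since $\langle \alpha_i, \rho^{\vee}\rangle = 1$ for every $i \in I$, a direct computation gives $\Ad_{\rho^{\vee}(t)^{-1}} \nil = t^{-1}\nil$, so the defining condition of $Y$ is preserved under left translation $gB \mapsto \rho^{\vee}(t) gB$; hence $Y$ is $\rho^{\vee}(\C^{\times})$-stable. Both cells $X_w^{\circ}$ and $\Omega_w^{\circ}$ are also $\rho^{\vee}(\C^{\times})$-stable (as they are $T$-invariant). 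The Bialynicki--Birula decomposition then shows that this action contracts $X_w^{\circ}$ onto its unique torus-fixed point $\dot{w}B$ as $t \to 0$ (conjugation by $\rho^{\vee}(t)$ scales each positive-root vector by a positive power of $t$), and analogously contracts $\Omega_w^{\circ}$ onto $\dot{w}B$ as $t \to \infty$. Since $Y$ is closed in $G/B$, these limits lie in $Y$, yielding $\dot{w}B \in Y$ in either case.

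Finally, for the characterization in $(3)$, I would observe that $\Ad_{\dot{w}^{-1}} e_i \in \mathfrak{g}_{w^{-1}\alpha_i}$, so
\begin{equation*}
\Ad_{\dot{w}^{-1}} \nil \;=\; \sum_{i \in I} \Ad_{\dot{w}^{-1}} e_i
\end{equation*}
lies in $\mathfrak{b} \oplus \bigoplus_{i \in I} \mathfrak{g}_{-\alpha_i}$ if and only if, for every $i \in I$, the root $w^{-1}\alpha_i$ is either positive or equal to $-\alpha_j$ for some $j \in I$. The remaining task is the Coxeter-theoretic fact that this condition is equivalent to $w = w_J$ for some $J \subseteq I$. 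The forward direction uses that $w_J$ (being an involution) sends each $\alpha_j$ with $j \in J$ to the negative of a simple root (because $-w_J$ permutes the simple roots of the root subsystem generated by $\{\alpha_j : j \in J\}$) and sends each $\alpha_i$ with $i \notin J$ to a positive root. For the converse I would set $J := \{i \in I : w^{-1}\alpha_i \in \{-\alpha_1, \ldots, -\alpha_n\}\}$ and deduce $w = w_J$ by a length-comparison argument. This combinatorial identification is the main subtlety, but it follows the same pattern as in \cite[Lemma~3.5]{abe2024geometry}.
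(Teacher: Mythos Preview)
Your argument is correct and follows the standard route; note that the paper itself does not supply a proof here but simply cites \cite[Lemma~3.5]{abe2024geometry}, so your sketch is in fact more detailed than what appears in the paper, while invoking the very same reference for the final Coxeter-theoretic step. One small technical remark: in the paper's conventions $\rho^{\vee}=\sum_i\varpi_i^{\vee}$ lies in $\CWL=\Hom(\C^{\times},T/Z_G)$ rather than in $\CRL=\Hom(\C^{\times},T)$, so strictly speaking $\rho^{\vee}(t)$ is an element of $T/Z_G$; this is harmless because $Z_G$ acts trivially on both $G/B$ and $\mathfrak{g}$, but you could equally well run the contraction with $2\rho^{\vee}=\sum_{\alpha>0}\alpha^{\vee}\in\CRL$ to avoid the issue entirely.
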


\vspace{10pt}

Recall that we have $\Omega^{\circ}_v\cap X^{\circ}_w\ne\emptyset$ if and only if $v\le w$. Recall also that $w_K\le w_J$ is equivalent to $K\subseteq J$ (\cite[Chap.~3, Exercise~5]{bjorner2005combinatorics}).
So we obtain the following.

\begin{corollary}\label{cor Richardson stratification of Y}
We have
\begin{align*}
 Y = \bigsqcup_{K\subseteq J\subseteq I} \RY{K}{J}, 
 \qquad \RY{K}{J}\coloneqq Y\cap \Omega_{w_K}^{\circ}\cap X_{w_J}^{\circ}.
\end{align*}
\end{corollary}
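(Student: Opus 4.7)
The plan is to derive this corollary by combining the Richardson decomposition of the flag variety with the previous lemma. Recall from standard Bruhat theory that the intersection $\Omega^{\circ}_v \cap X^{\circ}_w$ is nonempty exactly when $v \le w$ (in Bruhat order), and that the family $\{\Omega^{\circ}_v \cap X^{\circ}_w\}_{v \le w}$ forms a decomposition of $G/B$ into locally closed subvarieties (the Richardson cells). So I would begin by writing
\begin{align*}
 G/B = \bigsqcup_{v \le w} \Omega^{\circ}_v \cap X^{\circ}_w,
\end{align*}
and then intersect both sides with $Y$ to obtain
\begin{align*}
 Y = \bigsqcup_{v \le w} Y \cap \Omega^{\circ}_v \cap X^{\circ}_w.
\end{align*}
Disjointness on the right is inherited from the Richardson decomposition of $G/B$, so no further work is needed for that.

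Next I would use the previous lemma to identify which indices $(v,w)$ actually contribute. If $Y \cap \Omega^{\circ}_v \cap X^{\circ}_w \ne \emptyset$, then in particular $Y \cap X^{\circ}_w \ne \emptyset$ and $Y \cap \Omega^{\circ}_v \ne \emptyset$, so by the lemma there must exist $K, J \subseteq I$ with $v = w_K$ and $w = w_J$. Conversely, if $v = w_K$ and $w = w_J$, the Richardson cell condition $v \le w$ translates to $w_K \le w_J$, which by the cited exercise in \cite{bjorner2005combinatorics} is equivalent to $K \subseteq J$. Substituting these reparametrizations yields
\begin{align*}
 Y = \bigsqcup_{K \subseteq J \subseteq I} Y \cap \Omega^{\circ}_{w_K} \cap X^{\circ}_{w_J} = \bigsqcup_{K \subseteq J \subseteq I} \RY{K}{J},
\end{align*}
which is the claim.

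There is essentially no difficult step here: the corollary is a bookkeeping exercise, repackaging the Richardson decomposition under the bijection $J \mapsto w_J$ between subsets of $I$ and the set $\{w \in W : Y \cap X^{\circ}_w \ne \emptyset\}$ provided by the lemma. One mild subtlety worth noting is that the statement of the corollary allows empty strata in the disjoint union, so I would not need to show that each $\RY{K}{J}$ with $K \subseteq J$ is nonempty—only that the indexing set $\{(K,J) : K \subseteq J \subseteq I\}$ is large enough to cover every nonempty Richardson stratum of $Y$, which is precisely what the lemma guarantees.
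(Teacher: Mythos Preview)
Your argument is correct and matches the paper's approach exactly: the paper simply recalls that $\Omega^{\circ}_v\cap X^{\circ}_w\ne\emptyset$ iff $v\le w$ and that $w_K\le w_J$ iff $K\subseteq J$, then states the corollary without further proof. You have merely written out these steps in detail.
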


\vspace{10pt}

We call each $\RY{K}{J}$ a \textit{Richardson stratum} of $Y$.
The goal of this section is to give a concrete description of $\RY{K}{J}$.

We first study the intersection $Y\cap X_{w_J}^{\circ}$.
For $J\subseteq I$, let $L_J$ be the standard Levi subgroup of $G$ associated to $J\subseteq I$ which satisfies $T\subseteq L_J$.
Recall that the Schubert cell associated to the longest element $w_J$ of $W_J$ is given by
\begin{align}\label{eq Schubert cell for wJ 0}
 X_{w_J}^{\circ} 
 =B\dot{w}_JB/B
 = \{x\dot{w}_J B \in G/B \mid x \in U\cap L_J \} ,
\end{align}
where $\dot{w}_J\in N_G(T)$ is a representative of $w_J\in W$ and $U$ is the unipotent radical of $B$.
We set
\begin{align*}
 G_J \coloneqq (L_J,L_J) \subseteq L_J,
\end{align*}
where $(L_J,L_J)$ is the commutator subgroup of $L_J$.
Then $G_J$ is a semisimple algebraic group (\cite[Sect.~8.1.6]{springer1998linear}) of rank $|J|$.
Since $G$ is simply connected, so is $G_J$ (e.g.\ \cite[Corollary~9.5.11]{conrad2020reductive}).
Recalling that $L_J$ is the centralizer of a torus $(\cap_{j\in J} \Ker\alpha_j)^{\circ}$, the intersection $B'\coloneqq B\cap L_J$ is a Borel subgroup of $L_J$ (\cite[Corollary~22.4]{humphreys2012linear}). Hence,
\begin{align*}
 B_J \coloneqq B\cap G_J = B'\cap G_J
\end{align*}
is a Borel subgroup of $G_J$ since $G_J$ is a connected normal closed subgroup of $L_J$. 
In fact, the normality $G_J\trianglelefteq L_J$ ensures that the connected component $(B'\cap G_J)^{\circ}$ is a Borel subgroup of $G_J$, so it follows from \cite[Corollary~23.1A]{humphreys2012linear} that $B'\cap G_J$ is in fact connected.
Since we have $T\subseteq L_J$, it also follows that
\begin{align*}
 T_J \coloneqq T\cap G_J 
\end{align*}
is a maximal torus of $G_J$ (\cite[Sect.\ 27, Exercise~7]{humphreys2012linear}) contained in $B_J$.
Since we have $\alpha^{\vee}_j(\C^{\times})\subseteq G_{\{j\}}\subseteq G_J$ for $j\in J$ \cite[Sect.~7.3]{springer1998linear}, it follows that $T_J$ is generated by the one dimensional subtori $\alpha^{\vee}_j(\C^{\times})\subseteq T$ for $j\in J$.

The two reductive groups $G_J$ (with $T_J$) and $L_J$ (with $T$) have the same root system (\cite[Sect.~8.1.6]{springer1998linear}), and we also have $U\cap G_J \subseteq U\cap L_J$.
We know that the latter group $U\cap L_J$ is connected since it is the unipotent part of the Borel subgroup $B'=B\cap L_J$ (of $L_J$) appeared above (\cite[Theorem~19.3]{humphreys2012linear}). Thus, it follows that $U\cap G_J=U\cap L_J$ because of the dimension and connectedness.
So we set
\begin{align*}
 U_J\coloneqq U\cap G_J=U\cap L_J.
\end{align*}
Then, we have from \eqref{eq Schubert cell for wJ 0} that
\begin{align}\label{eq Schubert cell for wJ}
 X_{w_J}^{\circ} 
 = \{x\dot{w}_J B \in G/B \mid x \in U_J \}.
\end{align}
We set
\begin{align*}
 U^e \coloneqq U\cap Z_G(e),
\end{align*}
where $Z_G(e)$ denotes the centralizer of $e\in\mathfrak{g}$ in $G$.
For $J\subseteq I$, we also set
\begin{align}\label{eq def of UJeJ}
 (U_J)^{e_J} \coloneqq U_J\cap Z_{G_J}(e_J),
\end{align}
where $e_J\coloneqq \sum_{j\in J}e_j$.
The next claim is due to B$\breve{\text{a}}$libanu. She used semisimple algebraic groups of adjoint type, but her proof works verbatim for simply connected case as well.

\begin{lemma}$($\cite[Proposition~6.3]{Bualibanu2017Peterson}$)$\label{lem Y cap X circ}
For $J\subseteq I$, we have
\begin{align*}
 Y\cap X_{w_J}^{\circ}
 = \left\{ x\dot{w}_J B\in G/B \mid x\in (U_J)^{e_J} \right\},
\end{align*}
where $\dot{w}_J\in N_G(T)$ is a representative of $w_J\in W$.
\end{lemma}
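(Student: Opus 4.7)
The plan is to reduce the lemma to the classical Kostant--Peterson description of the Peterson variety inside the open Bruhat cell, applied to the smaller semisimple group $G_J$. By \eqref{eq Schubert cell for wJ}, a general element of $X_{w_J}^{\circ}$ has the form $gB$ with $g = x\dot{w}_J$ for some $x \in U_J$. I would write the regular nilpotent as $\nil = e_J + e^J$ where $e_J = \sum_{j \in J} e_j$ and $e^J = \sum_{i \in I \setminus J} e_i$, so that the Peterson condition $\Ad_{g^{-1}}\nil \in \mathfrak{b} \oplus \bigoplus_{i \in I} \mathfrak{g}_{-\alpha_i}$ splits additively into conditions on $\Ad_{g^{-1}} e_J$ and $\Ad_{g^{-1}} e^J$. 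The whole strategy rests on the assertion that the $e^J$-contribution automatically lies in $\mathfrak{b}$, so that only the $e_J$-contribution is constraining.

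To see this, recall that $U_J$ is generated by the root subgroups $U_{\beta}$ for positive roots $\beta$ in the root subsystem of $G_J$. Hence for each $i \in I \setminus J$, the element $\Ad_{x^{-1}} e_i$ is a sum of vectors in $\mathfrak{g}_{\alpha_i}$ and in root spaces $\mathfrak{g}_{\alpha_i + \beta}$ with $\beta$ a positive root of $G_J$. Each such $\alpha_i + \beta$ is a positive root of $G$ lying outside the root subsystem of $G_J$, since it retains the nonzero coefficient of $\alpha_i$. It is standard that $W_J$ preserves the set of positive roots of $G$ lying outside that subsystem---a quick proof applies simple reflections $s_j$ with $j \in J$, which only modify such a root by a multiple of $\alpha_j$ and thus preserve positivity since the coefficient of $\alpha_i$ is unchanged. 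Consequently, $\Ad_{\dot{w}_J^{-1}}$ sends each $\mathfrak{g}_{\alpha_i + \beta}$ above into a positive root space of $\mathfrak{g}$, yielding $\Ad_{g^{-1}} e^J \in \mathfrak{u} \subset \mathfrak{b}$ as needed.

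It follows that $gB \in Y$ if and only if $\Ad_{g^{-1}} e_J \in \mathfrak{b} \oplus \bigoplus_{i \in I} \mathfrak{g}_{-\alpha_i}$. But $\Ad_{g^{-1}} e_J$ lies in the Lie algebra of $G_J$, because $x, \dot{w}_J \in G_J$ and $e_J \in \Lie(G_J)$, so intersecting the target with $\Lie(G_J)$ gives the equivalent condition $\Ad_{g^{-1}} e_J \in \Lie(B_J) \oplus \bigoplus_{j \in J} \mathfrak{g}_{-\alpha_j}$. This is precisely the defining condition for $x \dot{w}_J B_J$ to lie in the Peterson variety of the simply connected semisimple group $G_J$, whose regular nilpotent element is $e_J$ and whose longest Weyl element is $w_J$ itself.

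At this point I would invoke the classical Kostant--Peterson description of the Peterson variety inside the top-dimensional Schubert cell, applied to $G_J$: the intersection of that Peterson variety with $U_J \dot{w}_J B_J/B_J$ consists precisely of the cosets $x \dot{w}_J B_J$ with $x \in U_J \cap Z_{G_J}(e_J) = (U_J)^{e_J}$. Combined with the reduction above, this yields the stated description of $Y \cap X_{w_J}^{\circ}$. The main obstacle in carrying out this argument rigorously is the root-space bookkeeping in the second paragraph; once one confirms that $\Ad_{x^{-1}} e^J$ is supported on root spaces of positive roots lying outside the $G_J$-subsystem and invokes the $W_J$-invariance of that set, the remainder is a clean reduction to Kostant's theorem for $G_J$.
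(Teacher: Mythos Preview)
Your argument is correct. The paper does not give its own proof of this lemma; it simply cites B\u{a}libanu \cite[Proposition~6.3]{Bualibanu2017Peterson} and remarks that her proof (written for adjoint groups) works verbatim in the simply connected case. Your reduction to the Peterson variety of $G_J$ via the splitting $e = e_J + e^J$ is precisely the natural line of attack, and indeed the paper uses exactly the same parabolic/Levi decomposition argument later in Lemma~\ref{lemm ad part for xwJB-} when computing $\qp{i}(x\dot{w}_J)$.

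Two small points worth tightening. First, the lemma is stated for an \emph{arbitrary} representative $\dot{w}_J\in N_G(T)$, whereas your argument uses $\dot{w}_J\in G_J$. This is harmless: any two representatives differ by an element of $T\subseteq B$, so both sides of the claimed equality are independent of the choice, and one may take $\dot{w}_J\in N_{G_J}(T_J)\subseteq G_J$ without loss of generality. Second, what you call ``the classical Kostant--Peterson description'' for the big cell of $G_J$ is really the trivial case $J=I$ of the lemma itself: for $u\in U$ one has $\Ad_{(u\dot{w}_0)^{-1}}e = \Ad_{\dot{w}_0^{-1}}(\Ad_{u^{-1}}e)\in\mathfrak{u}^-$, and this lies in $\bigoplus_i\mathfrak{g}_{-\alpha_i}$ if and only if $\Ad_{u^{-1}}e$ has no components of height $\geq 2$, i.e.\ $\Ad_{u^{-1}}e=e$, i.e.\ $u\in U^e$. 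It would strengthen the write-up to include this one-line verification rather than appeal to an unnamed classical result.
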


\vspace{10pt}

We next study the intersection $Y\cap \Omega_{w_K}^{\circ}$ for $K\subseteq I$.
Let $i\in I$, and let $V_{\varpi_i}$ be the irreducible representation of $G$ with a highest weight vector $v_{\varpi_i}\in V_{\varpi_i}$ of weight $\varpi_i$.
We consider a function
\begin{align}\label{eq def of Delta}
 \Delta_{\varpi_i} \colon G \rightarrow \C
 \quad ; \quad
 g \mapsto (gv_{\varpi_i})_{\varpi_i} ,
\end{align} 
where $(gv_{\varpi_i})_{\varpi_i}$ denotes 
the coefficient of $v_{\varpi_i}$ for the weight decomposition of $gv_{\varpi_i}$ in $V_{\varpi_i}$.
Note that the definition of $\Delta_{\varpi_i}$ does not depend on the choice of the highest weight vector $v_{\varpi_i}$.
Denoting the closure of $\Omega_{w}^{\circ}$ by $\Omega_{w}$ for $w\in W$, it is well-known that the Schubert divisor $\Omega_{s_i}$ is given by the zero locus of $\Delta_{\varpi_i}$ (e.g.\ \cite[Lemma~5.7]{abe2023peterson}) :
\begin{align}\label{eq Omega si by equation}
 \Omega_{s_i} = \{ gB \in G/B \mid \Delta_{\varpi_i}(g)=0 \}.
\end{align}

\vspace{5pt}

\begin{lemma}\label{lem Y cap Omega circ}
For $K\subseteq I$, we have
\begin{align*}
 Y\cap\Omega_{w_K}^{\circ}
 = \left\{gB\in Y \ \left| \ 
 \begin{cases}
 \Delta_{\varpi_i}(g)=0 \quad \text{if $i\in K$}, \\
 \Delta_{\varpi_i}(g)\ne0 \quad \text{if $i\in I-K$}
 \end{cases}
 \right.
 \right\}.
\end{align*}
\end{lemma}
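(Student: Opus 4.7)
The plan is to reduce the lemma to two ingredients already in place: the set-theoretic description $\Omega_{s_i} = \{gB \in G/B \mid \Delta_{\varpi_i}(g) = 0\}$ recalled in \eqref{eq Omega si by equation}, and the Bruhat decomposition $\Omega_{s_i} = \bigsqcup_{v \geq s_i} \Omega^\circ_v$ of this closed subvariety into opposite Schubert cells. Combining these with the equivalence $s_i \leq w_K \iff i \in K$ (the singleton case of the criterion $w_K \leq w_J \iff K \subseteq J$ invoked just before Corollary \ref{cor Richardson stratification of Y}) yields the key dichotomy: for any $w \in W$, the opposite Schubert cell $\Omega^\circ_w$ is either contained in $\Omega_{s_i}$ (iff $s_i \leq w$) or disjoint from it, and for $w = w_K$ the two alternatives correspond exactly to $i \in K$ and $i \notin K$.

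Given this dichotomy, the forward implication becomes immediate. For any $gB \in Y \cap \Omega^\circ_{w_K}$, when $i \in K$ the inclusion $\Omega^\circ_{w_K} \subseteq \Omega_{s_i}$ together with \eqref{eq Omega si by equation} forces $\Delta_{\varpi_i}(g) = 0$, while for $i \notin K$ the disjointness $\Omega^\circ_{w_K} \cap \Omega_{s_i} = \emptyset$ forces $\Delta_{\varpi_i}(g) \neq 0$. For the reverse implication I would take $gB \in Y$ with the stated vanishing pattern, invoke Corollary \ref{cor Richardson stratification of Y} to locate $gB$ in a unique Richardson stratum $\RY{K'}{J'}$ (so in particular $gB \in \Omega^\circ_{w_{K'}}$), and then apply the already-proved forward implication with $K$ replaced by $K'$ to deduce that $\Delta_{\varpi_i}(g) = 0$ holds precisely when $i \in K'$; comparing with the hypothesis yields $K = K'$, whence $gB \in Y \cap \Omega^\circ_{w_K}$.

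I do not foresee a serious obstacle here: the argument is essentially bookkeeping between \eqref{eq Omega si by equation} and the cell decomposition of $\Omega_{s_i}$. The only minor point to mind is to treat $\Omega_{s_i}$ as a set-theoretic closed subvariety, so that its Bruhat stratification applies directly, rather than as a divisor with multiplicity.
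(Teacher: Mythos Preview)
Your proposal is correct and follows essentially the same approach as the paper: both rely on the dichotomy that $\Omega^{\circ}_{w_K}$ is contained in $\Omega_{s_i}$ when $i\in K$ and disjoint from it when $i\notin K$, combined with \eqref{eq Omega si by equation}. The only cosmetic difference is that the paper observes upfront that both sides partition $Y$ (so proving one inclusion for all $K$ suffices), whereas you prove the reverse inclusion by locating the point in some $Y\cap\Omega^{\circ}_{w_{K'}}$ and comparing vanishing patterns; these are logically equivalent arguments.
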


\begin{proof}
Let us denote the right hand side by $C_K$. 
We then have two decompositions:
\begin{align*}
 Y = \bigsqcup_{K\subseteq I} Y\cap\Omega_{w_K}^{\circ} 
 \quad \text{and} \quad
 Y = \bigsqcup_{K\subseteq I} C_K .
\end{align*}
Therefore, to show that $Y\cap\Omega_{w_K}^{\circ}=C_K$, it suffices to prove that $Y\cap\Omega_{w_K}^{\circ}\subseteq C_K$ for all $K\subseteq I$.
To this end, we have
\begin{align*}
 Y\cap\Omega_{w_K}^{\circ}
 \subseteq Y\cap\Omega_{w_K}
 =\bigsqcup_{K\subseteq L} Y\cap\Omega_{w_L}^{\circ}
 \subseteq \bigcap_{i\in K} Y\cap\Omega_{s_i} ,
\end{align*}
where the last inclusion follows since each $L$ satisfies $s_i\le w_L$ for all $i\in K(\subseteq L)$.
This means that
\begin{align}\label{eq YcapOmegaK in Deltai=0}
 Y\cap\Omega_{w_K}^{\circ}
 \subseteq \{gB\in Y \mid \Delta_{\varpi_i}(g)=0 \quad \text{for $i\in K$}\}
\end{align}
by \eqref{eq Omega si by equation}. Moreover, for each $i\in I-K$, we have $s_i\not\le w_K$ (since $i\notin K$) so that $(Y\cap\Omega_{s_i}) \cap (Y\cap\Omega_{w_K}^{\circ}) =\emptyset$.
This implies that
\begin{align*}
 Y\cap\Omega_{w_K}^{\circ}
 \subseteq Y - \bigsqcup_{i\in I-K} Y\cap\Omega_{s_i}.
\end{align*}
Hence, we obtain that
\begin{align*}
 Y\cap\Omega_{w_K}^{\circ}
 \subseteq \{gB\in Y \mid \Delta_{\varpi_i}(g)\ne 0 \quad \text{for $i\in I-K$}\}.
\end{align*}
This and \eqref{eq YcapOmegaK in Deltai=0} imply that $Y\cap\Omega_{w_K}^{\circ} \subseteq C_K$ by the definition of $C_K$.
This completes the proof.
\end{proof}

\vspace{10pt}

For $K\subseteq J\subseteq I$, recall that we have $\RY{K}{J} = Y\cap\Omega_{w_K}^{\circ} \cap X_{w_J}^{\circ}$ (Corollary~\ref{cor Richardson stratification of Y}).
Using Lemma~\ref{lem Y cap X circ} and Lemma~\ref{lem Y cap Omega circ}, we obtain the following description of Richardson strata.

\begin{proposition}\label{prop description of YKJ}
For $K\subseteq J\subseteq I$, we have
\begin{align*}
 \RY{K}{J}
 = \left\{ x\dot{w}_J B\in G/B \ \left| \ x \in (U_J)^{e_J} \text{ and }
 \begin{cases}
 \Delta_{\varpi_i}(x\dot{w}_J)=0 \quad \text{if $i\in K$}, \\
 \Delta_{\varpi_i}(x\dot{w}_J)\ne0 \quad \text{if $i\in I-K$}
 \end{cases}
 \right.
 \right\},
\end{align*}
where $(U_J)^{e_J}=U_J\cap Z_{G_J}(e_J)$.
\end{proposition}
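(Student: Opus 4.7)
The proof is essentially a bookkeeping exercise that combines the two preceding lemmas in the most direct way, so the plan is more about confirming that the combination is clean than about overcoming any real obstacle.

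The strategy is to start from the identity $\RY{K}{J} = Y\cap \Omega_{w_K}^{\circ} \cap X_{w_J}^{\circ}$ given by Corollary \ref{cor Richardson stratification of Y}, and intersect first with $X_{w_J}^{\circ}$, then with $\Omega_{w_K}^{\circ}$. By Lemma \ref{lem Y cap X circ}, we have
\begin{align*}
 Y\cap X_{w_J}^{\circ} = \{x\dot{w}_J B \mid x \in (U_J)^{e_J}\},
\end{align*}
so every point of $\RY{K}{J}$ is automatically of the form $x\dot{w}_J B$ with $x \in (U_J)^{e_J}$. It then only remains to characterize which such points lie in $\Omega_{w_K}^{\circ}$, and this is exactly the content of Lemma \ref{lem Y cap Omega circ} applied to the representative $g = x\dot{w}_J$: the defining conditions become $\Delta_{\varpi_i}(x\dot{w}_J)=0$ for $i\in K$ and $\Delta_{\varpi_i}(x\dot{w}_J)\ne 0$ for $i\in I-K$. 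Putting these two steps together yields the claimed description.

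The only point that deserves a brief remark is well-definedness: the conditions $\Delta_{\varpi_i}(g)=0$ and $\Delta_{\varpi_i}(g)\ne 0$ are conditions on the coset $gB\in G/B$ rather than on $g$ itself, since for $b\in B$ the highest weight vector property gives $bv_{\varpi_i} = \varpi_i(b)v_{\varpi_i}$ up to $U$-contributions to lower weight spaces, so $\Delta_{\varpi_i}(gb) = \varpi_i(b)\,\Delta_{\varpi_i}(g)$ with $\varpi_i(b)\in\C^{\times}$. Thus substituting the representative $x\dot{w}_J$ for $g$ in Lemma \ref{lem Y cap Omega circ} is unambiguous, and no further argument is needed.

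Since both ingredient lemmas are already established, there is no genuine obstacle here; the only thing to be careful about is to combine the two descriptions without introducing a spurious representative-dependence. The proof itself should fit comfortably in a few lines.
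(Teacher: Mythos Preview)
Your proposal is correct and matches the paper's approach exactly: the paper does not even write out a proof of this proposition, simply stating that the description follows by combining Lemma~\ref{lem Y cap X circ} and Lemma~\ref{lem Y cap Omega circ}. Your remark on representative-independence is a helpful sanity check but is already implicit in \eqref{eq Omega si by equation} and the statement of Lemma~\ref{lem Y cap Omega circ}.
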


\vspace{10pt}

\subsection{The functions $\Delta_{\varpi_i}$ on $G_J$}

For later use, let us mention some properties of our functions $\Delta_{\varpi_i}$ on the subgroup $G_J$ for $J\subseteq I$.

Let $\varpi'_i\coloneqq \varpi_i|_{T_J}$ for $i\in J$.
Then these form the set of fundamental weights of $T_J$ (with respect to the set of simple roots $\alpha_i$ for $i\in J$) since $T_J$ is the torus generated by $\alpha_j^{\vee}(\C^{\times})$ for $j\in J$. 
Denote by $V_{\varpi'_i}$ the fundamental representation of $G_J$ having the highest weight $\varpi'_i$.
Let $v'_i$ be a highest weight vector in $V_{\varpi'_i}$, and define a function $\Delta_{\varpi'_i}$ on $G_J$ by the same manner as \eqref{eq def of Delta}.
Namely, we set
\begin{align}\label{eq def of Delta'}
 \Delta_{\varpi'_i} \colon G_J \rightarrow \C
 \quad ; \quad 
 g\mapsto (gv'_i)_{\varpi'_i} ,
\end{align}
where $(gv'_i)_{\varpi'_i}$ denotes  the coefficient of $v'_i$ for the weight decomposition of $gv'_i$ in $V_{\varpi'_i}$.
The next claim seems to be well-known, but we give a proof for the reader's convenience.

\begin{proposition}\label{prop restriction of Delta}
For $J\subseteq I$, the following hold:
\begin{itemize}
 \item[$\text{\rm (i)}$] If $i\in J$, then we have $\Delta_{\varpi_i}|_{G_J}=\Delta_{\varpi'_i}$.
 \item[$\text{\rm (ii)}$] If $i\in I-J$, then $\Delta_{\varpi_i}|_{G_J}\equiv1$.
\end{itemize}
\end{proposition}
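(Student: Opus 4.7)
The plan is to analyze $V_{\varpi_i}|_{G_J}$ and locate the $G_J$-orbit of $v_{\varpi_i}$ inside it. Both parts hinge on one observation: since $v_{\varpi_i}$ is $U$-fixed and $U_J\subseteq U$, it is $U_J$-fixed; and its $T_J$-weight is $\varpi_i|_{T_J}$, which on the generator $\alpha_j^{\vee}(\C^{\times})$ of $T_J$ for $j\in J$ is given by $\langle\varpi_i,\alpha_j^{\vee}\rangle=\delta_{ij}$.

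For part (ii), when $i\in I-J$ we get $\varpi_i|_{T_J}=0$, so $v_{\varpi_i}$ is $B_J$-fixed of weight zero. Since $G_J$ is connected reductive, $V_{\varpi_i}|_{G_J}$ decomposes as a direct sum of irreducibles; in any nontrivial irreducible $G_J$-module the $U_J$-fixed vectors are exactly the highest weight vectors, which carry a nonzero $T_J$-weight. Hence $v_{\varpi_i}$ lies in the trivial isotypic component, so $gv_{\varpi_i}=v_{\varpi_i}$ for all $g\in G_J$, and $\Delta_{\varpi_i}(g)=1$.

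For part (i), when $i\in J$ the weight $\varpi'_i=\varpi_i|_{T_J}$ is the $i$-th fundamental weight of $G_J$. The same reductivity argument shows that $v_{\varpi_i}$ lies in the $V_{\varpi'_i}$-isotypic component, and the $G_J$-submodule $V'\subseteq V_{\varpi_i}$ generated by $v_{\varpi_i}$ is an irreducible copy of $V_{\varpi'_i}$ with $v_{\varpi_i}$ itself as a highest weight vector. Since $v'_i$ is only defined up to a nonzero scalar, we normalize by choosing an isomorphism $V'\cong V_{\varpi'_i}$ sending $v_{\varpi_i}$ to $v'_i$.

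It then remains to compare the $T$-weight $\varpi_i$ coefficient of $gv_{\varpi_i}\in V_{\varpi_i}$ with the $T_J$-weight $\varpi'_i$ coefficient of $gv_{\varpi_i}\in V'$. The crucial observation is that $gv_{\varpi_i}\in V'$, and that within this subspace the $T_J$-weight $\varpi'_i$ line and the $T$-weight $\varpi_i$ line of $V_{\varpi_i}$ both coincide with $\C v_{\varpi_i}$: the former because $V'\cong V_{\varpi'_i}$ is irreducible with one-dimensional $\varpi'_i$-weight space, and the latter because $\C v_{\varpi_i}$ is already the full $\varpi_i$-weight space of $V_{\varpi_i}$ and lies in $V'$. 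Hence the two projections of $gv_{\varpi_i}$ land on the same line and extract the same scalar, giving $\Delta_{\varpi_i}(g)=\Delta_{\varpi'_i}(g)$. I expect this last weight-space matching to be the only delicate bookkeeping step rather than a serious obstacle.
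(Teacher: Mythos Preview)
Your proposal is correct. For part (i) your argument is essentially the paper's: both identify the $G_J$-submodule generated by $v_{\varpi_i}$ as an irreducible copy of $V_{\varpi'_i}$ and read off the equality of the two coefficients. You are more explicit than the paper about why the $T$-weight-$\varpi_i$ projection inside $V_{\varpi_i}$ agrees with the $T_J$-weight-$\varpi'_i$ projection inside $V'$; the paper simply asserts this, so your extra bookkeeping is a genuine (if minor) improvement in exposition. The key point making it work is that any $T_J$-weight-$\mu'$ vector of $V'$ with $\mu'\neq\varpi'_i$ lies in a sum of $T$-weight spaces of $V_{\varpi_i}$ whose restrictions to $T_J$ are $\mu'\neq\varpi'_i$, hence have $T$-weight $\neq\varpi_i$.

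For part (ii) you take a genuinely different route. The paper argues directly that each generator $U_{\pm\alpha_j}$ ($j\in J$) fixes $v_{\varpi_i}$, handling $U_{-\alpha_j}$ via $\dot s_j$-conjugation and the weight calculation $s_j\varpi_i=\varpi_i$. You instead use complete reducibility of $V_{\varpi_i}|_{G_J}$ and observe that a $U_J$-fixed vector of $T_J$-weight zero must lie in the trivial isotypic component. Your argument is more conceptual and avoids the explicit $\dot s_j$ manipulation; the paper's is more hands-on and self-contained. Note that your (ii) could be shortened further by reusing the mechanism of (i): the $G_J$-submodule generated by $v_{\varpi_i}$ is irreducible with highest weight $\varpi_i|_{T_J}=0$, hence trivial.
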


\begin{proof}
We first prove (i).
Recall that $V_{\varpi_i}$ is an irreducible $G$-module, and let $v_{\varpi_i}\in V_{\varpi_i}$ be a highest weight vector of weight $\varpi_i$.
Let us regard $V_{\varpi_i}$ as a $G_J$-module via the inclusion $G_J\hookrightarrow G$.
It is clear that $U_J$ fixes $v_{\varpi_i}$.
Let 
\begin{align*}
 V\coloneqq \text{span}_{\C} (G_J v_{\varpi_i}) \subseteq V_{\varpi_i}
\end{align*}
which is a $G_J$-submodule of $V_{\varpi_i}$.
Since $G_J$ is semisimple, $V$ is completely reducible. Hence, \cite[Proposition~31.2]{humphreys2012linear} implies that $V$ is in fact an irreducible $G_J$-module.
Since we have $v_{\varpi_i}\in V$ (which is a highest weight vector of weight $\varpi'_i$ for $T_J$), it follows that $V$ is the irreducible $G_J$-module with highest weight $\varpi'_i$. In particular, we have $V\cong V_{\varpi'_i}$ as $G_J$-modules.
Namely, we found $V_{\varpi'_i}$ in $V_{\varpi_i}$ :
\begin{align*}
 v_{\varpi_i} \in V_{\varpi'_i} \subseteq V_{\varpi_i}.
\end{align*}
Since the definition of $\Delta_{\varpi'_i}$ does not depend on the choice of a highest weight vector, this implies that $\Delta_{\varpi_i}(g)=\Delta_{\varpi'_i}(g)$ for all $g\in G_J$.

For (ii), notice that $G_J$ is generated by $U_{\pm\alpha_j}$ for $j\in J$ since $G_J$ is semisimple (\cite[Theorem~27.5~(e)]{humphreys2012linear}).
So it suffices to prove that 
\begin{align}\label{eq product 105}
 U_{\pm\alpha_j}v_{\varpi_i}=v_{\varpi_i} \qquad (j\in J).
\end{align}
The claim for $U_{\alpha_j}$ is obvious since we have $U_{\alpha_j}\subseteq U$ and $v_{\varpi_i}$ is a highest weight vector.
So we prove the claim for $U_{-\alpha_j}$.
To begin with, note that we have
\begin{align*}
 U_{-\alpha_j} = \dot{s}_{j}^{-1} U_{\alpha_j} \dot{s}_{j} ,
\end{align*}
and the $U_{\alpha_j}$ in the right hand side acts trivially on $v_{\varpi_i}$ as we saw above.
We want to compute $U_{-\alpha_j} v_{\varpi_i} = \dot{s}_{j}^{-1} U_{\alpha_j}\dot{s}_{j}v_{\varpi_i}$. Here, $\dot{s}_{j}v_{\varpi_i}$ is an eigenvector for $T$ since $\dot{s}_{j}\in N_G(T)$, and its weight is given by
\begin{align}\label{eq product 115}
 s_{j} \varpi_i = \varpi_i - \langle \varpi_i , \alpha^{\vee}_j \rangle \alpha_{j} =  \varpi_i ,
\end{align}
where the last equality holds since $j\ne i$ (due to $J\cap(I-J)=\emptyset$).
Therefore, we have $\dot{s}_{j}v_{\varpi_i}=\lambda v_{\varpi_i}$ for some $\lambda\in\C^{\times}$.
Hence, we obtain that
\begin{align*}
 U_{-\alpha_j} v_{\varpi_i} 
 = \dot{s}_{j}^{-1} U_{\alpha_j} \dot{s}_{j} v_{\varpi_i}
 = \dot{s}_{j}^{-1} U_{\alpha_j} \lambda v_{\varpi_i}
 = \dot{s}_{j}^{-1} \lambda v_{\varpi_i}
 = v_{\varpi_i}.
\end{align*}
Thus, we proved \eqref{eq product 105} which completes the proof.
\end{proof}

\vspace{10pt}

\begin{remark}\label{rem on Delta being 1}
{\rm
In the description of $\RY{K}{J}$ given in Proposition~\ref{prop description of YKJ}, if we choose the representative $\dot{w}_J$ appearing there to lie in $G_J$, then we can impose additional conditions $\Delta_{\varpi_i}(x\dot{w}_J)=1$ for $i\in I-J$ by Proposition~\ref{prop restriction of Delta}~(ii).
This will be naturally achieved in the next section since we will choose specific representatives $\dot{w}$ for $w\in W$ there.
}
\end{remark}

\vspace{30pt}

\section{Totally nonnegative part of Peterson variety}\label{sec Totally nonnegative Richardson strata}

In this section, we study the totally nonnegative part of the Peterson variety.
For that purpose, we begin with fixing a pinning for $G$.

\subsection{Pinning for $G$}\label{subsec pinning}

Recall that $\alpha_i, \varpi_i\in\Hom(T,\C^{\times})$ for $i\in I$ are the simple roots and the fundamental weights, and that $\alpha^{\vee}_i, \varpi^{\vee}_i\in\Hom(\C^{\times},T/Z_G)$ for $i\in I$ are the simple coroots and the fundamental coweights (Section~\ref{subsec set up}).
If there is no confusion, we use the same symbols $\alpha_i$, $\varpi_i$, $\alpha^{\vee}_i$, $\varpi^{\vee}_i$ for their derivatives at the identity: 
\begin{align}\label{eq differential of roots and coroots}
\alpha_i\colon \mathfrak{t}\rightarrow \C, 
\quad
\varpi_i\colon \mathfrak{t}\rightarrow \C, 
\quad
 \alpha^{\vee}_i\colon \C\rightarrow \mathfrak{t}, 
\quad
 \varpi^{\vee}_i\colon  \C\rightarrow \mathfrak{t}
\end{align}
to simplify the notation.
With this understanding, we set
\begin{align*}
 h_i\coloneqq \alpha^{\vee}_i(1)\in\mathfrak{t}
\end{align*}
so that $\alpha_i(h_i)=2$. 
We now take $(e_i, f_i)_{i\in I}$ to be a set of Chevalley generators of $\mathfrak{g}$. Namely, we have $e_i\in\mathfrak{g}_{\alpha_i}$, $f_i\in\mathfrak{g}_{-\alpha_i}$ for $i\in I$, and 
\begin{align*}
 [e_i,f_i] = h_i 
\end{align*}
so that $\{e_i,h_i,f_i\}$ forms an $\mathfrak{sl}_{2}(\C)$-triple.
We fix them for the rest of this paper.

Now let $i\in I$. Using the Chevalley generators chosen above, we define parametrizations $x_i \colon \C^{\times} \rightarrow U_{\alpha_i}$ and $y_i \colon \C^{\times} \rightarrow U_{-\alpha_i}$ by 
\begin{align*}
 x_i(t) = \exp(te_i) \quad \text{and} \quad y_i(t)=\exp(tf_i)
\end{align*}
for all $t\in \C^{\times}$.
We also set
\begin{align}\label{eq choice of representative 1}
 \dot{s}_i \coloneqq y_i(1)x_i(-1)y_i(1) = x_i(-1)y_i(1)x_i(-1) ,
\end{align}
where the second equality follows from \cite[Lemma~8.1.4~(ii)]{springer1998linear}).
For a reduced expression $w=s_{i_1}\cdots s_{i_k}$, we set
\begin{align}\label{eq choice of representative 2}
 \dot{w}\coloneqq \dot{s}_{i_1}\cdots \dot{s}_{i_k} \in N_G(T).
\end{align}
This definition does not depend on the choice of a reduced expression $w=s_{i_1}\cdots s_{i_k}$ since $\{\dot{s}_{i}\}_{i\in I}$ satisfies the same braid relations as $W$ (\cite[Proposition~9.3.2]{springer1998linear}).
For the rest of this paper, we always take this representative for $w\in W$, unless otherwise specified.

\vspace{10pt}

\subsection{Totally nonnegative parts $G_{\ge0}$ and $(G/B)_{\ge0}$}\label{TNN G and G/B}

In this section, let us review the definitions of the totally nonnegative parts $G_{\ge0}$ and $(G/B)_{\ge0}$ from \cite{lusztig1994total} which is our main reference.

Let $U_{\ge0}$ be the submonoid of $U$ generated by $x_i(a)$ for $i\in I$ and $a\in\R_{\ge0}$.
Similarly, $U^-_{\ge0}$ is defined to be the submonoid of $U^-$ generated by $y_i(a)$ for $i\in I$ and $a\in\R_{\ge0}$.
Let $T_{>0}$ be the subgroup of $T$ generated by $\chi(t)$ for $\chi\in\Hom(\C^{\times},T)$ and $t\in\R_{>0}$.
Now, the \textit{totally nonnegative part} $G_{\ge0}$ is defined to be the submonoid of $G$ generated by $U_{\ge0}$, $T_{>0}$, and $U^-_{\ge0}$.

Let $w\in W$, and suppose that $w=s_{i_1}\cdots s_{i_m}$ is a reduced expression. 
Then the set
\begin{align*}
 U(w) \coloneqq \{ x_{i_1}(a_1)\cdots x_{i_m}(a_m) \in U \mid a_1,\ldots,a_m\in \R_{>0} \}
\end{align*}
does not depend on the choice of the reduced expression $w=s_{i_1}\cdots s_{i_m}$.
This gives us a partition of $U_{\ge0}$ (\cite[Corollary~2.8]{lusztig1994total}):
\begin{align*}
 U_{\ge0} = \bigsqcup_{w\in W} U(w)  .
\end{align*}
We now define 
\begin{align*}
 U_{>0}\coloneqq U(w_0) ,
\end{align*}
where $w_0$ is the longest element of $W$.
We also set $U^-_{>0}$ in a similar manner.
We now define $G_{>0}$ to be the submonoid (without $1$) of $G$ generated by $U_{>0}$, $T_{>0}$, and $U^-_{>0}$. We note that we have $\overline{U_{>0}}=U_{\ge0}$ and $\overline{G_{>0}}=G_{\ge0}$ by taking the closure with respect to the analytic topology (\cite[Proposition~4.2 and Remark~4.4]{lusztig1994total}). 
 
Let 
\begin{align*}
 (G/B)_{>0} \coloneqq \{ gB \in G/B \mid g\in G_{>0} \}=\{ uB \in G/B \mid u\in U^-_{>0} \},
\end{align*}
and set $(G/B)_{\ge0}\coloneqq \overline{(G/B)_{>0}}$.
We also define $U^-(w)$ $(w\in W)$, $(G/B^-)_{>0}$, and $(G/B^-)_{\ge0}$ in the same manner by replacing the roles of $U$ and $U^-$. 
The quotient map $G\rightarrow G/B$ restricts to $G_{>0}\rightarrow (G/B)_{>0}$ as well as $G_{\ge0}\rightarrow (G/B)_{\ge0}$.

The properties in the following lemma seem to be known for experts.

\begin{lemma}\label{lem two facts for TNN}
The following hold:
\begin{itemize}
 \item[$(1)$] $U_{>0}\cdot(G/B)_{\ge 0}\subseteq (G/B)_{\ge 0}\cap (B^-eB/B)$, \vspace{5pt}
\vspace{5pt}\item[$(2)$] $U^-(w)B/B = (G/B)_{\ge0}\cap (B^-eB/B)\cap (B\dot{w}B/B)$.
\end{itemize}
\end{lemma}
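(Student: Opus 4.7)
The two claims are proved by combining Lusztig's cell decomposition of $U^-_{\ge 0}$ with Bruhat geometry.

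For (2), the first step is to identify $(G/B)_{\ge 0}\cap (B^-eB/B)$ with $U^-_{\ge 0}B/B$. The open immersion $U^-\hookrightarrow G/B$, $u\mapsto uB$, is a homeomorphism onto $B^-eB/B$; restricted to positive parts, it gives $U^-_{>0}\leftrightarrow (G/B)_{>0}$, and taking closures inside the open cell $B^-eB/B$ yields $(G/B)_{\ge 0}\cap (B^-eB/B)=U^-_{\ge 0}B/B$. Lusztig's cellular partition $U^-_{\ge 0}=\bigsqcup_{w\in W}U^-(w)$ then decomposes this intersection into pieces $U^-(w)B/B$. The standard inclusion $U^-(w)B/B\subseteq X^\circ_w=B\dot wB/B$ (which holds because a product $y_{i_1}(a_1)\cdots y_{i_k}(a_k)B$ along a reduced word of $w$ with $a_\ell>0$ represents a point of the Schubert cell of $w$) implies that intersecting further with $B\dot wB/B$ singles out exactly $U^-(w)B/B$, since the other cells $U^-(w')B/B$ with $w'\ne w$ sit in disjoint Schubert cells.

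For (1), I would split the claim into $U_{>0}\cdot (G/B)_{\ge 0}\subseteq (G/B)_{\ge 0}$ and $U_{>0}\cdot (G/B)_{\ge 0}\subseteq B^-eB/B$. The first inclusion follows directly from the monoid structure of $G_{\ge 0}$ and a closure argument: if $gB=\lim g_nB$ with $g_n\in G_{>0}$, then $u_+g_n\in U_{>0}\cdot G_{>0}\subseteq G_{>0}$, so the limit $u_+gB$ lies in $(G/B)_{\ge 0}$. The second inclusion is the key positivity step. The idea is to approximate $gB$ by $u_nB$ with $u_n\in U^-_{>0}$ and apply Lusztig's Gauss factorization $G_{>0}=U^-_{>0}T_{>0}U_{>0}$ to $u_+u_n\in U_{>0}U^-_{>0}\subseteq G_{>0}$, obtaining $u_+u_n=u_n^-t_nu_n^+$ with $u_n^-\in U^-_{>0}$, $t_n\in T_{>0}$, $u_n^+\in U_{>0}$. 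Then $u_+u_nB=u_n^-B\in U^-_{>0}B/B\subseteq B^-eB/B$.

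The main obstacle is the passage to the limit: to conclude $u_+gB\in B^-eB/B$, one must show that the sequence $u_n^-$ stays in a compact subset of $U^-$, so that (on a subsequence) $u_n^-\to u^-\in U^-_{\ge 0}$ and $u_+gB=u^-B$ lies in $U^-_{\ge 0}B/B\subseteq B^-eB/B$. This boundedness encodes a \emph{compression} property: left multiplication by $u_+\in U_{>0}$ pushes the boundary of $(G/B)_{\ge 0}$ (points lying outside the big cell $B^-eB/B$) into the interior of $U^-_{\ge 0}B/B$. I would establish it by a Plücker-coordinate analysis: the factorization yields $\Delta_{\varpi_i}(u_+u_n)=\varpi_i(t_n)$, from which one can control $t_n$ and in turn $u_n^-$ via the identity $u_n^-=u_+u_n(t_nu_n^+)^{-1}$, showing that the $u_n^-$ remain within a fixed compact region of $U^-$ as $n\to\infty$. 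Alternatively, one can invoke an explicit positive parametrization of the Richardson strata of $(G/B)_{\ge 0}$, on each of which left multiplication by $u_+$ has a manifestly positive description landing in $U^-_{\ge 0}B/B$.
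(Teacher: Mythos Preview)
Your treatment of (2) is correct and is precisely the argument the paper indicates via its citations: the identification $(G/B)_{\ge 0}\cap B^-eB/B=U^-_{\ge 0}B/B$ (closure of $U^-_{>0}$ inside the open cell) together with the equality $U^-(w)=U^-_{\ge 0}\cap B\dot wB$ immediately yields the claim.

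For (1), your limit argument has a genuine gap that your proposed fix does not close. The quantity you propose to control, $\varpi_i(t_n)=\Delta_{\varpi_i}(u_+u_n)$, is not determined by the converging data: you only know $u_nB\to gB$ in $G/B$, and when $gB\notin B^-eB/B$ the elements $u_n$ diverge in $U^-$, so there is no a priori bound on $\Delta_{\varpi_i}(u_+u_n)$ or on $t_n$. Worse, boundedness of $u_n^-$ is \emph{equivalent} to $u_+gB\in B^-eB/B$ (via the homeomorphism $U^-\cong B^-eB/B$), so the Pl\"ucker step is circular. Your alternative via Marsh--Rietsch parametrizations of the cells $\mathcal R_{v,w;>0}$ can be pushed through, but that is substantial extra input, not the short argument your plan suggests.

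The argument the paper invokes (Rietsch's, which carries over verbatim) avoids limits altogether. One clean formulation: for $hB\in (G/B)_{\ge 0}$ the point $[hv_{\varpi_i}]\in\mathbb P(V_{\varpi_i})$ has nonnegative coordinates in a suitable positive basis, while for $u_+\in U_{>0}$ the matrix of $u_+$ on $V_{\varpi_i}$ has strictly positive entries in every position not identically zero on $U$; by irreducibility the entire top row (the $v_{\varpi_i}$-row) is strictly positive. Hence the highest-weight coordinate of $u_+hv_{\varpi_i}$ is a strictly positive combination of nonnegative numbers, not all zero, and is therefore nonzero. This gives $\Delta_{\varpi_i}(u_+h)\neq 0$ for all $i$, i.e.\ $u_+hB\in B^-eB/B$, directly.
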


\begin{proof}
Claim (1) was proved in the proof of \cite[Corollary~10.5]{Rietsch2006} for Lie type A, and the argument works verbatim for our setting as well.

For claim (2), note that we have $U^-(w) = U^-_{\ge0}\cap B\dot{w}B$ for $w\in W$ (e.g. \cite[Lemma~6.1]{abe2025totally}). By using this equality, one can verify claim (1) by a straightforward argument
(e.g.\ \cite[Sect.\ 1.3]{rietsch1999algebraic}).
\end{proof}

\vspace{10pt}

\subsection{Totally nonnegative parts of Richardson strata}\label{subsec Totally nonnegative part of Richardson strata}

In this section, we give a description of totally nonnegative parts of Richardson strata of $Y$ (Proposition~\ref{prop description for R KI>0 2}). We begin with preparing some basic properties of the totally nonnegative part of $G/B$.

\begin{lemma}\label{lem yw0=u+b 2}
For $x\in U$, we have 
\begin{equation*}
x\dot{w}_0B\in (G/B)_{\ge0}
\quad \text{if and only if} \quad
x\in U_{\ge0}.
\end{equation*}
\end{lemma}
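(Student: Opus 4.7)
The plan is to prove the two implications separately, reducing in each case to Lemma~\ref{lem two facts for TNN} via closure arguments.

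For $(\Leftarrow)$, the key step is to first show $\dot{w}_0 B \in (G/B)_{\geq 0}$. Fix any $u_0 \in U^-_{>0}$, and let $\rho^{\vee} \coloneqq \sum_{i \in I} \varpi_i^{\vee}$. Since $U^-_{>0} = U^-(w_0) \subseteq B\dot{w}_0 B$, the bijection $U \xrightarrow{\sim} X_{w_0}^{\circ}$, $y \mapsto y\dot{w}_0 B$, produces a unique $x_{u_0} \in U$ with $u_0 B = x_{u_0}\dot{w}_0 B$. Using $w_0(\rho^{\vee}) = -\rho^{\vee}$ (whence $t^{\rho^{\vee}}\dot{w}_0 B = \dot{w}_0 t^{-\rho^{\vee}} B = \dot{w}_0 B$), I compute, for $t > 0$,
\begin{align*}
 t^{\rho^{\vee}} u_0 B = \bigl(t^{\rho^{\vee}} x_{u_0} t^{-\rho^{\vee}}\bigr)\dot{w}_0 B.
\end{align*}
Each left-hand side lies in $(G/B)_{>0}$ since $t^{\rho^{\vee}} u_0 \in T_{>0} U^-_{>0} \subseteq G_{>0}$. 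Because $\langle\alpha, \rho^{\vee}\rangle \geq 1$ for every $\alpha \in \Phi^+$, conjugation by $t^{\rho^{\vee}}$ scales each positive-root coordinate of $x_{u_0}$ by a positive power of $t$, so $t^{\rho^{\vee}} x_{u_0} t^{-\rho^{\vee}} \to 1$ as $t \to 0^+$. Hence $t^{\rho^{\vee}} u_0 B \to \dot{w}_0 B$, and closedness of $(G/B)_{\geq 0}$ gives $\dot{w}_0 B \in (G/B)_{\geq 0}$. Now Lemma~\ref{lem two facts for TNN}(1) yields $x\dot{w}_0 B \in (G/B)_{\geq 0}$ for every $x \in U_{>0}$, and the case $x \in U_{\geq 0} = \overline{U_{>0}}$ follows by continuity of left multiplication and closedness.

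For $(\Rightarrow)$, the strategy is to establish the identity
\begin{align*}
 U^-_{>0} B/B = U_{>0}\dot{w}_0 B/B \qquad \text{(as subsets of } X_{w_0}^{\circ} \text{).}
\end{align*}
Granting this, combining with Lemma~\ref{lem two facts for TNN}(2) at $w = w_0$ and using that $(G/B)_{>0} = U^-_{>0} B/B \subseteq X_{w_0}^{\circ}$ (open in $G/B$), one obtains
\begin{align*}
 (G/B)_{\geq 0} \cap X_{w_0}^{\circ}
 = \overline{(G/B)_{>0}} \cap X_{w_0}^{\circ}
 = \overline{U_{>0}\dot{w}_0 B/B} \cap X_{w_0}^{\circ}
 = U_{\geq 0}\dot{w}_0 B/B ,
\end{align*}
the last equality via the homeomorphism $U \cong X_{w_0}^{\circ}$ and $U_{\geq 0} = \overline{U_{>0}}$. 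Since $x\dot{w}_0 B \in X_{w_0}^{\circ}$ is automatic for $x \in U$, it follows that $x\dot{w}_0 B \in (G/B)_{\geq 0}$ forces $x \in U_{\geq 0}$, completing the forward direction.

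The main obstacle is the identity $U^-_{>0} B/B = U_{>0}\dot{w}_0 B/B$. This is essentially the Berenstein--Fomin--Zelevinsky twist/chamber-ansatz identification: every $u \in U^-_{>0}$ admits a unique factorization $u = \tilde{x}\cdot t\cdot\dot{w}_0\cdot b$ with $\tilde{x} \in U_{>0}$, $t \in T$, and $b \in B$, giving $uB = \tilde{x}\dot{w}_0 B$ with $\tilde{x} \in U_{>0}$, and conversely every such point arises this way. A self-contained derivation proceeds by induction on $\ell(w_0)$ starting from the rank-one identity $y_i(a) = x_i(a^{-1})\alpha^{\vee}_i(a^{-1})\dot{s}_i x_i(a^{-1})$ (valid for $a > 0$), iteratively sweeping the $\dot{s}_i$-factors to the right using positivity-preserving braid moves of the form $\dot{s}_i x_j(c) \dot{s}_i^{-1} = x_{s_i(\alpha_j)}(\pm c)$ and absorbing the accumulated $T$-factors. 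Equivalently, one may verify that $\Delta_{\varpi_i}(\tilde{x}\dot{w}_0) > 0$ for every $\tilde{x} \in U_{>0}$ and $i \in I$---which, via \eqref{eq Omega si by equation}, places $\tilde{x}\dot{w}_0 B$ in $B^- B/B$, and then Lemma~\ref{lem two facts for TNN}(2) identifies it with $uB$ for some $u \in U^-_{>0}$. In either formulation, this delicate positive rewriting reflects Lusztig's general positivity theorems for matrix coefficients in canonical bases, and is the heart of the lemma.
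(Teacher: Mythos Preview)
Your $(\Leftarrow)$ argument is correct and in fact more elementary than the paper's, which invokes \cite[Theorem~8.7]{lusztig1994total} for both directions. One technicality: since $G$ is simply connected, $\Hom(\C^\times,T)=\CRL$ is the coroot lattice, and $\rho^\vee=\sum_i\varpi_i^\vee$ lies only in the coweight lattice $\CWL$. Replace $\rho^\vee$ by $2\rho^\vee=\sum_{\alpha\in\Phi^+}\alpha^\vee\in\CRL$ (or any regular dominant element of $\CRL$); the identity $t^{\mu}\dot{w}_0B=\dot{w}_0B$ holds for any cocharacter $\mu$ since $\dot{w}_0^{-1}t^{\mu}\dot{w}_0\in T\subseteq B$, and the contraction $t^{\mu}x_{u_0}t^{-\mu}\to1$ needs only $\langle\alpha,\mu\rangle>0$ for $\alpha\in\Phi^+$.

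For $(\Rightarrow)$, your closure reduction is valid, but the identity $U^-_{>0}B/B=U_{>0}\dot{w}_0B/B$ is exactly the content of \cite[Theorem~8.7]{lusztig1994total}, which the paper simply cites. Your sketch (a) via iterated rank-one factorizations and ``positivity-preserving braid moves'' is not a proof: the sign in $\dot{s}_ix_j(c)\dot{s}_i^{-1}=x_{s_i\alpha_j}(\pm c)$ is the whole difficulty, and controlling it in general type is precisely Lusztig's positivity. Your alternative (b), checking $\Delta_{\varpi_i}(\tilde{x}\dot{w}_0)>0$ for $\tilde{x}\in U_{>0}$, establishes only $U_{>0}\dot{w}_0B/B\subseteq U^-_{>0}B/B$; for your closure argument you need the \emph{opposite} inclusion $(G/B)_{>0}=U^-_{>0}B/B\subseteq U_{>0}\dot{w}_0B/B$, so this route does not close the gap on its own.

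The paper organizes $(\Rightarrow)$ differently: given $x\dot{w}_0B\in(G/B)_{\ge0}$, it left-multiplies by $\xi(t)=\exp(te)\in U_{>0}$ and uses Lemma~\ref{lem two facts for TNN} to land in $(G/B)_{>0}$; then Lusztig's theorem translates $\xi(t)x\dot{w}_0B\in(G/B)_{>0}$ into $\xi(t)xB^-\in(G/B^-)_{>0}=U_{>0}B^-/B^-$, forcing $\xi(t)x\in U_{>0}$, and one lets $t\to0$. Both routes ultimately rest on \cite[Theorem~8.7]{lusztig1994total}; the paper's version names it explicitly rather than re-deriving it.
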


\begin{proof}
By \cite[Theorem~8.7]{lusztig1994total}, 
the isomorphism
\begin{equation*}
G/B^-\to G/B
\quad ; \quad 
gB^-\mapsto g\dot{w}_0B
\end{equation*}
restricts to a homeomorphism between $(G/B^-)_{>0}$ and $(G/B)_{>0}$.
In fact, for a given $uB^-\in (G/B^-)_{>0}$ with $u\in U_{>0}$, Lusztig's claim above implies that there exists $v\in U^-_{>0}$ such that $uB^-u^{-1}=vBv^{-1}$. This means that $v^{-1}u\dot{w}_0\in N_G(B)=B$ (\cite[Theorem~23.1]{humphreys2012linear}) so that $u\dot{w}_0B=vB$ belongs to $(G/B)_{>0}$. 
By taking closures, it also restricts to a homeomorphism between $(G/B^-)_{\ge0}$ and $(G/B)_{\ge0}$. 
We use this observation to prove the claim of this Lemma.

Suppose that $x\in U_{\ge0}$. 
Since $U_{\ge0}\subseteq G_{\ge0}$, we have $xB^-\in (G/B^-)_{\ge0}$ which means that 
\begin{equation*}
x\dot{w}_0B \in (G/B)_{\ge0}
\end{equation*}
by the above observation.

Conversely, suppose that $x\dot{w}_0B\in (G/B)_{\ge0}$. Let $t>0$ and set $\xi(t)\coloneqq\exp(te)$, where $e$ is the regular nilpotent element defined in \eqref{eq def of e}.
Then we have  $\xi(t)\in U_{>0}$ by \cite[Proposition 5.9 (a)]{lusztig1994total}.
Thus, we have from Lemma~\ref{lem two facts for TNN}~(1) that 
\begin{equation*}
 \xi(t)x\dot{w}_0B \in 
 (G/B)_{\ge0}\cap (B^-eB/B) .
\end{equation*}
Since we also have $\xi(t)x\dot{w}_0B\in B\dot{w}_0B/B$, we obtain 
\begin{equation*}
 \xi(t)x\dot{w}_0B\in 
 (G/B)_{\ge0}\cap (B^-eB/B)\cap (B\dot{w}_0B/B)=
 (G/B)_{>0} ,
\end{equation*}
where the last equality follows from Lemma~\ref{lem two facts for TNN}~(2) for the case $w=w_0$.
Thus, by the above observation, we obtain
\begin{equation*}
\xi(t)xB^- \in (G/B^-)_{>0}.
\end{equation*}
By the definition of $(G/B^-)_{>0}$, this means that there exists $x'\in U_{>0}$ such that $\xi(t)xB^-=x'B^-$.
Since $\xi(t)x,x'\in U$, this implies $\xi(t)x=x'$ so that $\xi(t)x\in U_{>0}$. Taking the limit $t\rightarrow 0$, we obtain $x\in \overline{U_{>0}}=U_{\ge0}$.
\end{proof}

\vspace{20pt}

In \cite[Proposition~3.2]{lam2016total}, Lam--Rietsch reformulated a result of Bernstein--Zelevinsky given in \cite[Theorem~6.9]{berenstein1997total}, which deduces the next claim.

\begin{proposition}\label{prop Delta is nonnegative}
For $x\in U_{\ge0}$ and $w\in W$, we have
\begin{equation*}
 \Delta_{\varpi_i}(x\dot{w}) \ge0
 \qquad (i\in I),
\end{equation*}
where $\Delta_{\varpi_i}$ is the function defined in \eqref{eq def of Delta}.
\end{proposition}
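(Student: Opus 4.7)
The plan is to realize $\Delta_{\varpi_i}(x\dot{w})$ as a matrix coefficient of the fundamental representation $V_{\varpi_i}$ and then invoke Lusztig's positivity of the canonical basis. Writing $v_{\varpi_i}^{*}$ for the linear functional dual to $v_{\varpi_i}$ relative to the weight decomposition of $V_{\varpi_i}$, we have
\[
 \Delta_{\varpi_i}(x\dot{w}) = \langle v_{\varpi_i}^{*},\, x\dot{w}v_{\varpi_i}\rangle,
\]
so it suffices to control the expansion of $x\dot{w}v_{\varpi_i}$ in a basis adapted to positivity.

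First, I would recall Lusztig's canonical basis $\mathbf{B}$ of $V_{\varpi_i}$, whose defining feature is that for every $j\in I$ and every $a\ge 0$ the operators $x_j(a)$ and $y_j(a)$ act on $\mathbf{B}$ by matrices with entries in $\R_{\ge 0}$, and $v_{\varpi_i}\in\mathbf{B}$. Since $U_{\ge0}$ is the monoid generated by the $x_j(a)$ with $a\ge 0$, left multiplication by any $x\in U_{\ge0}$ preserves the $\R_{\ge0}$-cone spanned by $\mathbf{B}$. Consequently, if I can show that $\dot{w}v_{\varpi_i}$ already lies in $\R_{\ge0}\cdot\mathbf{B}$, then $x\dot{w}v_{\varpi_i}$ lies in the same cone, and in particular its coefficient on the basis vector $v_{\varpi_i}$ is nonnegative.

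The remaining input is the nonnegativity of the $\mathbf{B}$-expansion of $\dot{w}v_{\varpi_i}$. Here the specific choice of lift in \eqref{eq choice of representative 1}--\eqref{eq choice of representative 2} is essential: Lusztig's $\dot{s}_i = y_i(1)x_i(-1)y_i(1)$ is the unique lift (up to the braid relations) for which the extremal weight vectors $\dot{w}v_{\varpi_i}$ belong to $\R_{\ge0}\cdot\mathbf{B}$. This nonnegativity can be proved by induction on the length of a reduced expression $w=s_{i_1}\cdots s_{i_k}$, using that $\dot{s}_j$ acts on the weight string through $v_{\varpi_i}$ by an explicit $\mathfrak{sl}_2$-formula whose signs are absorbed by the form of $\dot{s}_j$. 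This is precisely the content of the Berenstein--Zelevinsky theorem \cite[Theorem~6.9]{berenstein1997total}, whose reformulation by Lam--Rietsch \cite[Proposition~3.2]{lam2016total} yields the claim upon pairing with $v_{\varpi_i}^{*}$.

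The only real obstacle is bookkeeping with sign conventions: several standard references use lifts $\dot{s}_i$ differing from \eqref{eq choice of representative 1} by $\pm 1$, and the positivity assertion is sensitive to this choice. Once the conventions are aligned with those of \cite{lusztig1994total}, the proof reduces to a direct quotation of the cited result.
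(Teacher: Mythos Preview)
Your proposal reaches the correct references (Berenstein--Zelevinsky and Lam--Rietsch) but the route through canonical-basis positivity leaves a gap outside simply-laced type. The statement that $x_j(a)$ and $y_j(a)$ act on Lusztig's canonical basis $\mathbf{B}$ by nonnegative matrices is a theorem, not a ``defining feature''; Lusztig proves it in \cite[Proposition~3.2]{lusztig1994total} only for simply-laced $G$, and the Remark immediately following the proposition in the paper makes exactly this distinction. Neither \cite[Theorem~6.9]{berenstein1997total} nor \cite[Proposition~3.2]{lam2016total} asserts full canonical-basis positivity; what they yield is the positivity of the specific pairings $\langle \dot{w}v_{\varpi_i},\, yv_{\varpi_i}\rangle$ for $y\in U^-_{>0}$, which is weaker than the cone-preservation your argument uses. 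So in non-simply-laced type the implication ``$\dot{w}v_{\varpi_i}\in\R_{\ge0}\cdot\mathbf{B}$ and $x\in U_{\ge0}$ preserves that cone'' is not supplied by the references you cite.

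The paper's proof avoids canonical bases entirely. It uses contravariance of the Shapovalov form to rewrite
\[
 \Delta_{\varpi_i}(x\dot{w}) \;=\; \langle x\dot{w}v_{\varpi_i},\, v_{\varpi_i}\rangle \;=\; \langle \dot{w}v_{\varpi_i},\, yv_{\varpi_i}\rangle,
\]
where $y\in U^-_{\ge0}$ is the image of $x$ under the anti-involution exchanging $e_i\leftrightarrow f_i$. This is precisely the shape to which \cite[Proposition~3.2]{lam2016total} applies directly (for $y\in U^-_{>0}$, then pass to the closure $U^-_{\ge0}=\overline{U^-_{>0}}$). The contravariance step is the idea missing from your outline; once you insert it, the argument is uniform across all Lie types and the canonical-basis discussion becomes unnecessary.
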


\begin{proof}
Let $v_{\varpi_i}$ be a highest weight vector in the fundamental representation $V_{\varpi_i}$, and $\shapo{\ }{\ }$ the Shapovalov form on $V_{\varpi_i}$ which is uniquely determined by the condition $\shapo{v_{\varpi_i}}{v_{\varpi_i}}=1$ (\cite[Sect.\ 3.14]{humphreys2008representations}).
Then our function $\Delta_{\varpi_i}$ (defined in \eqref{eq def of Delta}) can be expressed as 
\begin{align*}
 \Delta_{\varpi_i}(x\dot{w}) = \shapo{x\dot{w}v_{\varpi_i}}{v_{\varpi_i}}
 = \shapo{\dot{w}v_{\varpi_i}}{yv_{\varpi_i}}
\end{align*}
for some $y\in U^-_{\ge0}$ since the Shapovalov form is contravariant. Now the claim follows from \cite[Proposition~3.2]{lam2016total} and $U^-_{\ge0}=\overline{U^-_{>0}}$.
\end{proof}

\vspace{10pt}

\noindent
\textbf{Remark.}\ 
When $G$ is simply-laced, Proposition~$\ref{prop Delta is nonnegative}$ is a consequence of the positivity of canonical bases (see \cite[Proposition~3.2]{lusztig1994total} and \cite[Sect.\ 1.7]{lusztig1998total} for details).

\vspace{20pt}

Recall that $G_{\ge0}$ was defined by using $x_i$, $y_i$, $\chi$ for $i\in I$ and $\chi\in\Hom(\C^{\times},T)$.
By restricting the range of $i$ to a subset $J\subseteq I$, we define the totally nonnegative parts $(U_J)_{\ge0}$, $(U^-_J)_{\ge0}$, $(G_J)_{\ge0}$ (resp.\ totally positive parts $(U_J)_{>0}$, $(U^-_J)_{>0}$, $(G_J)_{>0}$) in the same ways as above.

\begin{lemma}\label{lem UJ nonnegative}
We have $(U_J)_{\ge0}=U_J\cap U_{\ge0}$.
\end{lemma}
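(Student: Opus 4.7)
The inclusion $(U_J)_{\ge 0} \subseteq U_J \cap U_{\ge 0}$ is immediate from unpacking definitions: the submonoid $(U_J)_{\ge 0}$ is generated by $x_i(a)$ for $i \in J$ and $a \in \R_{\ge 0}$, and each such generator already lies in $U_J$ and belongs to the generating set of $U_{\ge 0}$; closure under multiplication of $U_J \cap U_{\ge 0}$ then gives one direction.

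For the reverse inclusion $U_J \cap U_{\ge 0} \subseteq (U_J)_{\ge 0}$, my plan is to exhibit an algebraic group homomorphism $\pi_J \colon U \twoheadrightarrow U_J$ which restricts to the identity on $U_J$, satisfies $\pi_J(x_i(a)) = x_i(a)$ for $i \in J$, and satisfies $\pi_J(x_i(a)) = 1$ for $i \in I - J$. The natural source of such a retraction is the standard parabolic $P_J \supseteq B$: writing $P_J = L_J \ltimes U^J$ where $U^J$ is the unipotent radical of $P_J$ generated by the $U_\alpha$ for $\alpha \in \Phi^+$ outside the root subsystem spanned by $\{\alpha_j : j \in J\}$, we have $U \subseteq P_J$, $U_J = U \cap L_J$, and $U^J \subseteq U$ is normal in $U$ (since $L_J$, hence $U_J$, normalizes $U^J$). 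This yields a semi-direct product decomposition $U = U_J \ltimes U^J$ and hence the desired projection $\pi_J$. Its action on the generators $x_i(a) = \exp(a e_i)$ is then read off from the Lie algebra: $e_i \in \mathfrak{l}_J$ when $i \in J$, whereas $e_i \in \Lie(U^J)$ when $i \in I - J$.

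With $\pi_J$ available, the conclusion is a one-line check: take $x \in U_J \cap U_{\ge 0}$ and, using $x \in U_{\ge 0}$, write $x = x_{i_1}(a_1) \cdots x_{i_m}(a_m)$ with $a_k \in \R_{\ge 0}$; since $x \in U_J$ we have $\pi_J(x) = x$, while on the other hand $\pi_J(x) = \prod_{k : i_k \in J} x_{i_k}(a_k)$ (in the induced order), and the latter is a product of generators of $(U_J)_{\ge 0}$, proving $x \in (U_J)_{\ge 0}$. The only point that is not purely formal is the verification of the semi-direct product structure $U = U_J \ltimes U^J$ and the Lie-algebra-level identification of where each $e_i$ lives; both are standard parabolic theory, so I do not expect any serious obstacle, and no input from total positivity beyond the definition of $U_{\ge 0}$ as a generated submonoid is needed.
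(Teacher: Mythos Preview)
Your proof is correct and takes a genuinely different route from the paper's. The paper does not construct a retraction; instead it invokes Lusztig's cell decomposition $U_{\ge0}=\bigsqcup_{w\in W}U(w)$ together with the containment $U(w)\subseteq B^-\dot w B^-$. Given $x\in U_J\cap U_{\ge0}$, the paper picks the unique $w$ with $x\in U(w)$, then uses the Bruhat decomposition of $G_J$ (with respect to $B_J^-$) to see that $U_J\cap B^-\dot wB^-=\emptyset$ unless $w\in W_J$; hence $w\in W_J$, and then $U(w)\subseteq (U_J)_{\ge0}$ directly by the definition of $U(w)$ via a reduced word. Your argument, by contrast, uses only the parabolic semidirect product $U=U_J\ltimes U^J$ to build a group homomorphism $\pi_J\colon U\to U_J$ killing $x_i(a)$ for $i\notin J$ and fixing $x_i(a)$ for $i\in J$; applying $\pi_J$ to any word representing $x$ simply deletes the off-$J$ letters. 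This is more elementary---it needs no total positivity theory beyond the monoid definition and no cell structure---while the paper's approach yields the sharper information that the entire cell $U(w)$ containing $x$ already sits inside $(U_J)_{\ge0}$.
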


\begin{proof}
The inclusion $(U_J)_{\ge0}\subseteq U_J\cap U_{\ge0}$ is obvious.
To prove the opposite inclusion, suppose that $x\in U_J\cap U_{\ge0}$.
We have $U_J\subseteq G_J$, and $G_J$ is a semisimple group with the Weyl group $W_J$. 
For each $w\in W_J$, we note that $\dot{w}\in G_J$ because of its definition given by \eqref{eq choice of representative 1} and \eqref{eq choice of representative 2}.
Thus, by the Bruhat decomposition of $G_J$ with respect to the opposite Borel subgroup $B^-_J$ of $B_J$, we have
\begin{equation*}
 U_J \subseteq \bigsqcup_{w\in W_J} B_J^- \dot{w} B_J^- 
 \subseteq \bigsqcup_{w\in W_J} B^- \dot{w} B^- .
\end{equation*}
In particular, we have
\begin{equation}\label{eq Bruhat decomposition of UJ}
 U_J\cap B^- \dot{w} B^-=\emptyset \quad \text{for all $w\notin W_J$}.
\end{equation}
Since $x\in U_{\ge0}=\sqcup_{w\in W} U(w)$, there exists $w\in W$ such that $x\in U(w)$. 
Combining this with $U(w)\subseteq B^-wB^-$, we obtain 
\begin{equation*}
 x\in U_J\cap B^-wB^-.
\end{equation*}
This implies by \eqref{eq Bruhat decomposition of UJ} that $w\in W_J$ so that $U(w)\subseteq (U_J)_{\ge0}$.
Since $x\in U(w)$, we conclude $x\in (U_J)_{\ge0}$, as desired.
\end{proof}

\vspace{10pt}

The totally nonnegative part $(G_J/B_J)_{\ge0}$ is also defined similarly.
Note that the inclusion $G_J\hookrightarrow G$ induces a closed embedding
\begin{align*}
 G_J/B_J \hookrightarrow G/B
 \quad ; \quad 
 g_JB_J \mapsto g_JB.
\end{align*}
The following claim seems to be well-known. We give a proof in Appendix (Section~\ref{subsec Appendix 2}).

\begin{lemma}\label{lem closed embedding of TN 3}
Under the closed embedding $G_J/B_J \hookrightarrow G/B$, the image of $(G_J/B_J)_{\ge0}$ is precisely $G_J/B_J\cap (G/B)_{\ge0}$.
\end{lemma}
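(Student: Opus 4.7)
The plan is to prove the two inclusions separately, with the reverse direction being the main point.

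The forward inclusion $(G_J/B_J)_{\ge 0}\subseteq (G/B)_{\ge 0}\cap G_J/B_J$ will follow from a soft closure argument. First, applying the same proof as Lemma~\ref{lem UJ nonnegative} to $U^-$ in place of $U$ gives $(U^-_J)_{\ge 0}=U^-_J\cap U^-_{\ge 0}$. Hence $(U^-_J)_{>0}\subseteq U^-_{\ge 0}=\overline{U^-_{>0}}$, so that for any $u\in(U^-_J)_{>0}$, writing $u=\lim u_n$ with $u_n\in U^-_{>0}$ yields $uB=\lim u_n B\in\overline{(G/B)_{>0}}=(G/B)_{\ge 0}$. Since $G_J/B_J$ is closed in $G/B$, the closure in $G_J/B_J$ of $(G_J/B_J)_{>0}=(U^-_J)_{>0}B_J/B_J$ coincides with its closure in $G/B$, yielding the desired containment.

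For the reverse inclusion, I would invoke Rietsch's cell decomposition $(G/B)_{\ge 0}=\bigsqcup_{v\le w}\mathcal{R}^{>0}_{v,w}$, where $\mathcal{R}^{>0}_{v,w}$ is the totally positive part of the open Richardson cell $\mathcal{R}_{v,w}=B^-\dot{v}B/B\cap B\dot{w}B/B$. Given $gB\in G_J/B_J\cap(G/B)_{\ge 0}$ with $g\in G_J$, let $(v,w)$ be the unique pair with $gB\in\mathcal{R}^{>0}_{v,w}$. The Bruhat decompositions $G_J=\bigsqcup_{w'\in W_J}B_J\dot{w'}B_J$ and $G_J=\bigsqcup_{v'\in W_J}B_J^-\dot{v'}B_J^-$, combined with uniqueness of (opposite) Bruhat cells in $G$, force $v,w\in W_J$. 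Next, by the Marsh--Rietsch parametrization of $\mathcal{R}^{>0}_{v,w}$ associated to a reduced expression $w=s_{i_1}\cdots s_{i_m}$ (necessarily $i_k\in J$), points of this cell are explicit products of $x_{i_k}(a_k)$ and $\dot{s}_{i_k}$, all of which lie in $G_J$; the identical formulas describe the corresponding positive Richardson cell in $(G_J/B_J)_{\ge 0}$. Therefore $gB\in (G_J/B_J)_{\ge 0}$.

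The principal obstacle is the reliance on the cell-level structure theory of Rietsch and Marsh--Rietsch for the reverse inclusion; the forward inclusion is purely formal. An alternative would be to proceed directly via Lemma~\ref{lem two facts for TNN}: multiplying $gB$ by $\xi(t)=\exp(te)\in U_{>0}$ places $\xi(t)gB$ in the open opposite Bruhat cell and writes it as $u_tB$ with $u_t\in U^-_{\ge0}$, after which a Bruhat-uniqueness argument (analogous to the proof of Lemma~\ref{lem UJ nonnegative}) would identify $u_t\in U^-_J$. However, $\xi(t)g$ generally leaves $G_J$, so taking the limit $t\to 0$ only recovers $gB$ in the open cell case; handling the boundary contributions from lower Schubert strata seems to require the very same cell-wise parametrization.
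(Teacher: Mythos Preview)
Your proof is correct but follows a genuinely different route from the paper. For the reverse inclusion you invoke Rietsch's cell decomposition $(G/B)_{\ge0}=\bigsqcup_{v\le w}\mathcal{R}^{>0}_{v,w}$ and the Marsh--Rietsch parametrization of each cell; once $v,w\in W_J$ is established, the explicit representatives lie in $G_J$, and the identical formulas parametrize the corresponding positive Richardson cell of $G_J/B_J$. This is clean and conceptually transparent, at the cost of importing substantial structure theory that the paper has not set up.

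The paper instead carries out precisely the alternative you sketch and then dismiss. Two small tricks make it work. First, one begins by writing $g_JB_J=x\dot{w}B_J$ with $x\in U_J$ and $w\in W_J$ via the Bruhat decomposition of $G_J/B_J$; then Lemma~\ref{lem two facts for TNN}(2) gives $\xi(t)x\dot{w}B\in U^-(w)B/B$, and since $w\in W_J$ the resulting $u^-\in U^-(w)$ already lies in $(U^-_J)_{\ge0}$ --- no separate Bruhat-uniqueness step is needed. Second, the obstacle that $\xi(t)\notin G_J$ is handled by the Levi decomposition $P_J=V_JL_J$: splitting $\xi(t)=\xi_1(t)\xi_2(t)$ with $\xi_1(t)\in V_J$ and $\xi_2(t)\in U\cap L_J=U_J$, normality of $V_J$ in $P_J$ yields $\xi_2(t)x\dot{w}B_J=u^-B_J\in(G_J/B_J)_{\ge0}$, after which $t\to 0$ (hence $\xi_2(t)\to 1$) finishes the argument with no case analysis over Schubert strata. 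So the elementary approach does succeed; your route is valid, but the heavier machinery is not needed.
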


\vspace{10pt}

Recall from \eqref{eq Schubert cell for wJ} that an element of the Schubert cell $X_{w_J}^{\circ}$ can be written as $x\dot{w}_JB$ for a unique $x\in U_J$.
The next claim gives us a criterion for $x\dot{w}_JB$ to lie in $(G/B)_{\ge 0}$. 

\begin{proposition}\label{prop x in U-ge 0 2}
Let $J\subseteq I$, and suppose that $x\in U_J$. Then, we have
\begin{equation*}
x\dot{w}_JB\in (G/B)_{\ge 0}
\quad \text{if and only if} \quad
x\in (U_J)_{\ge0}.
\end{equation*}
\end{proposition}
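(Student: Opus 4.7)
The plan is to reduce the statement to Lemma~\ref{lem yw0=u+b 2}, applied to the semisimple simply-connected subgroup $G_J$ in place of $G$. Since $w_J$ is by definition the longest element of the Weyl group $W_J$ of $G_J$, Lemma~\ref{lem yw0=u+b 2} (with $(G,B,T)$ replaced by $(G_J,B_J,T_J)$) immediately gives, for any $x\in U_J$,
\begin{equation*}
 x\dot{w}_J B_J \in (G_J/B_J)_{\ge0}
 \quad \text{if and only if} \quad
 x\in (U_J)_{\ge0}.
\end{equation*}
Before invoking this, I would quickly verify that the representative $\dot{w}_J\in N_G(T)$ specified by \eqref{eq choice of representative 1} and \eqref{eq choice of representative 2} in fact lies in $G_J$: this holds because for each $j\in J$ the Chevalley generators $e_j,f_j$ lie in $\Lie(G_J)$, hence $\dot{s}_j\in G_J$, and $\dot{w}_J$ is a product of such $\dot{s}_j$ for a reduced expression of $w_J$ in $W_J$.

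Next I would transport this equivalence back to $G/B$ via Lemma~\ref{lem closed embedding of TN 3}. The closed embedding $G_J/B_J\hookrightarrow G/B$ sends $x\dot{w}_J B_J$ to $x\dot{w}_J B$, and the latter clearly lies in the image of $G_J/B_J$. Hence Lemma~\ref{lem closed embedding of TN 3} yields
\begin{equation*}
 x\dot{w}_J B\in (G/B)_{\ge0}
 \quad \Longleftrightarrow \quad
 x\dot{w}_J B_J\in (G_J/B_J)_{\ge0}.
\end{equation*}
Chaining the two equivalences completes the proof.

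There is no real obstacle beyond checking the compatibility of conventions, as the proof is essentially an assembly of prior results: Lemma~\ref{lem UJ nonnegative} ensures that $(U_J)_{\ge0}$ (defined intrinsically via the Chevalley data of $G_J$) agrees with $U_J\cap U_{\ge0}$, and Lemma~\ref{lem closed embedding of TN 3} provides the analogous compatibility on the flag variety. With these two compatibilities in hand, the reduction to Lemma~\ref{lem yw0=u+b 2} applied to the smaller group $G_J$ is clean, and the only genuinely new input needed is the straightforward remark that $\dot{w}_J\in G_J$.
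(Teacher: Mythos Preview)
Your proof is correct and follows essentially the same approach as the paper: note that $\dot{w}_J\in G_J$, use Lemma~\ref{lem closed embedding of TN 3} to pass between $(G/B)_{\ge0}$ and $(G_J/B_J)_{\ge0}$, and then apply Lemma~\ref{lem yw0=u+b 2} to $G_J$. The only difference is the order of presentation; the paper's argument is the same two-line chain of equivalences.
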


\begin{proof}
Note that we have $x\dot{w}_J\in G_J$ because of our choice of representatives given by \eqref{eq choice of representative 1} and \eqref{eq choice of representative 2}. Therefore, it follows that
\begin{align*}
 x\dot{w}_JB\in (G/B)_{\ge 0}
 &\quad \Longleftrightarrow \quad
 x\dot{w}_JB_J\in (G_J/B_J)_{\ge 0} \quad \text{(by Lemma~\ref{lem closed embedding of TN 3})}\\
 &\quad \Longleftrightarrow \quad
 x \in (U_J)_{\ge0} \qquad \text{(by Lemma~\ref{lem yw0=u+b 2} for $G_J$)}.
\end{align*}
\end{proof}

For $K\subseteq J\subseteq I$, we have the associated Richardson stratum $\RY{K}{J}$ in $Y$ (Section~\ref{subsec Richardson strata of Y}).
We set its totally nonnegative part as
\begin{align*}
\RY{K}{J;>0} \coloneqq \RY{K}{J}\cap Y_{\ge0} = \RY{K}{J}\cap (G/B)_{\ge0}.
\end{align*}
It is clear that these pieces partition $Y_{\ge0}$:
\begin{align}\label{eq decomp of TNN of Y}
 Y_{\ge0} = \bigsqcup_{K\subseteq J\subseteq I} \RY{K}{J;>0} .
\end{align}
To give a description of $\RY{K}{J;>0}$, recall from \eqref{eq def of UJeJ} that $(U_J)^{e_J} = U_J\cap Z_{G_J}(e_J)$.
We set
\begin{align*}
&(U_J)^{e_J}_{\ge0} \coloneqq (U_J)_{\ge0}\cap Z_{G_J}(e_J) = (U_J)^{e_J}\cap U_{\ge0},
\end{align*}
where the second equality follows from Lemma~\ref{lem UJ nonnegative}.
The following claim gives us a description of totally nonnegative parts of Richardson strata (cf.\ Remark~\ref{rem on Delta being 1}).

\begin{proposition}\label{prop description for R KI>0 2}
Let $K\subseteq J\subseteq I$. Then, we have 
\begin{equation*}
\RY{K}{J;>0}=\left\{x\dot{w}_JB \ \left| \ 
\begin{matrix}
\text{
\rm $x\in (U_J)^{e_J}_{\ge0}$ and}
~\begin{cases}\Delta_{\varpi_i}(x\dot{w}_J)=0\quad\text{if}\quad i\in K\\
\Delta_{\varpi_i}(x\dot{w}_J)> 0\quad\text{if}\quad i\in J-K \\
\Delta_{\varpi_i}(x\dot{w}_J)=1 \quad\text{if}\quad i\in I-J
\end{cases}
\end{matrix}
\right.
\right\}.
\end{equation*}
\end{proposition}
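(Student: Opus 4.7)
The plan is to combine the earlier description of the Richardson strata (Proposition~\ref{prop description of YKJ}) with the total-nonnegativity criteria gathered in this section. Both inclusions will be essentially bookkeeping, organized around the partition $I = K\sqcup (J-K)\sqcup (I-J)$.

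For the forward inclusion, I would start with $x\dot{w}_JB\in\RY{K}{J;>0}$. Proposition~\ref{prop description of YKJ} already gives $x\in (U_J)^{e_J}$ together with $\Delta_{\varpi_i}(x\dot{w}_J)=0$ for $i\in K$ and $\Delta_{\varpi_i}(x\dot{w}_J)\ne 0$ for $i\in I-K$, so only three things need to be upgraded: $x$ must be totally nonnegative, the nonvanishing on $J-K$ must be strictly positive, and the nonvanishing on $I-J$ must in fact equal $1$. The nonnegativity of $x$ is precisely Proposition~\ref{prop x in U-ge 0 2} applied to the condition $x\dot{w}_JB\in (G/B)_{\ge 0}$; together with $x\in(U_J)^{e_J}$ this gives $x\in (U_J)^{e_J}_{\ge 0}$. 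Strict positivity on $J-K$ then follows by combining the inequality $\Delta_{\varpi_i}(x\dot{w}_J)\ge 0$ from Proposition~\ref{prop Delta is nonnegative} with the nonvanishing statement on $I-K\supseteq J-K$. For $i\in I-J$, I would use the fact that, by our specific choice of representatives in \eqref{eq choice of representative 1}--\eqref{eq choice of representative 2}, one has $\dot{w}_J\in G_J$, so $x\dot{w}_J\in G_J$; then Proposition~\ref{prop restriction of Delta}(ii) yields $\Delta_{\varpi_i}(x\dot{w}_J)=1$ directly, as foreshadowed in Remark~\ref{rem on Delta being 1}.

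For the reverse inclusion, I take $x$ and $x\dot{w}_J B$ satisfying the conditions on the right-hand side. Since $(U_J)^{e_J}_{\ge 0}\subseteq (U_J)^{e_J}$ and the three cases $K$, $J-K$, $I-J$ exhaust $I$ with the values $0$, positive, and $1$ respectively, the conditions of Proposition~\ref{prop description of YKJ} are satisfied (the only thing to check is that the nonvanishing on $I-K=(J-K)\sqcup (I-J)$ holds, which is immediate). Hence $x\dot{w}_J B\in \RY{K}{J}$. Furthermore, $x\in (U_J)_{\ge 0}\subseteq U_{\ge 0}$, so Proposition~\ref{prop x in U-ge 0 2} gives $x\dot{w}_JB\in (G/B)_{\ge 0}$, and consequently $x\dot{w}_JB\in \RY{K}{J;>0}$.

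There is no real obstacle here: the statement is essentially the conjunction of Proposition~\ref{prop description of YKJ}, Proposition~\ref{prop Delta is nonnegative}, Proposition~\ref{prop restriction of Delta}(ii), and Proposition~\ref{prop x in U-ge 0 2}, once one has fixed the representatives $\dot{w}_J$ so that $\dot{w}_J\in G_J$. The subtlest point to get right is simply the clean sign upgrade on $J-K$ (combining a strict inequality from total positivity with a weak one from nonnegativity of $\Delta_{\varpi_i}$), and keeping track of why the $I-J$ coordinates collapse to the value $1$ rather than an arbitrary positive number, which is exactly what the ``$\dot{w}_J\in G_J$'' bookkeeping of Remark~\ref{rem on Delta being 1} achieves.
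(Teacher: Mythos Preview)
Your proposal is correct and follows essentially the same route as the paper's proof: both arguments assemble Proposition~\ref{prop description of YKJ}, Proposition~\ref{prop restriction of Delta}(ii), Proposition~\ref{prop Delta is nonnegative}, and Proposition~\ref{prop x in U-ge 0 2} in the obvious way, with the only cosmetic difference being that the paper treats the inclusion $\supseteq$ first. Your commentary on the one subtlety---that $\dot{w}_J\in G_J$ is needed to invoke Proposition~\ref{prop restriction of Delta}(ii) and collapse the $I-J$ coordinates to $1$---matches exactly what the paper highlights.
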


\begin{proof}
Let $\Ry{K}{J;>0}$ be the right hand side of the desired equality.
We first show that
\begin{align}\label{eq first goal RKI nonnegative 2}
\RY{K}{J;>0} \supseteq \Ry{K}{J;>0}.
\end{align}
Recall that we have $\RY{K}{J;>0}=\RY{K}{J}\cap Y_{\ge0}=\RY{K}{J}\cap (G/B)_{\ge0}$.
It is obvious that $\Ry{K}{J;>0}\subseteq \RY{K}{J}$ by Proposition~\ref{prop description of YKJ}.
Thus, to see the inclusion \eqref{eq first goal RKI nonnegative 2}, it suffices to show that $x\in (U_J)^{e_J}_{\ge0}$ implies that $x\dot{w}_JB\in (G/B)_{\ge0}$. But this follows from Proposition~\ref{prop x in U-ge 0 2}.

To complete the proof, we show that the opposite inclusion holds in what follows.
Since $\RY{K}{J;>0}=\RY{K}{J}\cap (G/B)_{\ge0}$, 
Proposition~\ref{prop description of YKJ} implies that an arbitrary element of $\RY{K}{J;>0}$ can be written as $x\dot{w}_JB$ for some $x\in (U_J)^{e_J}$ such that
\begin{equation*}
\begin{cases}\Delta_{\varpi_i}(x\dot{w}_J)=0\quad\text{if}\quad i\in K\\
\Delta_{\varpi_i}(x\dot{w}_J)\ne 0\quad\text{if}\quad i\in I-K .
\end{cases}
\end{equation*}
Here, we have $\dot{w}_J\in G_J$ from our choice of representatives given by \eqref{eq choice of representative 1} and \eqref{eq choice of representative 2}.
So we additionally have 
\begin{equation*}
\Delta_{\varpi_i}(x\dot{w}_J)=1\quad\text{if}\quad i\in I-J
\end{equation*}
by Proposition~\ref{prop restriction of Delta}~(ii).
Since $x\dot{w}_JB\in\RY{K}{J;>0}=\RY{K}{J}\cap (G/B)_{\ge0}$ and $x\in U_J$, it follows that $x\in (U_J)_{\ge0}\subseteq U_{\ge0}$ by Proposition~\ref{prop x in U-ge 0 2}.
So we have $x\in (U_J)^{e_J}\cap U_{\ge0} = (U_J)^{e_J}_{\ge0}$.
Thus, to see that $x\dot{w}_JB\in \Ry{K}{J;>0}$, it suffices to show that $\Delta_{\varpi_i}(x\dot{w}_J)\ge0$ for all $i\in I$.
But this follows from Proposition~\ref{prop Delta is nonnegative}.
\end{proof}


\section{Toric orbifold associated to Cartan matrix}
\label{sec toric}

In this section, we study the toric orbifold which appeared in the introduction. It was first introduced by Blume (\cite{blume2015toric}), and later related topics are studied by several authors (e.g.\ \cite{abe2023peterson,abe2025totally,burrull2023strongly,crowley2024toric,HMSS21-toric-orbifolds}).

\subsection{A fan $\fan$ on $\mathfrak{t}_{\R}$}\label{sec construction of fan}

Recall that $I$ is the Dynkin diagram of $G$ with respect to the maximal torus $T\subseteq G$ and the Borel subgroup $B\subseteq G$ containing $T$.
We begin with constructing a fan on $\mathfrak{t}_{\R}\coloneqq \CWL\otimes_{\Z}\R$, where $\CWL=\Hom(\C^{\times},T/Z)=\bigoplus_{i\in I} \Z \varpi^{\vee}_i$ is the coweight lattice.
We regard $\mathfrak{t}_{\R}$ as a vector space over $\R$ whose lattice of integral vectors is the coweight lattice $\CWL$.

For disjoint subsets $K,J\subseteq I$ (i.e.\ $K\cap J=\emptyset$), let $\sigma_{K,J}\subset \mathfrak{t}_{\R}$ be the cone spanned by the simple coroots $-\alpha^{\vee}_i$ for $i\in K$ and the fundamental coweights $\varpi^{\vee}_i$ for $i\in J$ :
\begin{align*}
 \sigma_{K,J} \coloneqq \text{cone}( \{-\alpha^{\vee}_i \mid i\in K\}\cup\{\varpi^{\vee}_i \mid i\in J \} )\subset \mathfrak{t}_{\R},
\end{align*}
where we take the convention $\sigma_{\emptyset,\emptyset}\coloneqq\{\bm{0}\}$. 

\begin{definition}\label{def fan}
\textnormal{
Let $\fan$ be the set of the cones $\sigma_{K,J}$ for disjoint subsets $K, J\subseteq I$:
\begin{align}\label{eq def of fan}
 \fan \coloneqq\{\sigma_{K,J} \mid K,J\subseteq I, \ K\cap J=\emptyset\}.
\end{align}
}
\end{definition}

\vspace{10pt}

It is known that $\fan$ is a simplicial projective fan on $\mathfrak{t}_{\R}=\CWL\otimes_{\Z}\R$ (\cite{abe2023peterson,blume2015toric})  so that it defines a simplicial projective toric variety. 

\begin{definition}
\textnormal{
We denote by $X(\fan)$ the simplicial projective toric variety 
associated to the fan $\fan$ (on $\mathfrak{t}_{\R}$ with the coweight lattice $\CWL$) defined in \eqref{eq def of fan}.
}
\end{definition}

\vspace{10pt}

By definition, we have $\dim_{\C}X(\fan)=|I|=n$ (which agrees with $\dim_{\C}Y$).
It is worth noting that when we compare the geometry of $X(\fan)$ to that of $Y$, it is convenient to use the following notation; for $K\subseteq J\subseteq I$, we set
\begin{align}\label{eq def of new cone}
 \tau_{K,J} \coloneqq \text{cone}( \{-\alpha^{\vee}_i \mid i\in K\}\cup\{\varpi^{\vee}_i \mid i\notin J \} ) (=\sigma_{K,J^c}).
\end{align}
We then have $\fan = \{\tau_{K,J} \mid K\subseteq J\subseteq I\}$ since the condition $K\subseteq J$ is equivalent to $K\cap J^c=\emptyset$.
In subsequent sections, we review some basic facts from \cite[Sect.~3]{abe2023peterson}.

\vspace{10pt}

\subsection{Homogeneous coordinates of $X(\Sigma)$}

We now give a description of $X(\Sigma)$ in terms of its homogeneous coordinates.
We set $\C^{2I}$ as follows:
\begin{align*}
 \C^{2I} \coloneqq 
 \{ (x_1,\ldots,x_{\rkg};y_1,\ldots,y_{\rkg}) \in \C^{2\rkg} \mid x_i,y_i\in\C \ (i\in I) \}.
\end{align*}
The maximal torus $T$ acts linearly on $\C^{2I}$ through the weights $(\varpi_1,\ldots,\varpi_{\rkg},\alpha_1,\ldots,\alpha_{\rkg})$. 
Namely, we set
\begin{align}\label{eq def of T-action on C2r}
 t\cdot (x_1,\ldots,x_{\rkg};y_1,\ldots,y_{\rkg}) 
 \coloneqq (\varpi_1(t)x_1,\ldots,\varpi_{\rkg}(t)x_{\rkg}; \alpha_1(t)y_1,\ldots,\alpha_{\rkg}(t)y_{\rkg})
\end{align}
for $t\in T$ and $(x_1,\ldots,x_{\rkg};y_1,\ldots,y_{\rkg})\in \C^{2I}$. 
Let $E\subset \C^{2I}$ be a subset whose complement is given by
\begin{align}\label{eq def of C2I - E}
 \C^{2I}-E = \{ (x_1,\ldots,x_{\rkg}; y_1,\ldots,y_{\rkg})\in\C^{2I} \mid  (x_i,y_i)\ne(0,0) \ (i\in I) \}.
\end{align}
It is clear that the linear $T$-action on $\C^{2I}$ defined above preserves the subset $\C^{2I}-E$.
We now consider the quotient space 
\begin{align*}
 (\C^{2I}-E)/T.
\end{align*}
It admits a natural action of the quotient torus $T/Z_G$, where $Z_G$ is the center of $G$. More precisely, the torus $T/Z_G$ acts on $(\C^{2I}-E)/T$ by setting
\begin{align*}
 [t]\cdot [x_1,\ldots,x_{\rkg};y_1,\ldots,y_{\rkg}]
 \coloneqq [x_1,\ldots,x_{\rkg}; \alpha_1(t)y_1,\ldots,\alpha_{\rkg}(t)y_{\rkg}]
\end{align*}
for $[t]\in T/Z_G$ and $[x_1,\ldots,x_{\rkg};y_1,\ldots,y_{\rkg}]\in (\C^{2I}-E)/T$. This action is well-defined since we have $\alpha_i(t)=1$ $(i\in I)$ for all $t\in Z_G$.

In \cite{abe2023peterson}, it was shown that $(\C^{2I}-E)/T$ with the $T/Z_G$-action defined here is the quotient presentation of $X(\Sigma)$. Namely, the $2I$-tuple $(x_1,\ldots,x_{\rkg}; y_1,\ldots,y_{\rkg})$ on $\C^{2I}$ is the homogeneous coordinates of $X(\Sigma)$ (\cite[Chap.~5]{cox2024toric}). 
In this description, the torus invariant irreducible Weil divisors corresponding to the rays generated by $-\alpha^{\vee}_i$ and $\varpi^{\vee}_i$ are given by the equations $x_i=0$ and $y_i=0$, respectively. 

For the rest of this paper, we identify the toric variety $X(\fan)$ and $(\C^{2I}-E)/T$ with this $T/Z_G$-action.

\vspace{10pt}

\subsection{Orbit stratification of $X(\fan)$}

For simplicity, we write elements of $\C^{2I}$ and $X(\fan)(=(\C^{2I}-E)/T)$ by $(x;y)$ and $[x;y]$, respectively.
Having the definition \eqref{eq def of new cone} of the cone $\tau_{K,J}$ in mind, we consider the following subsets of $X(\fan)$.
For $K\subseteq J\subseteq I$, we define $\X{K,J}\subseteq X(\fan)$ by \vspace{5pt}
\begin{equation}\label{eq definition of X sigma IKJ}
\X{K,J}\coloneqq\left\{[x;y]\in X(\fan) \ \left| \ \begin{cases}
x_i=0\quad (i\in K) \\ 
x_i\neq 0\quad (i\in I-K) 
\end{cases}
\hspace{-10pt},\hspace{5pt} 
\begin{cases}
y_i=0\quad (i\in I-J) \\ 
y_i\neq 0\quad (i\in J) 
\end{cases}
\hspace{-5pt}\right\} \right. .\vspace{5pt}
\end{equation}
Note that the right hand side is the empty set unless $K\subseteq J$ because of the definition of $E$ (see \eqref{eq def of C2I - E}). 
Each $\X{K,J}$ is the torus orbit of $X(\Sigma)$ corresponding to the cone $\tau_{K,J}$. This follows from the correspondence between the equation $x_i=0$ (resp.\ $y_i=0$) and the ray generated by $-\alpha^{\vee}_i$ (resp. $\varpi^{\vee}_i$). For a detail argument, see \cite[Sect.~3.3]{abe2025totally}. Thus, the torus orbit stratification of $X(\fan)$ is given by
\begin{align}\label{eq X oribt decomp}
X(\fan) = \bigsqcup_{K\subseteq J \subseteq I} \X{K,J} .
\end{align}

To give an effective description of $\X{K,J}$, let us recall some properties of the subtorus $T_J\subseteq T$ defined in Section~\ref{subsec Richardson strata of Y}.
We have an isomorphism
\begin{align}\label{eq alpha J surj 2}
 T_J \rightarrow (\C^{\times})^J
 \quad ; \quad
 t\mapsto (\varpi_i(t))_{i\in J}
\end{align} 
and a surjective homomorphism
\begin{align}\label{eq alpha J surj}
 T_{J}\rightarrow (\C^{\times})^{J}
 \quad ; \quad
 t\mapsto (\alpha_i(t))_{i\in J}.
\end{align} 
We also know that
\begin{align}\label{eq alpha J being 1}
 \varpi_i(T_{J}) = 1 \quad \text{for all $i\in I-J$}
\end{align}
since $T_J$ is generated by $\alpha^{\vee}_i(\C^{\times})$ for $i\in J$ (Section~\ref{subsec Richardson strata of Y}).

\begin{proposition}\label{prop torus orbit stratum}
For $K\subseteq J\subseteq I$, we have 
\begin{equation*}
\X{K,J}=\left\{[x;y] \in X(\fan) 
\ \left| \ \begin{cases}
x_i=0\quad (i\in K) \\ 
x_i\ne0\quad (i\in J-K) \\ 
x_i=1\quad (i\in I-J) 
\end{cases}
\hspace{-10pt},\hspace{5pt} 
\begin{cases}
y_i=0\quad (i\in I-J) \\
y_i=1\quad (i\in J) 
\end{cases} \hspace{-10pt} \right\} \right. .
\end{equation*}
\end{proposition}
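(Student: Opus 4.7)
The containment from right to left is immediate from~\eqref{eq definition of X sigma IKJ}, since the specified values only strengthen the zero/nonzero pattern that defines $\X{K,J}$. For the reverse inclusion, I start with an arbitrary $[x;y]\in\X{K,J}$ and use the $T$-action~\eqref{eq def of T-action on C2r} on the homogeneous coordinates to rescale a given representative $(x;y)$ into the normalized form of the statement. Since $x_i=0$ for $i\in K$ and $y_i=0$ for $i\in I-J$ are preserved under any rescaling, and since the nonzero values $x_i$ for $i\in J-K$ are unconstrained by the target form, the task reduces to finding $t\in T$ such that
\begin{align*}
\varpi_i(t)\,x_i=1 \quad (i\in I-J)\qquad\text{and}\qquad \alpha_i(t)\,y_i=1 \quad (i\in J).
\end{align*}

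It is therefore enough to show that the homomorphism
\begin{align*}
\Phi\colon T \longrightarrow (\C^{\times})^{I-J}\times(\C^{\times})^{J},\qquad t\longmapsto \bigl((\varpi_i(t))_{i\in I-J},\,(\alpha_i(t))_{i\in J}\bigr)
\end{align*}
is surjective. By duality of algebraic tori, this is equivalent to the injectivity of the induced map on character lattices $\Phi^{*}\colon \Z^{I-J}\oplus\Z^{J}\to\WL$, which sends standard basis vectors to $\varpi_i$ (for $i\in I-J$) and to $\alpha_i$ (for $i\in J$); here I use that $X^{*}(T)=\WL$ because $G$ is simply connected. Expanding $\alpha_i=\sum_{k\in I}\langle\alpha_i,\alpha_k^{\vee}\rangle\varpi_k$ in the fundamental weight basis and reading off the coefficient of $\varpi_k$ for $k\in J$ shows that the $J$-part of any element of $\ker\Phi^{*}$ must be annihilated by the principal $J\times J$-submatrix of the Cartan matrix; the $(I-J)$-part then vanishes automatically by inspecting the coefficients for $k\in I-J$. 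Since the $J\times J$-Cartan submatrix is nondegenerate (its associated Weyl group $W_J$ is finite), $\Phi^{*}$ is injective and hence $\Phi$ is surjective.

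The main step is precisely this surjectivity claim, a clean linear-algebra fact about the Cartan matrix; everything else is bookkeeping about which coordinates are rigid under the $T$-rescaling. A variant that stays closer to the lemmas already prepared is to proceed in two stages: first apply~\eqref{eq alpha J surj} to choose $t_1\in T_J$ satisfying $\alpha_i(t_1)\,y_i=1$ for $i\in J$, noting via~\eqref{eq alpha J being 1} that the coordinates $x_i$ for $i\in I-J$ are left unchanged; then apply the same duality argument on the subtorus $\bigcap_{i\in J}\ker\alpha_i\subseteq T$ to produce a second element $t_2$ normalizing the remaining coordinates $x_i$ for $i\in I-J$ without disturbing the first normalization.
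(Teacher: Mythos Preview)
Your proof is correct. The paper proceeds in two stages rather than one: first act by $T_{I-J}$ (via the isomorphism~\eqref{eq alpha J surj 2}) to set $x_i=1$ for $i\in I-J$, then act by $T_J$ (via the surjection~\eqref{eq alpha J surj}) to set $y_i=1$ for $i\in J$; the point of this ordering is that~\eqref{eq alpha J being 1} guarantees the second step leaves the first normalization intact. The Cartan-submatrix nondegeneracy you isolate is precisely what underlies the surjectivity~\eqref{eq alpha J surj}, so the mathematical content is the same. Your one-step argument via $\Phi^{*}$ is slightly more direct and makes the linear algebra explicit; the paper's two-step version has the advantage of reusing the subtorus facts about $T_J$ that are already recorded and will be needed again in the nonnegative setting (Lemma~\ref{eq alpha surjectivity} and Proposition~\ref{prop nonnengative stratum}). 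Your closing variant also works, but because you take the steps in the opposite order you are forced to replace $T_{I-J}$ by the subtorus $\bigcap_{i\in J}\ker\alpha_i$ in the second stage.
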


\begin{proof}
The inclusion $\supseteq$ is obvious due to \eqref{eq definition of X sigma IKJ}.
To show that the opposite inclusion holds, recall that each element $[x;y]$ of $X(\Sigma)$ is a $T$-orbit in $\C^{2I}-E$ for the action given by the weights $\varpi_1,\ldots,\varpi_{\rkg},\alpha_1,\ldots,\alpha_{\rkg}$ (see \eqref{eq def of T-action on C2r}), and $(x;y)$ is a representative of that $T$-orbit.
Now, let $[x;y]$ be an element of $\X{K,J}$. 
Then, by \eqref{eq definition of X sigma IKJ}, we have $x_i\ne0$ for $i\in I-K$.
Note that this index set contains $I-J$.
Hence, using \eqref{eq alpha J surj 2} for $T_{I-J}\subseteq T$, we may assume that
\begin{align}\label{eq xi = 1}
 x_i = 1 \quad \text{for all $i\in I-J$}.
\end{align}
Moreover, the surjectivity of \eqref{eq alpha J surj} for $T_J\subseteq T$ implies that we may additionally assume that $y_i=1$ for all $i\in J$ while keeping \eqref{eq xi = 1} because of the equalities \eqref{eq alpha J being 1}.
This implies that a representative $(x;y)$ of the given $[x;y]$ can be chosen as indicated in the right hand side of the desired equality. This proves the opposite inclusion.
\end{proof}

This result is compatible with the fact $\codim_{\R}\tau_{K,J}=|J|-|K|$ (\cite[Sect.~3]{abe2025totally}).

\vspace{10pt}

\subsection{Nonnegative part of $X(\fan)$}\label{sec nonnegative part of X}

In general, for an arbitrary (normal) toric variety $X$ over $\C$, the nonnegative part $X_{\ge 0}$ is defined as the set of ``$\R_{\ge 0}$-valued points" of $X$. When $X$ does not have torus factors, it can be obtained by requiring nonnegative entries in the homogeneous coordinates of $X$ (\cite[Proposition 12.2.1]{cox2024toric}). More details on $X_{\ge 0}$ can be found in Section~12.2 of loc.\ cit.
In this section, we study the nonnegative part of the toric orbifold $X(\fan)$ defined in the previous section. 

Since $X(\Sigma)$ is projective, it does not have torus factors, and hence its nonnegative part is given by
\begin{align*}
X(\fan)_{\ge0} = \{[x;y]\in X(\fan) \mid x_i, y_i\ge0\ \text{for $i\in I$}\}.
\end{align*}
Namely, an element of $X(\fan)$ belongs to $X(\fan)_{\ge0}$ if and only if it can be represented by an element with nonnegative entries in the homogeneous coordinate.

To obtain a stratification of $X(\fan)_{\ge0}$ from the orbit stratification, we set 
\begin{align*}
\Xp{K,J} \coloneqq \X{K,J}\cap X(\fan)_{\ge 0}
\quad \text{for $K\subseteq J \subseteq I$.}
\end{align*}
We call each of them a \textit{nonnegative torus orbit} of $X(\Sigma)_{\ge0}$.
We obtain from \eqref{eq X oribt decomp} that
\begin{align}\label{eq decomp of X nonnegative KJ}
X(\fan)_{\ge0} = \bigsqcup_{K\subseteq J \subseteq I} \Xp{K,J} .
\end{align}

To give a description of $\Xp{K,J}$ in the same manner as Proposition~\ref{prop torus orbit stratum}, we need the following lemma.
Let $(T_J)_{>0}$ be the subgroup of $T_J$ generated by $\chi(z)$ for $\chi\in\Hom(\C^{\times},T_J)$ and $z\in\R_{>0}$ (cf.\ Section~\ref{TNN G and G/B}).
Since we have $\Hom(\C^{\times},T_J)=\oplus_{i\in J}\Z\alpha^{\vee}_i$, it follows that $(T_J)_{>0}$ is generated by $\alpha^{\vee}_i(z)$ for $i\in I$ and $z\in\R_{>0}$.

\begin{lemma}\label{eq alpha surjectivity}
For $t\in (T_J)_{>0}$, we have $\varpi_i(t),\alpha_i(t)\in\R_{>0}$ for all $i\in I$.
Moreover, we have isomorphisms
\begin{align*}
 &(T_J)_{>0} \rightarrow (\R_{>0})^J
 \quad ; \quad
 t\mapsto (\varpi_i(t))_{i\in J} ,\\
 &(T_{J})_{>0}\rightarrow (\R_{>0})^{J}
 \quad ; \quad
 t\mapsto (\alpha_i(t))_{i\in J}.
\end{align*} 
\end{lemma}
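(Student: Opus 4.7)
The plan is to parametrize $(T_J)_{>0}$ explicitly by $(\R_{>0})^J$ using the fact that $\Hom(\C^{\times}, T_J) = \bigoplus_{j \in J} \Z\alpha^\vee_j$, and then to evaluate $\varpi_i$ and $\alpha_i$ via the defining duality relations $\langle \varpi_i,\alpha^\vee_j\rangle = \delta_{ij}$ and $\langle \alpha_i,\alpha^\vee_j\rangle = a_{ij}$ (the Cartan integers). Both assertions will then fall out.

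First, I would verify that every $t \in (T_J)_{>0}$ can be written in the form
\[
 t = \prod_{j\in J}\alpha^\vee_j(z_j) \qquad (z_j\in \R_{>0}).
\]
Indeed, $(T_J)_{>0}$ is generated as a group by the elements $\chi(z)$ with $\chi\in\Hom(\C^{\times},T_J)$ and $z\in\R_{>0}$; since $\Hom(\C^{\times},T_J)=\bigoplus_{j\in J}\Z\alpha^\vee_j$ and $T_J$ is abelian, using $\alpha^\vee_j(z_1)\alpha^\vee_j(z_2)=\alpha^\vee_j(z_1 z_2)$ and $\alpha^\vee_j(z)^{-1}=\alpha^\vee_j(z^{-1})$ we can collapse any product of generators into the displayed form. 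Evaluating on such a $t$ gives, for every $i\in I$,
\[
 \varpi_i(t) = \prod_{j\in J} z_j^{\langle\varpi_i,\alpha^\vee_j\rangle} = \prod_{j\in J}z_j^{\delta_{ij}}, \qquad \alpha_i(t) = \prod_{j\in J} z_j^{a_{ij}},
\]
both of which are positive real numbers. This settles the first assertion of the lemma.

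For the two isomorphism claims, I would read them off from the above computation. The first map $t\mapsto (\varpi_i(t))_{i\in J}$ becomes, under the parametrization $(z_j)_{j\in J}\mapsto \prod_{j\in J}\alpha^\vee_j(z_j)$, simply the identity $(z_j)_{j\in J}\mapsto (z_j)_{j\in J}$; in particular this shows simultaneously that the parametrization is a bijection $(\R_{>0})^J\xrightarrow{\sim}(T_J)_{>0}$ and that the first map is an isomorphism. For the second map $t\mapsto (\alpha_i(t))_{i\in J}$, passing to logarithms converts it to the linear map $\R^J\to \R^J$ represented by the submatrix $(a_{ij})_{i,j\in J}$, which is precisely the Cartan matrix of the semisimple group $G_J$. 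Since Cartan matrices of semisimple Lie algebras are nonsingular, this linear map is a bijection, and exponentiating yields the desired isomorphism $(T_J)_{>0}\xrightarrow{\sim}(\R_{>0})^J$.

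No step poses a real obstacle; the whole argument is a direct unwinding of the definitions, with the one conceptual input being the classical nonsingularity of the Cartan matrix of $G_J$, which is what powers the second isomorphism.
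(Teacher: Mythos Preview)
Your proof is correct and follows essentially the same approach as the paper's: parametrize $(T_J)_{>0}$ via the simple coroots $\alpha^\vee_j$ ($j\in J$), use the duality $\langle\varpi_i,\alpha^\vee_j\rangle=\delta_{ij}$ to identify the first map with the identity, and handle the second map by passing to logarithms and invoking the nonsingularity of the Cartan matrix. The only cosmetic difference is that the paper first reduces to the case $J=I$ (by applying the result to the semisimple group $G_J$) and then runs the argument, whereas you work directly with general $J$; the content is identical.
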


\vspace{10pt}

These maps are totally positive analogues of \eqref{eq alpha J surj 2} and \eqref{eq alpha J surj}.
The first claim of this lemma simply follows since $(T_J)_{>0}$ is generated by $\alpha^{\vee}_i(z)$ $i\in I$ and $z\in\R_{>0}$ as we saw above.
We give a proof of the second claim in Appendix.

\begin{proposition}\label{prop nonnengative stratum}
For $K\subseteq J\subseteq I$, we have 
\begin{equation*}
\Xp{K,J}=\left\{[x;y] \in X(\fan) 
\ \left| \ \begin{cases}
x_i=0\quad (i\in K) \\ 
x_i>0\quad (i\in J-K) \\ 
x_i=1\quad (i\in I-J) 
\end{cases}
\hspace{-10pt},\hspace{5pt} 
\begin{cases}
y_i=0\quad (i\in I-J) \\
y_i=1\quad (i\in J) 
\end{cases} \hspace{-10pt} \right\} \right. .
\end{equation*}
\end{proposition}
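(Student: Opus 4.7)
The plan is to mimic the proof of Proposition~\ref{prop torus orbit stratum}, replacing every appeal to the isomorphism/surjection \eqref{eq alpha J surj 2}--\eqref{eq alpha J surj} by its positive real analogue provided by Lemma~\ref{eq alpha surjectivity}, so that all rescalings used during normalization stay inside $(T_J)_{>0}$ and hence preserve nonnegativity of the homogeneous coordinates. The inclusion $\supseteq$ is immediate: any $[x;y]$ of the prescribed form has nonnegative entries (so lies in $X(\fan)_{\ge0}$) and lies in $\X{K,J}$ by Proposition~\ref{prop torus orbit stratum}, so lies in their intersection $\Xp{K,J}$.

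For the reverse inclusion, take $[x;y]\in\Xp{K,J}$. Since $X(\fan)$ is projective (hence has no torus factor) and $[x;y]\in X(\fan)_{\ge0}$, I would choose a representative $(x;y)\in\C^{2I}-E$ with all $x_i,y_i\in\R_{\ge0}$. Combined with $[x;y]\in\X{K,J}$, the defining conditions \eqref{eq definition of X sigma IKJ} then force $x_i=0$ for $i\in K$, $x_i>0$ for $i\in I-K$ (nonzero and nonnegative), $y_i=0$ for $i\in I-J$, and $y_i>0$ for $i\in J$. Now I would rescale in two steps. First, using the first isomorphism of Lemma~\ref{eq alpha surjectivity} for the subtorus $T_{I-J}$, there exists $t\in(T_{I-J})_{>0}$ with $\varpi_i(t)=1/x_i$ for $i\in I-J$; acting on $(x;y)$ via \eqref{eq def of T-action on C2r} by this $t$ multiplies each coordinate by a positive real (by the first claim of Lemma~\ref{eq alpha surjectivity}), so preserves all earlier sign patterns and yields $x_i=1$ for $i\in I-J$. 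Second, using the second isomorphism of Lemma~\ref{eq alpha surjectivity} for $T_J$, there exists $t'\in(T_J)_{>0}$ with $\alpha_i(t')=1/y_i$ for $i\in J$, producing $y_i=1$ for $i\in J$.

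The delicate point is to check that the second rescaling does not spoil the first: this is exactly the content of \eqref{eq alpha J being 1}, which says $\varpi_i(T_J)=1$ for $i\in I-J$, so the normalized coordinates $x_i$ with $i\in I-J$ are fixed by $t'$. Positivity of $x_i$ for $i\in J-K$ is also preserved because $\varpi_i(t')\in\R_{>0}$ by Lemma~\ref{eq alpha surjectivity}, and zero entries remain zero under any linear torus action. The resulting representative therefore has precisely the form prescribed on the right-hand side. The only nontrivial ingredient is Lemma~\ref{eq alpha surjectivity} itself (the totally positive analogue of \eqref{eq alpha J surj 2}--\eqref{eq alpha J surj}), whose proof is deferred to the appendix; granted it, the statement follows by a routine positivity-preserving adaptation of Proposition~\ref{prop torus orbit stratum}.
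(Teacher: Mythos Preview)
Your argument is correct and follows essentially the same approach as the paper: both prove $\supseteq$ trivially and for $\subseteq$ pick a nonnegative representative and then rerun the two-step normalization from Proposition~\ref{prop torus orbit stratum}, replacing the surjections \eqref{eq alpha J surj 2}--\eqref{eq alpha J surj} for $T_{I-J}$ and $T_J$ by their positive-real analogues from Lemma~\ref{eq alpha surjectivity}. Your write-up is simply more explicit about the sign-preservation and the use of \eqref{eq alpha J being 1} that the paper leaves implicit.
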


\begin{proof}
The inclusion $\supseteq$ is obvious.
For the opposite inclusion, we take an element $[x;y]\in \Xp{K,J}=\X{K,J}\cap X(\fan)_{\ge 0}$ of the form given in \eqref{eq definition of X sigma IKJ}.
Since $[x;y]\in X(\fan)_{\ge 0}$, we have $[x;y]=[x';y']$ for some $(x';y')\in \C^{2I}-E$ with nonnegative entries.
Because of the definition of our equivalence class, we have
\begin{equation*}
 x_i\ne0 \ \Longleftrightarrow \ x'_i\ne0
 \qquad \text{and} \qquad
 y_i\ne0 \ \Longleftrightarrow \ y'_i\ne0.
\end{equation*}
Since we have Lemma~\ref{eq alpha surjectivity} (instead of \eqref{eq alpha J surj 2} and \eqref{eq alpha J surj}), the proof of Proposition~\ref{prop torus orbit stratum} applying to $[x';y']$ works to obtain the desired inclusion by replacing the roles of $T_{I-J}$ and $T_J$ to those of $(T_{I-J})_{>0}$ and $(T_J)_{>0}$, respectively.
\end{proof}

\vspace{20pt}

Motivated by Proposition~\ref{prop nonnengative stratum}, we consider the following definition; 
for $K\subseteq J\subseteq I$, we set
\begin{equation*}
(\R^{I})_{K,J;>0} \coloneqq
\left\{(x_1,\ldots,x_{\rkg}) \in \R^{I} 
\ \left| \ \begin{array}{l}
x_i=0\quad (i\in K) \\ 
x_i>0\quad (i\in J-K) \\
x_i=1\quad (i\in I-J) 
\end{array}
 \right\} \right. .
\end{equation*}
The next claim gives us a parametrization of the nonnegative part $\Xp{K,J}$.

\begin{proposition}\label{prop nonnengative stratum parametrization}
For $K\subseteq J\subseteq I$, the map
\begin{align*}
 (\R^{I})_{K,J;>0} \rightarrow \Xp{K,J}
 \quad ; \quad 
 (x_1,\ldots,x_{\rkg}) \mapsto [x_1,\ldots,x_{\rkg}\hspace{2pt};\delta_1,\ldots,\delta_{\rkg}]
\end{align*} 
is a bijection, where $(\delta_1,\ldots,\delta_{\rkg})\in\R^{I}$ is the element given by
\begin{align*}
\begin{cases}
 \delta_i=0 \quad (i\in I-J) \\
 \delta_i=1 \quad (i\in J) .
\end{cases}
\end{align*}
\end{proposition}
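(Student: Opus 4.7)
The plan is to verify the three standard ingredients: well-definedness of the map, surjectivity, and injectivity. Well-definedness amounts to observing that for $(x_1,\ldots,x_{\rkg})\in (\R^{I})_{K,J;>0}$ the tuple $(x;\delta)$ lies in $\C^{2I}\setminus E$ (for each $i\in I$ at least one of $x_i,\delta_i$ is nonzero, since $\delta_i=1$ for $i\in K\subseteq J$), all entries are real and nonnegative, and the zero/nonzero pattern matches that in Proposition~\ref{prop nonnengative stratum}; thus the class $[x;\delta]$ belongs to $\Xp{K,J}$. Surjectivity is immediate from Proposition~\ref{prop nonnengative stratum}, which furnishes, for any element of $\Xp{K,J}$, a representative of precisely the desired shape.

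The substantive step is injectivity. Suppose $[x;\delta]=[x';\delta]$ for $x,x'\in (\R^{I})_{K,J;>0}$. By definition of the homogeneous coordinates there exists $t\in T$ with $\varpi_i(t)x_i=x'_i$ and $\alpha_i(t)\delta_i=\delta_i$ for every $i\in I$. The $y$-equations give $\alpha_i(t)=1$ for $i\in J$, while the $x$-equations on $I-J$ (where $x_i=x'_i=1$) give $\varpi_i(t)=1$ for $i\in I-J$. Since $\{\varpi_i\}_{i\in I}$ is a $\Z$-basis of $\WL=\Hom(T,\C^{\times})$, the homomorphism $T\to (\C^{\times})^{I}$, $t\mapsto(\varpi_i(t))_{i\in I}$, is an isomorphism; consequently the subgroup cut out by $\varpi_i=1$ for $i\in I-J$ is exactly the subtorus $T_J$ (by the characterization of $T_J$ via $\varpi_j(T_J)=1$ for $j\in I-J$ recalled in Section~\ref{subsec Richardson strata of Y}), so $t\in T_J$.

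The main obstacle is now to rule out the residual finite kernel of $T_J\to (\C^{\times})^{J}$, $t\mapsto(\alpha_i(t))_{i\in J}$: the surjection in \eqref{eq alpha J surj} is in general only an isogeny (when the Cartan matrix restricted to $J$ has determinant $>1$), so $\alpha_i(t)=1$ for $i\in J$ does not, by itself, force $\varpi_j(t)=1$ for $j\in J$. Here the crucial extra input is the positivity of $x_j,x'_j$ for $j\in J-K$. Expanding $\alpha_i=\sum_{j\in J}c_{ji}\varpi_j$ on $T_J$, where $c_{ji}=\langle\alpha_i,\alpha^{\vee}_j\rangle$, the equations $\alpha_i(t)=1$ read
\[
 \prod_{j\in J}\varpi_j(t)^{c_{ji}}=1 \qquad (i\in J).
\]
Taking absolute values and then logarithms yields a real linear system in $\{\log|\varpi_j(t)|\}_{j\in J}$ whose coefficient matrix is the Cartan matrix $C_J=(c_{ji})_{i,j\in J}$; since $C_J$ is invertible, $|\varpi_j(t)|=1$ for every $j\in J$. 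For $j\in J-K$ we have $\varpi_j(t)=x'_j/x_j\in\R_{>0}$, which combined with $|\varpi_j(t)|=1$ forces $\varpi_j(t)=1$, hence $x_j=x'_j$. Together with the trivial cases $j\in K$ (where $x_j=x'_j=0$) and $j\in I-J$ (where $x_j=x'_j=1$), this yields $x=x'$ and completes the proof.
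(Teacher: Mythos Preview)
Your proof is correct and follows essentially the same route as the paper's. The only difference is in the justification of the key step $|\varpi_j(t)|=1$ for $j\in J$: the paper observes that $t\in T_J$ with $\alpha_j(t)=1$ for $j\in J$ forces $t\in Z_{G_J}$, which is finite since $G_J$ is semisimple, hence $t$ has finite order and $|\varpi_j(t)|=1$; you instead take absolute values and logarithms and invoke the invertibility of the restricted Cartan matrix $C_J$ directly. These are two phrasings of the same fact (finiteness of $Z_{G_J}$ is exactly nonsingularity of $C_J$), so the arguments are equivalent in substance.
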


\begin{proof}
The surjectivity is obvious by the previous proposition.
We prove the injectivity. Suppose that $(x_1,\ldots,x_{\rkg}), (x'_1,\ldots,x'_{\rkg})\in (\R^{I})_{K,J;>0}$ have the same image in $\Xp{K,J}$. Then there exists $t\in T$ such that
\begin{equation}\label{eq connecting t in T}
\begin{split}
 &x_i = \varpi_i(t) \cdot x'_i \quad (i\in I), \\
 &1 = \alpha_j(t) \cdot 1 \quad (j\in J).
\end{split}
\end{equation}
From the former condition, we obtain that $\varpi_i(t)=1$ for $i\in I-J$ since $x_i=x'_i=1$ for all such $i$.
This means that we have $t\in \cap_{i\in I-J}\Ker \varpi_i = T_J$, where the equality follows since $T_J$ is a $|J|$-dimensional torus generated by $\alpha^{\vee}_j(\C^{\times})\subseteq T$ for $j\in J$ (Section~\ref{subsec Richardson strata of Y}). This and the latter condition in \eqref{eq connecting t in T} together imply that $t\in Z_{G_J}$. 
Since $G_J$ is semisimple, its center $Z_{G_J}$ is a finite group. Hence, $t$ has a finite order, and so are the values $\varpi_i(t)$ in $\C^{\times}$. Therefore, we have $|\varpi_i(t)|=1$ for $i\in I$.
Now, the former condition in \eqref{eq connecting t in T} with $x_i,x'_i\ge0$ imply that
\begin{align*}
 x_i = x'_i \quad (i\in I).
\end{align*}
In fact, if $x_i=0$ then we have $x_i=x'_i=0$, and if $x_i\ne0$ then both of $x_i$ and $x'_i$ are positive which implies that $\varpi_i(t)=1$ and hence $x_i = x'_i$. This completes the proof.
\end{proof}

\vspace{10pt}

\subsection{Strongly dominant weight polytope}\label{subsec SDWP}

Let $\lambda\in\WL=\Hom(T,\C^{\times})$ be a regular dominant weight. Namely, we have $\lambda=\sum_{i\in I} a_i \varpi_i$ for some positive coefficients $a_i\in\R_{>0}$ $(i\in I)$.
Regarding $\lambda$ as an element of $\mathfrak{t}^*_{\R}\coloneqq \WL\otimes_{\Z}\R$, the weight polytope associated to $\lambda$ is the convex hull of the $W$-orbit of $\lambda$ :
\begin{align*}
 \text{Conv}(W\cdot \lambda) \subseteq \mathfrak{t}^*_{\R}.
\end{align*}
For its relations to the irreducible representation $V_{\lambda}$ with highest weight $\lambda$, see e.g.\ \cite[Sect.~10.1]{hall2015gtm222}.
According to \cite{burrull2023strongly}, we define the \textit{strongly dominant weight polytope} $P^{\lambda}$ by the intersection
\begin{align*}
 P^{\lambda} \coloneqq \text{Conv}(W\cdot \lambda) \cap \sigma_+ \subseteq \mathfrak{t}^*_{\R},
\end{align*}
where
$
 \sigma_+ \coloneqq \{ a_1 \varpi_1 + \cdots + a_{\rkg} \varpi_{\rkg} \mid a_i\ge0 \ (i\in I) \}
$
is the (closed) dominant Weyl chamber in $\mathfrak{t}^*_{\R}$.
In loc.\ cit., $P^{\lambda}$ is proved to be a rational\footnote{This means that the vertices of $P^{\lambda}$ lie in $\WL\otimes_{\Z}\Q$.} combinatorial cube of dimension $n(=|I|)$. Topologically, this is equivalent to the existence of a (piecewise linear) homeomorphism between $P^\lambda$ and the standard $n(=|I|)$-cube, which restricts to homeomorphisms between their facets (and hence all the faces), e.g.\ \cite[Sect.\ 2.2]{ziegler1995lectures}. In particular, its combinatorial structure is independent of the choice of the regular dominant weight $\lambda$.
We review some facts about $P^{\lambda}$ from loc.\ cit.
We set 
\begin{align*}
 &H_i \coloneqq \{ \mu \in \mathfrak{t}^*_{\R} \mid \alpha^{\vee}_i(\mu)= 0 \}, \\ 
 &H^{\lambda}_i \coloneqq \{ \lambda - \mu \in \mathfrak{t}^*_{\R} \mid \varpi^{\vee}_i(\mu)= 0 \}.
\end{align*}
Then the set of facets of $P^{\lambda}$ are given by $P^{\lambda}\cap H_i$ and $P^{\lambda}\cap H^{\lambda}_i$ for $i\in I$ with outward normal vectors $-\alpha^{\vee}_i$ $(i\in I)$ and $\varpi^{\vee}_i$ $(i\in I)$, respectively.
Moreover, for $K, J\subseteq I$, the intersection
\begin{align}\label{eq faces of SDWP}
 F_{K,J} \coloneqq P^{\lambda}\cap \bigcap_{j\in K} H_i \cap \bigcap_{j\notin J} H^{\lambda}_i
\end{align}
is non-empty if and only if $K\subseteq J$ (equivalently, $K\cap (I-J)=\emptyset$). 
See \cite[Sect.~2]{burrull2023strongly} for details.
This means that the normal fan of $P^{\lambda}$ is precisely our fan $\Sigma$ given in Definition~\ref{def fan}.

Since $\Sigma$ is the normal fan of $P^{\lambda}$ in $\mathfrak{t}^*_{\R}$, we have the moment map
\begin{align*}
 \mu \colon X(\Sigma) \rightarrow \mathfrak{t}^*_{\R},
\end{align*}
which restricts to a homeomorphism
\begin{align}\label{eq moment map on NN}
 \mu_{\ge0} \colon X(\Sigma)_{\ge0} \rightarrow P^{\lambda}.
\end{align}
The latter map sends each nonnegative torus orbit $\Xp{K,J}$ to the relative interior of the corresponding face $F_{K,J}$ of $P^{\lambda}$. See \cite[Sect.~4.2]{fulton1993introduction} for details.
In this sense, the nonnegative part $X(\Sigma)_{\ge0}$ is naturally identified with the strongly dominant weight polytope $P^{\lambda}$.

\vspace{30pt}

\section{The map $\Psi_{\ge0} \colon Y_{\ge0}\rightarrow X(\Sigma)_{\ge0}$} \label{sec map}

In \cite{abe2023peterson}, the first and the third authors of this paper constructed a distinguished morphism  $\Psi\colon Y\rightarrow X(\Sigma)$. We use this map to establish a connection between $Y_{\ge0}$ and $X(\Sigma)_{\ge0}$.

\subsection{Construction of $\Psi \colon Y\rightarrow X(\Sigma)$}

In this section, we review the construction of the morphism $\Psi\colon Y\rightarrow X(\Sigma)$ from \cite{abe2023peterson}.
Let $i\in I$. Recall that we have
\begin{align*}
 \Delta_{\varpi_i} \colon G \rightarrow \C
 \quad ; \quad
 g \mapsto (gv_{\varpi_i})_{\varpi_i} ,
\end{align*} 
where $(gv_{\varpi_i})_{\varpi_i}$ denotes the coefficient of $v_{\varpi_i}$ for the weight decomposition of $gv_{\varpi_i}$ in the fundamental representation $V_{\varpi_i}$.
Let us consider another function
\begin{align*}
 \qp{i} \colon G \rightarrow \C
 \quad ; \quad
 g \mapsto - (\Ad_{g^{-1}}\nil)_{-\alpha_i},
\end{align*}
where we have $\nil=\sum_{i\in I}\nil_i$, and $(\Ad_{g^{-1}}\nil)_{-\alpha_i}$ denotes the coefficient of the root vector $f_i\in \mathfrak{g}_{-\alpha_i}$ for the root decomposition of the element $\Ad_{g^{-1}}\nil\in\mathfrak{g}$. 

\begin{definition}
We denote by $\Psi$ the map from $Y$ to $X(\Sigma)$ defined by
\begin{equation*}
 \Psi\colon Y\rightarrow X(\Sigma)
 \quad ; \quad
 gB \mapsto [\Delta_{\varpi_1}(g),\ldots,\Delta_{\varpi_{\rkg}}(g);\qp{1}(g),\ldots,\qp{n}(g)].
\end{equation*}
\end{definition}

\vspace{5pt}

The map $\Psi$ is well-defined essentially because of the $T$-action given in \eqref{eq def of T-action on C2r} to construct $X(\Sigma)=(\C^{2I}-E)/T$. Moreover, $\Psi$ is a morphism of algebraic varieties. See \cite[Sect.\ 6.1]{abe2023peterson} for detail proofs.

\vspace{10pt}

\subsection{Restriction of $\Psi$ to $Y_{\ge0}$}

The goal of this section is to prove that $\Psi$ restricts to a map 
\begin{equation*}
 Y_{\ge0}\rightarrow X(\Sigma)_{\ge0}.
\end{equation*}
We begin with the following lemma. Recall that we have chosen a representative $\dot{w}\in N_G(T)$ given in \eqref{eq choice of representative 2} for each $w\in W$.

\begin{lemma}\label{lemm w0e=-f first}
For $i\in I$, we have 
\begin{equation*}
 \Ad_{\dot{w}_0^{-1}} e_i = - f_{i^*},
\end{equation*}
where $*\colon I \rightarrow I$ is the involution defined by the condition $w_0\alpha_i=-\alpha_{i^*}$ for all $i\in I$.
\end{lemma}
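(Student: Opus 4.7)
My plan is to reduce the identity to a direct $\mathfrak{sl}_2(\mathbb{C})$-computation by peeling off $\dot{s}_{i^*}$ from the right end of $\dot{w}_0$, with the intermediate factor handled by the standard Tits--section property.

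First, since $w_0$ is an involution, $\Ad_{\dot{w}_0^{-1}}(e_i)$ lies in $\mathfrak{g}_{w_0\alpha_i}=\mathfrak{g}_{-\alpha_{i^*}}=\mathbb{C}f_{i^*}$, so \emph{a priori} $\Ad_{\dot{w}_0^{-1}}(e_i)=c\cdot f_{i^*}$ for some $c\in\mathbb{C}^{\times}$, and the task is to pin down $c=-1$. Set $w\coloneqq w_0 s_{i^*}$. Because $w_0\alpha_{i^*}=-\alpha_i<0$, the reflection $s_{i^*}$ is a right descent of $w_0$, so $\ell(w)=\ell(w_0)-1$ and $w_0=w\cdot s_{i^*}$ is reduced; by the definition of the Tits section in \eqref{eq choice of representative 1}--\eqref{eq choice of representative 2}, this gives $\dot{w}_0=\dot{w}\,\dot{s}_{i^*}$, whence
\[
\Ad_{\dot{w}_0^{-1}}(e_i)\;=\;\Ad_{\dot{s}_{i^*}^{-1}}\!\bigl(\Ad_{\dot{w}^{-1}}(e_i)\bigr).
\]

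Next I would establish $\Ad_{\dot{w}^{-1}}(e_i)=e_{i^*}$ (with no stray scalar). A direct computation gives $w\alpha_{i^*}=w_0 s_{i^*}\alpha_{i^*}=-w_0\alpha_{i^*}=\alpha_i$, so $w$ sends the simple root $\alpha_{i^*}$ to the simple root $\alpha_i$. The standard Tits--section property---\emph{if $v\in W$ satisfies $v\alpha_k=\alpha_\ell$ for $\alpha_k,\alpha_\ell$ simple, then $\Ad_{\dot{v}}(e_k)=e_\ell$}---applied to $v=w$ gives $\Ad_{\dot{w}}(e_{i^*})=e_i$, and composing with $\Ad_{\dot{w}^{-1}}$ on the left yields the desired identity $\Ad_{\dot{w}^{-1}}(e_i)=e_{i^*}$. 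It remains to compute $\Ad_{\dot{s}_{i^*}^{-1}}(e_{i^*})$ inside the $\mathfrak{sl}_2(\mathbb{C})$-triple $\{e_{i^*},h_{i^*},f_{i^*}\}$: with the explicit form $\dot{s}_{i^*}=y_{i^*}(1)x_{i^*}(-1)y_{i^*}(1)$, the element $\dot{s}_{i^*}$ corresponds to $\bigl(\begin{smallmatrix}0 & -1\\ 1 & 0\end{smallmatrix}\bigr)$ in the associated $SL_2(\mathbb{C})$, and a direct $2\times 2$ computation gives $\Ad_{\dot{s}_{i^*}^{-1}}(e_{i^*})=-f_{i^*}$. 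Combining these steps yields $\Ad_{\dot{w}_0^{-1}}(e_i)=-f_{i^*}$, as claimed.

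The main obstacle is the middle step. A naive induction on $\ell(v)$, peeling a left descent $s_m$ off $v$, fails to close because the intermediate root $s_m\alpha_\ell$ is simple only when $\langle\alpha_\ell,\alpha_m^\vee\rangle=0$, and one cannot always arrange this (for instance, in $S_4$ the element $(1\,4)$ fixes $\alpha_2$ but both of its left descents are non-orthogonal to $\alpha_2$). The correct proof relies on the path-independence of the expression $\dot{v}\,x_k(a)\,\dot{v}^{-1}$ across reduced decompositions of $v$, which is a standard consequence of the braid relations satisfied by the Tits lifts (cf.\ Bourbaki, \emph{Groupes et alg\`ebres de Lie}, Ch.~VI).
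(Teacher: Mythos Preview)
Your argument is correct. The paper itself does not give a self-contained proof but simply refers to \cite[Lemma~5.1]{rietsch2008mirror}; the route you take---factoring $\dot{w}_0=\dot{w}\,\dot{s}_{i^*}$ with $w=w_0s_{i^*}$, invoking the Tits-section property $\Ad_{\dot{v}}(e_k)=e_\ell$ whenever $v\alpha_k=\alpha_\ell$ are simple, and finishing with the explicit $SL_2$ computation $\Ad_{\dot{s}_{i^*}^{-1}}(e_{i^*})=-f_{i^*}$---is the standard one and is essentially what Rietsch's argument reduces to. Your closing paragraph correctly isolates the only nontrivial input, namely that the Tits lifts preserve simple root vectors along simple-to-simple transitions; this is indeed a consequence of the braid relations for the $\dot{s}_i$ (see e.g.\ \cite[Proposition~9.3.5]{springer1998linear} together with the rank-two checks, or Tits' original paper), and your caution about the failure of the naive length induction is well placed.
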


\begin{proof}
A quick proof can be found in \cite[Lemma 5.1]{rietsch2008mirror}.
It uses a slightly different representative for $s_i$ $(i\in I)$, but the argument works for our representative as well.
\end{proof}

\vspace{5pt}

\begin{lemma}\label{lemm ad part for xwJB-}
Let $J\subseteq I$.
If $x\in U_J$, then 
\begin{equation*}
\qp{i}(x\dot{w}_J)=-\big(\Ad_{(x\dot{w}_J)^{-1}} \nil \big)_{-\alpha_i}=\begin{cases}
0\quad\text{if\quad$i\in I-J$}\\
1\hspace{4mm}\text{if\quad$i\in J$}.
\end{cases}
\end{equation*}
\end{lemma}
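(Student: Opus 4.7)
The plan is to write $g = x\dot{w}_J$ and split $\nil = \nil_J + \nil_{I-J}$ where $\nil_J \coloneqq \sum_{j\in J}e_j$ and $\nil_{I-J} \coloneqq \sum_{k\in I-J}e_k$, and then compute the coefficient of $f_i$ in $\Ad_{g^{-1}}\nil_J$ and $\Ad_{g^{-1}}\nil_{I-J}$ separately. I will show that the first contributes $-1$ when $i\in J$ and $0$ otherwise, while the second always contributes $0$; negating the total then yields the claimed values of $\qp{i}(g)$.

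For the $\nil_{I-J}$ piece, the goal is to show that for each $k\in I-J$, the element $\Ad_{g^{-1}}e_k = \Ad_{\dot{w}_J^{-1}}\Ad_{x^{-1}}e_k$ lies in the span of root spaces $\mathfrak{g}_\beta$ whose $\alpha_k$-coefficient (in the simple-root expansion of $\beta$) equals $+1$; in particular $\beta\neq -\alpha_i$ for any $i\in I$, so no $f_i$ appears. Two inputs are needed: (i) $U_J=U\cap L_J$ is generated by the root subgroups $U_\alpha$ for $\alpha\in\Phi_J^+$, and repeated application of $\adj(e_\alpha)$ to $e_k$ only produces terms in $\mathfrak{g}_{\alpha_k+\gamma}$ with $\gamma$ a nonnegative integer combination of $\{\alpha_j\}_{j\in J}$; (ii) each $s_j$ ($j\in J$) acts on $\alpha_k$ by adding a multiple of $\alpha_j$, so $w_J^{-1}\in W_J$ modifies a root only in its $\Phi_J$-components and leaves the $\alpha_k$-coefficient unchanged.

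For the $\nil_J$ piece, first observe that $g=x\dot{w}_J\in G_J$ by our choice of representatives \eqref{eq choice of representative 1}, \eqref{eq choice of representative 2}, so $\Ad_{g^{-1}}\nil_J\in\mathfrak{g}_J$, which automatically kills the $f_i$-coefficient for $i\in I-J$. For $i\in J$, I would write $\Ad_{x^{-1}}\nil_J = \nil_J + R$ with $R$ supported in $\bigoplus_{\alpha\in\Phi_J^+,\ \height(\alpha)\ge 2}\mathfrak{g}_\alpha$, and then apply $\Ad_{\dot{w}_J^{-1}}$. The key input is Lemma~\ref{lemm w0e=-f first} applied inside the semisimple group $G_J$, which yields $\Ad_{\dot{w}_J^{-1}}\nil_J = -\sum_{j\in J}f_{j^*} = -\sum_{j\in J}f_j$ since $*$ is an involution on $J$; this contributes $-1$ to the $f_i$-coefficient for each $i\in J$. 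The remainder $\Ad_{\dot{w}_J^{-1}}R$ contributes nothing because the bijection $\alpha\mapsto -w_J\alpha$ of $\Phi_J^+$ sends simple roots to simple roots (explicitly $\alpha_j\mapsto\alpha_{j^*}$), hence preserves the subset of roots of height $\ge 2$, so $\Ad_{\dot{w}_J^{-1}}R$ is supported on $\mathfrak{g}_{-\alpha'}$ with $\alpha'\in\Phi_J^+$ of height $\ge 2$, avoiding every simple negative root.

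The main obstacles will be the bookkeeping in the $\nil_{I-J}$ step (tracking the $\alpha_k$-coefficient simultaneously through $\Ad_{x^{-1}}$ and the Weyl action of $w_J^{-1}$), together with the preliminary check that the representative $\dot{w}_J$ defined via \eqref{eq choice of representative 2} from the Chevalley generators of $G$ coincides with the analogous representative inside $G_J$ built from the inherited Chevalley generators $\{e_j,f_j\}_{j\in J}$, so that Lemma~\ref{lemm w0e=-f first} transfers verbatim to the pair $(G_J, w_J)$. Both points are essentially standard root-system manipulations but require care.
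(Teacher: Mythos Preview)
Your proposal is correct and the $\nil_J$ portion matches the paper's argument essentially verbatim: write $\Ad_{x^{-1}}\nil_J=\nil_J+R$ with $R$ supported in height $\ge 2$, then apply Lemma~\ref{lemm w0e=-f first} inside $G_J$ to get the $-1$, and use that $\alpha\mapsto -w_J\alpha$ permutes the simple roots of $\Phi_J$ to see that $R$ contributes nothing.

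The difference lies in how you dispose of $\nil_{I-J}$ (the paper's $e_{J^c}$). You track the $\alpha_k$-coefficient of each root appearing in $\Ad_{\dot{w}_J^{-1}}\Ad_{x^{-1}}e_k$ and observe it stays equal to $1$, which indeed rules out $-\alpha_i$ for every $i$. The paper instead invokes the parabolic $P_J=V_JL_J$: since $x\dot{w}_J\in G_J\subseteq P_J$ and $e_{J^c}\in\mathfrak{v}_J$, and $\mathfrak{v}_J$ is $\Ad(P_J)$-stable, one gets $\Ad_{(x\dot{w}_J)^{-1}}e_{J^c}\in\mathfrak{v}_J\subseteq\mathfrak{u}$ in one line. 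Similarly, for the case $i\in I-J$ the paper does not split $e$ at all but simply notes $\Ad_{(x\dot{w}_J)^{-1}}e\in\mathfrak{p}_J$, which contains no $\mathfrak{g}_{-\alpha_i}$ with $i\notin J$. Your route is a bit more hands-on but has the virtue of avoiding any appeal to parabolic structure; the paper's route is shorter and makes the ``bookkeeping obstacle'' you flag disappear entirely. The preliminary check you mention about $\dot{w}_J$ being the same whether computed in $G$ or in $G_J$ is immediate from \eqref{eq choice of representative 1}--\eqref{eq choice of representative 2}, since $\dot{s}_j$ for $j\in J$ is built from $e_j,f_j$ alone.
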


\begin{proof}
We first consider the case $i\in I-J$. Let $P_J$ be the standard parabolic subgroup of $G$ associated to the subset $J\subseteq I$ satisfying $B\subseteq P_J$.
Since $w_J\in W_J$, we have $\dot{w}_J\in G_J$ because of our choice of representatives given by \eqref{eq choice of representative 1} and \eqref{eq choice of representative 2}.
So we have $x\dot{w}_J\in G_J\subseteq P_J$ and $e\in \mathfrak{u}\subseteq \mathfrak{p}_J$ which imply that $\Ad_{(x\dot{w}_J)^{-1}}e\in \mathfrak{p}_J$. Hence the condition $i\notin J$ implies that we have
\begin{equation*}
 \big(\Ad_{(x\dot{w}_J)^{-1}} e\big)_{-\alpha_i}
  =0.
\end{equation*}

We next consider the case $i\in J$.
Let us write 
\begin{equation*}
 e = e_J + e_{J^c},
\end{equation*}
where we have $e_J=\sum_{j\in J} e_{j}$ and $e_{J^c}=\sum_{j\notin J} e_{j}$.
Then 
\begin{equation*}
 \big(\Ad_{(x\dot{w}_J)^{-1}} e\big)_{-\alpha_i} 
 = \big(\Ad_{(x\dot{w}_J)^{-1}} e_J\big)_{-\alpha_i}   + \big(\Ad_{(x\dot{w}_J)^{-1}} e_{J^c}\big)_{-\alpha_i} .
\end{equation*}
Let $V_J\subseteq U$ be the unipotent radical of $P_J$ so that we have the semidirect decomposition $P_J=V_JL_J$.
We denote by $\mathfrak{v}_J$ the Lie algebra of $V_J$. 
Then we have $e_{J^c}\in\mathfrak{v}_J$.
Noticing that $x\dot{w}_J\in G_J \subseteq P_J$, it follows that $\Ad_{(x\dot{w}_J)^{-1}} e_{J^c}\in \mathfrak{v}_J$ since $\mathfrak{v}_J$ is preserved under the adjoint action of $P_J$.
Recalling that $\mathfrak{v}_J\subseteq \mathfrak{u}$, this implies that the second summand in the above equality vanishes. Hence, we have
\begin{align*}
 \big(\Ad_{(x\dot{w}_J)^{-1}} e\big)_{-\alpha_i} 
 = \big(\Ad_{\dot{w}_J^{-1}}(\Ad_{x^{-1}} e_J)\big)_{-\alpha_i} .
\end{align*}
Since $x\in U_J$, we may have $\Ad_{x^{-1}} e_J = e_J + X_J$ for some $X_J\in\mathfrak{u}_J$ consisting of root vectors for roots in $\Phi_J^+$ with height $\ge2$.
Thus,
\begin{align*}
 \big(\Ad_{\dot{w}_J^{-1}}(\Ad_{x^{-1}} e_J)\big)_{-\alpha_i} 
 &= \big(\Ad_{\dot{w}_J^{-1}}(e_J+X_J)\big)_{-\alpha_i} \\
 &= \big(\Ad_{\dot{w}_J^{-1}}e_J\big)_{-\alpha_i}+\big(\Ad_{\dot{w}_J^{-1}}X_J\big)_{-\alpha_i} \\
 &=-1+0 ,
\end{align*}
where the last equality follows from Lemma~\ref{lemm w0e=-f first} for $G_J$ and its Weyl group $W_J$.
\end{proof}

\vspace{7pt}

\begin{proposition}\label{prop restrict}
The morphism $\Psi \colon Y\to X(\fan)$ restricts to a continuous map
\begin{equation*}
\Psi_{\ge 0} \colon Y_{\ge 0}\to X(\fan)_{\ge 0}
\end{equation*}
which sends $\RY{K}{J;>0}$ to $\Xp{K,J}$ for $K\subseteq J \subseteq  I$.
\end{proposition}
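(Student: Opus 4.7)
The plan is to verify the proposition directly by evaluating both coordinate functions of $\Psi$ on an arbitrary element of $Y_{\ge 0}$, using the explicit descriptions that have been built up in the preceding sections. Since the stratification \eqref{eq decomp of TNN of Y} partitions $Y_{\ge 0}$ into the pieces $\RY{K}{J;>0}$, it suffices to check on each stratum separately; and then comparing with Proposition \ref{prop nonnengative stratum} will simultaneously identify the image stratum.

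First, I would fix $K \subseteq J \subseteq I$ and an arbitrary point of $\RY{K}{J;>0}$. By Proposition \ref{prop description for R KI>0 2}, such a point is of the form $x\dot{w}_J B$ with $x \in (U_J)^{e_J}_{\ge 0}$, and the values $\Delta_{\varpi_i}(x\dot{w}_J)$ satisfy precisely
\begin{equation*}
\Delta_{\varpi_i}(x\dot{w}_J) = 0 \ (i \in K), \quad \Delta_{\varpi_i}(x\dot{w}_J) > 0 \ (i \in J-K), \quad \Delta_{\varpi_i}(x\dot{w}_J) = 1 \ (i \in I-J).
\end{equation*}
In particular all of these values are nonnegative, which gives the first $n$ coordinates of $\Psi(x\dot{w}_J B)$.

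Second, I would compute the $q_{\alpha_i}$-coordinates. This is exactly the content of Lemma \ref{lemm ad part for xwJB-}, which yields $q_{\alpha_i}(x\dot{w}_J) = 0$ for $i \in I-J$ and $q_{\alpha_i}(x\dot{w}_J) = 1$ for $i \in J$. These are also nonnegative. Combining the two computations, the representative
\begin{equation*}
(\Delta_{\varpi_1}(x\dot{w}_J), \ldots, \Delta_{\varpi_n}(x\dot{w}_J);\, q_{\alpha_1}(x\dot{w}_J), \ldots, q_{\alpha_n}(x\dot{w}_J)) \in \C^{2I}
\end{equation*}
has entirely nonnegative entries and no $(0,0)$ coordinate pair (since for each $i$ at least one of the two coordinates equals $1$), so it lies in $\C^{2I}-E$. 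Hence $\Psi(x\dot{w}_J B) \in X(\fan)_{\ge 0}$, which shows $\Psi$ restricts to $\Psi_{\ge 0} \colon Y_{\ge 0} \to X(\fan)_{\ge 0}$; continuity is inherited from $\Psi$ being a morphism of algebraic varieties.

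Finally, comparing the explicit coordinate values above with the description of $\Xp{K,J}$ given in Proposition \ref{prop nonnengative stratum}, one sees they match term by term: $x_i = 0, > 0, 1$ according to $i \in K, J-K, I-J$ respectively, and $y_i = 0, 1$ according to $i \in I-J, J$. Therefore $\Psi_{\ge 0}(\RY{K}{J;>0}) \subseteq \Xp{K,J}$, which is the last assertion. There is really no serious obstacle here: all the work has been absorbed into Proposition \ref{prop description for R KI>0 2} and Lemma \ref{lemm ad part for xwJB-}, and the proof is essentially a matter of reading off and comparing coordinates with Proposition \ref{prop nonnengative stratum}.
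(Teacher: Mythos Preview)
Your proof is correct and follows essentially the same approach as the paper: reduce to individual strata via \eqref{eq decomp of TNN of Y}, read off the $\Delta_{\varpi_i}$-coordinates from Proposition~\ref{prop description for R KI>0 2}, the $q_{\alpha_i}$-coordinates from Lemma~\ref{lemm ad part for xwJB-}, and match against Proposition~\ref{prop nonnengative stratum}. Your extra remark that no coordinate pair is $(0,0)$ is a harmless elaboration---the paper simply relies on $\Psi$ already being a well-defined morphism $Y\to X(\fan)$.
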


\begin{proof}
By the stratifications of $Y_{\ge 0}$ and $X(\fan)_{\ge 0}$ given in \eqref{eq decomp of TNN of Y} and \eqref{eq decomp of X nonnegative KJ}, respectively, it suffices to show that
\begin{equation*}
\Psi(\RY{K}{J;>0})\subseteq \Xp{K,J}
\end{equation*}
for $K\subseteq J\subseteq I$.
By Proposition~\ref{prop description for R KI>0 2}, an arbitrary element of $\RY{K}{J;>0}$ can be written as $x\dot{w}_JB$ for some $x\in (U_J)^{e_J}_{\ge0}$ satisfying
\begin{equation*}
\begin{cases}
\Delta_{\varpi_i}(x\dot{w}_J)=0\quad\text{if}\quad i\in K\\
\Delta_{\varpi_i}(x\dot{w}_J)>0 \quad\text{if}\quad i\in J-K \\
\Delta_{\varpi_i}(x\dot{w}_J)=1 \quad\text{if}\quad i\in I-J  .
\end{cases}
\end{equation*}
Since $x\in U_J$, we have from Lemma~\ref{lemm ad part for xwJB-} that
\begin{equation*}
\qp{i}(x\dot{w}_J)
=\begin{cases}
0\quad\text{if $i\in I-J$}\\
1\hspace{4mm}\text{if $i\in J$} .
\end{cases}
\end{equation*}
Thus, $\Psi(x\dot{w}_JB)$ belongs to $\Xp{K,J}$ by Proposition~\ref{prop nonnengative stratum}, as desired.
\end{proof}

\vspace{10pt}

The next goal is to show that $\Psi_{\ge 0} \colon Y_{\ge 0}\to X(\fan)_{\ge 0}$ is a homeomorphism.
Since both spaces are compact Hausdorff spaces, it suffices to show that the map is bijective.

\vspace{10pt}

\subsection{Reduction to the key claim}\label{subsec Reduction to I}

As we saw in above, we have a restricted map
\begin{equation}\label{eq restricted Psi TNN}
\Psi_{\ge 0} \colon \RY{K}{J;>0} \rightarrow \Xp{K,J}
\end{equation}
for each $K\subseteq J\subseteq I$.
To show that the whole map $\Psi_{\ge 0} \colon Y_{\ge 0}\to X(\fan)_{\ge 0}$ is a bijection (and hence a homeomorphism), it is enough to prove that \eqref{eq restricted Psi TNN} is a bijection for all $K\subseteq J\subseteq I$.
Our strategy is to apply the following proposition. 
Recall that $\Delta_{\varpi_i}(x\dot{w}_0)\ge0$ $(i\in I)$ for $x\in U^e_{\ge0}$
by Proposition~\ref{prop Delta is nonnegative}.

\begin{theorem}\label{thm LR in our setting}
The map 
\begin{align*}
 U^e_{\ge0} \rightarrow \R^{I}_{\ge0}
 \quad ; \quad
 x \mapsto 
 (\Delta_{\varpi_1}(x\dot{w}_0), \ldots, \Delta_{\varpi_{\rkg}}(x\dot{w}_0))
\end{align*}
is a homeomorphism.
 \end{theorem}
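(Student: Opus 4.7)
The plan is to reduce Theorem \ref{thm LR in our setting} to Lam--Rietsch's analogous parametrization theorem \cite[Theorem 7.3]{lam2016total}, which establishes the corresponding homeomorphism for the adjoint form $\Gad = G/Z_G$.

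First I would transfer from the adjoint to the simply connected setting. Because $U \cap Z_G \subseteq U \cap T = \{1\}$, the isogeny $\pi \colon G \to \Gad$ restricts to an isomorphism $U \xrightarrow{\sim} \Uad$ of unipotent radicals, whose differential sends the regular nilpotent $\nil = \sum_i e_i$ to a corresponding regular nilpotent $\nil^{\text{\rm ad}} \in \Lie(\Uad)$. Consequently, $\pi$ induces a canonical bijection between $U^{\nil} = U \cap Z_G(\nil)$ and its adjoint counterpart $\Uad \cap Z_{\Gad}(\nil^{\text{\rm ad}})$ that identifies the totally nonnegative parts, since both are described in terms of the same generators $x_i(a)$ with $a \in \R_{\ge 0}$ and $\pi$ intertwines these generators.

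Next I would compare the coordinate functions on the two sides. Lam--Rietsch's parametrization uses certain affine Schubert functions $\Delta^{\text{\rm ad}}_i$ on $\Gad$ attached to the fundamental representations of $\Gad$, whose highest weights are the minimal integral multiples $d_i \varpi_i$ of $\varpi_i$ that lie in the root lattice $\RL$. A direct representation-theoretic comparison (this is the content of the paper's Proposition \ref{prop LR and our}) shows that
\begin{equation*}
\pi^* \Delta^{\text{\rm ad}}_i \;=\; \bigl(\Delta_{\varpi_i}\bigr)^{d_i} \qquad (i \in I),
\end{equation*}
essentially because the $d_i$-th tensor power of the highest weight line in $V_{\varpi_i}$ descends to the highest weight line of the relevant fundamental representation of $\Gad$.

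With these ingredients in place, my map factors (under the identification of the first step) as
\begin{equation*}
U^{\nil} \cap U_{\ge 0} \xrightarrow{\;x \,\mapsto\, (\Delta_{\varpi_i}(x\dot{w}_0))_i\;} \R^{I}_{\ge 0} \xrightarrow{\;(t_i) \,\mapsto\, (t_i^{d_i})\;} \R^{I}_{\ge 0},
\end{equation*}
and the composite coincides with Lam--Rietsch's homeomorphism. Since Proposition \ref{prop Delta is nonnegative} ensures $\Delta_{\varpi_i}(x\dot{w}_0) \ge 0$ on $U^{\nil} \cap U_{\ge 0}$, and since the coordinatewise $d_i$-th power map is a self-homeomorphism of $\R^{I}_{\ge 0}$ (with continuous inverse $t_i \mapsto t_i^{1/d_i}$), it follows that my original map is also a homeomorphism. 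I expect the principal obstacle to lie in the function-comparison step, namely in establishing Proposition \ref{prop LR and our}: this requires unwinding Lam--Rietsch's construction of $\Delta^{\text{\rm ad}}_i$ and matching their highest weight data with tensor powers of highest weight vectors on the simply connected side, which is the purpose of the technical sections on splittings that follow.
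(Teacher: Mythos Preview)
Your overall strategy --- reduce to Lam--Rietsch by comparing the $\Delta_{\varpi_i}$ with their adjoint-group counterparts and then peel off the coordinatewise power map --- is exactly the paper's, but two concrete points are off.

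First, Lam--Rietsch's theorem \cite[Theorem~7.3]{lam2016total} is stated only for a \emph{simple} adjoint group, whereas $G$ (and hence $\Gad$) is merely semisimple. You therefore need a preliminary reduction to simple factors: write $I=I_1\sqcup\cdots\sqcup I_m$, split $U^{\nil}_{\ge0}\cong\prod_k (U_{I_k})^{\nil_{I_k}}_{\ge0}$, and show that your map is the product of the corresponding maps for each simple $G_{I_k}$. This is precisely what the paper's ``splittings'' section does; it is \emph{not} about matching highest-weight data for Proposition~\ref{prop LR and our}, which is proved separately afterward.

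Second, your identification of the domain with Lam--Rietsch's is not quite right. Their map is defined on $(\Gad)^f_{\ge0}=(\Umad)^f_{\ge0}$, the centralizer of a \emph{negative} principal nilpotent $f$ in the \emph{opposite} unipotent radical, and their $\Delta_i$ pairs against a \emph{lowest} weight vector. The quotient $\pi$ alone only identifies $U^{\nil}_{\ge0}$ with $(\Uad)^{\nil^{\text{\rm ad}}}_{\ge0}$, not with $(\Gad)^f_{\ge0}$. The actual bridge is the conjugation isomorphism $x\mapsto [\dot{w}_0^{-1}x\dot{w}_0]$ from $U^{\nil}_{\ge0}$ to $(\Gad)^f_{\ge0}$ (using $\Ad_{\dot{w}_0^{-1}}e_i=-f_{i^*}$), and Proposition~\ref{prop LR and our} records the function comparison in this twisted form:
\[
\Delta_i\bigl([\dot{w}_0^{-1}x\dot{w}_0]\bigr)=\bigl(\Delta_{\varpi_i}(x\dot{w}_0)\bigr)^{m_i},
\]
not the untwisted identity $\pi^*\Delta^{\text{\rm ad}}_i=(\Delta_{\varpi_i})^{d_i}$ you wrote. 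Once both of these points are in place, your factorization argument goes through verbatim.
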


This is essentially a result of Lam--Rietsch (\cite[Theorem~7.3]{lam2016total}),  but their theorem asserts that the claim holds for a \textit{simple} algebraic group of \textit{adjoint type} with slightly modified functions. In subsequent sections, we deduce Theorem~\ref{thm LR in our setting} from loc.\ cit.
Before giving a detail proof, we here apply this claim to show that the map \eqref{eq restricted Psi TNN} is a bijection. \\

\noindent
\textbf{Injectivity of the map \eqref{eq restricted Psi TNN}:}
Suppose that $gB, g'B\in \RY{K}{J;>0}$ satisfy $\Psi(gB)=\Psi(g'B)$.
By Proposition~\ref{prop description for R KI>0 2}, we may assume that
\begin{align*}
 g = x\dot{w}_J, \quad g' = x'\dot{w}_J
 \quad \text{for some $x,x'\in (U_J)^{e_J}_{\ge0}$}
\end{align*}
which satisfy 
\begin{equation*}
\begin{cases}
\Delta_{\varpi_i}(x\dot{w}_J)=0\quad (i\in K)\\
\Delta_{\varpi_i}(x\dot{w}_J)>0 \quad (i\in J-K) \\
\Delta_{\varpi_i}(x\dot{w}_J)=1 \quad (i\in I-J)
\end{cases}
\quad \text{and} \qquad
\begin{cases}
\Delta_{\varpi_i}(x'\dot{w}_J)=0\quad (i\in K)\\
\Delta_{\varpi_i}(x'\dot{w}_J)>0 \quad (i\in J-K) \\
\Delta_{\varpi_i}(x'\dot{w}_J)=1 \quad (i\in I-J).
\end{cases}
\end{equation*}
We also have 
\begin{equation*}
\begin{cases}
\qp{i}(x\dot{w}_J)=0\quad\text{if\quad$i\in I-J$}\\
\qp{i}(x\dot{w}_J)=1\hspace{4mm}\text{if\quad$i\in J$}
\end{cases}
\quad \text{and} \qquad
\begin{cases}
\qp{i}(x'\dot{w}_J)=0\quad\text{if\quad$i\in I-J$}\\
\qp{i}(x'\dot{w}_J)=1\hspace{4mm}\text{if\quad$i\in J$}
\end{cases}
\end{equation*}
by Lemma~\ref{lemm ad part for xwJB-}.
Hence, the assumption $\Psi(gB)=\Psi(g'B)$ and the injectivity of the parametrization of $\Xp{K,J}$ (Proposition~\ref{prop nonnengative stratum parametrization}) imply that 
\begin{equation*}
(\Delta_{\varpi_1}(x\dot{w}_J),\ldots,\Delta_{\varpi_{\rkg}}(x\dot{w}_J) )
=
(\Delta_{\varpi_1}(x'\dot{w}_J),\ldots,\Delta_{\varpi_{\rkg}}(x'\dot{w}_J) ).
\end{equation*}
In particular, we have  
\begin{equation*}
 \Delta_{\varpi_j}(x\dot{w}_J) = \Delta_{\varpi_j}(x'\dot{w}_J)
\qquad (j\in J).
\end{equation*}
Noticing that we have $x\in U_J\subseteq G_J$ and $\dot{w}_J\in G_J$ by \eqref{eq choice of representative 2}, we can express this equality as
\begin{equation*}
 \Delta_{\varpi'_j}(x\dot{w}_J) = \Delta_{\varpi'_j}(x'\dot{w}_J)
\qquad (j\in J)
\end{equation*}
by Proposition~\ref{prop restriction of Delta}~(i), where $\Delta_{\varpi'_j}$ is the function on $G_J$ defined in \eqref{eq def of Delta'}.
Now, applying Theorem~\ref{thm LR in our setting} to $(U_J)^{e_J}_{\ge0}$ in the (simply connected) semisimple group $G_J$, we obtain
\begin{equation*}
 x = x' \quad \text{in $U_J$}.
\end{equation*}
In particular, we obtain that $x\dot{w}_J B = x'\dot{w}_J B$ in $\RY{K}{J;>0}$ . This proves the injectivity of the map \eqref{eq restricted Psi TNN}.\qed\\

\noindent
\textbf{Surjectivity of the map \eqref{eq restricted Psi TNN}:}
Let $[x;y]\in \Xp{K,J}$ be an arbitrary element. By the description of $\Xp{K,J}$ given in Proposition~\ref{prop nonnengative stratum}, we may assume that
\begin{equation}\label{eq Surjectivity of Psi 10}
\begin{split}
\begin{cases}
x_i=0\quad (i\in K) \\ 
x_i>0\quad (i\in J-K) \\ 
x_i=1\quad (i\in I-J) 
\end{cases}
\quad \text{and} \quad 
\begin{cases}
y_i=0\quad (i\in I-J) \\
y_i=1\quad (i\in J). 
\end{cases} 
\end{split}
\end{equation}
Applying Theorem~\ref{thm LR in our setting} to $(U_J)^{e_J}_{\ge0}$ in $G_J$, it follows that there exists $x\in (U_J)^{e_J}_{\ge0}$ with the representative $\dot{w}_J\in G_J$ (chosen as in \eqref{eq choice of representative 1} and \eqref{eq choice of representative 2} for the semisimple group $G_J$) which satisfies
\begin{equation}\label{eq surjectivity Delta' xwJ}
\Delta_{\varpi'_i}(x\dot{w}_J)=x_i\quad (i\in J),
\end{equation}
where $\Delta_{\varpi'_i}$ is the function on $G_J$ defined in \eqref{eq def of Delta'}.
Since $x\dot{w}_J\in G_J$, we can replace $\Delta_{\varpi'_i}$ in \eqref{eq surjectivity Delta' xwJ} to $\Delta_{\varpi_i}$ by Proposition~\ref{prop restriction of Delta}~(i).
We also have 
\begin{equation*}
\Delta_{\varpi_i}(x\dot{w}_J)=1=x_i \quad (i\in I-J) 
\end{equation*}
by Proposition~\ref{prop restriction of Delta}~(ii).
Combining these equalities, we obtain that
\begin{equation*}
\Delta_{\varpi_i}(x\dot{w}_J)=x_i \quad (i\in I) .
\end{equation*}
This and \eqref{eq Surjectivity of Psi 10} together with $x\in (U_J)^{e_J}_{\ge0}$ imply that $x\dot{w}_JB\in \RY{K}{J;>0}$ by Proposition~\ref{prop description for R KI>0 2}.
In addition, we have 
\begin{equation*}
\begin{cases}
\qp{i}(x\dot{w}_J)=0=y_i\quad (i\in I-J) \\
\qp{i}(x\dot{w}_J)=1=y_i\quad (i\in J) 
\end{cases} 
\end{equation*}
by Lemma~\ref{lemm ad part for xwJB-} since $x\in U_J$. Namely, we have $\qp{i}(x\dot{w}_J)=y_i$ for all $i\in I$.
Hence, we obtain that $\Psi(x\dot{w}_JB)=[x;y]$ which proves the surjectivity of the map \eqref{eq restricted Psi TNN}.\qed

\vspace{20pt}

Now we proved that the map \eqref{eq restricted Psi TNN} is bijective for all $K\subseteq J\subseteq I$.
This means that the whole map 
$\Psi_{\ge 0} \colon Y_{\ge 0}\to X(\fan)_{\ge 0}$ 
is a bijection, and hence a homeomorphism (as we pointed out at the end of the previous section).
Therefore, we obtain the following theorem based on Theorem~\ref{thm LR in our setting} (which we prove in subsequent sections).

\begin{theorem} \label{thm homeo}
The map 
\begin{equation*}
 \Psi_{\ge0}\colon Y_{\ge 0}\rightarrow X(\Sigma)_{\ge0}
 \quad ; \quad
 gB \mapsto [\Delta_{\varpi_1}(g),\ldots,\Delta_{\varpi_{\rkg}}(g);\qp{1}(g),\ldots,\qp{n}(g)]
\end{equation*}
is a homeomorphism satisfying
\begin{equation*}
\Psi_{\ge 0}(\RY{K}{J;>0}) = \Xp{K,J}
\end{equation*}
for $K\subseteq J \subseteq I$.
\end{theorem}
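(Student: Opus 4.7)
My plan is to deduce this theorem from the work already assembled in Section~\ref{sec map}, taking as input the key statement Theorem~\ref{thm LR in our setting}. Continuity of $\Psi_{\ge 0}$ and the inclusion $\Psi_{\ge 0}(\RY{K}{J;>0}) \subseteq \Xp{K,J}$ for every $K\subseteq J \subseteq I$ are already established in Proposition~\ref{prop restrict}. Since $Y_{\ge 0}$ is closed in the compact projective variety $G/B$ (hence compact) and $X(\fan)_{\ge 0}$ is Hausdorff, any continuous bijection between them is automatically a homeomorphism. By the disjoint-union decompositions \eqref{eq decomp of TNN of Y} and \eqref{eq decomp of X nonnegative KJ}, it therefore suffices to check that each restricted map $\Psi_{\ge 0} \colon \RY{K}{J;>0} \to \Xp{K,J}$ is a bijection.

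For injectivity, suppose $x\dot{w}_J B$ and $x'\dot{w}_J B$ in $\RY{K}{J;>0}$, with $x,x' \in (U_J)^{e_J}_{\ge 0}$ as in Proposition~\ref{prop description for R KI>0 2}, satisfy $\Psi(x\dot{w}_J B) = \Psi(x'\dot{w}_J B)$. Lemma~\ref{lemm ad part for xwJB-} shows that the $\qp{i}$-coordinates depend only on $J$, so the injectivity of the parametrization of $\Xp{K,J}$ in Proposition~\ref{prop nonnengative stratum parametrization} forces $\Delta_{\varpi_i}(x\dot{w}_J) = \Delta_{\varpi_i}(x'\dot{w}_J)$ for all $i\in I$, in particular for $i \in J$. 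Because our chosen representatives \eqref{eq choice of representative 1}--\eqref{eq choice of representative 2} satisfy $\dot{w}_J \in G_J$, Proposition~\ref{prop restriction of Delta}(i) rewrites these as equalities of the functions $\Delta_{\varpi'_j}$ on $G_J$, after which Theorem~\ref{thm LR in our setting} applied to the simply connected semisimple group $G_J$ gives $x=x'$. For surjectivity, take $[x;y] \in \Xp{K,J}$ in the normalized form provided by Proposition~\ref{prop nonnengative stratum}, apply Theorem~\ref{thm LR in our setting} to $G_J$ to obtain $x \in (U_J)^{e_J}_{\ge 0}$ with $\Delta_{\varpi'_i}(x\dot{w}_J) = x_i$ for $i \in J$, convert back to $\Delta_{\varpi_i}$ using Proposition~\ref{prop restriction of Delta}(i), and invoke part~(ii) to fill in $\Delta_{\varpi_i}(x\dot{w}_J) = 1 = x_i$ for $i \in I-J$. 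Proposition~\ref{prop description for R KI>0 2} then places $x\dot{w}_J B$ in $\RY{K}{J;>0}$, and Lemma~\ref{lemm ad part for xwJB-} supplies the matching $\qp{i}$-coordinates, giving $\Psi_{\ge 0}(x\dot{w}_J B) = [x;y]$.

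Essentially all of the difficulty of the theorem is packaged into Theorem~\ref{thm LR in our setting}, whose deduction from the Lam--Rietsch result \cite[Theorem~7.3]{lam2016total} in the simply connected setting is the real work deferred to Sections~\ref{sec splittings} and \ref{sec LR in our setting}. Granting that input, the assembly above is bookkeeping whose only genuine subtlety is verifying compatibility between $G$ and the Levi-type subgroup $G_J$: namely, that $\dot{w}_J \in G_J$ (from our choice of representatives) and that the fundamental weights $\varpi'_j$ of $G_J$ are indeed the restrictions $\varpi_j|_{T_J}$, so that the two uses of Theorem~\ref{thm LR in our setting}---once for $G$, and once for each Levi piece $G_J$---plug in coherently.
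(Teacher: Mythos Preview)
Your proposal is correct and follows essentially the same route as the paper: reduce to bijectivity on each stratum via the compact--Hausdorff observation, then prove injectivity and surjectivity of each restricted map using exactly the chain Proposition~\ref{prop description for R KI>0 2}, Lemma~\ref{lemm ad part for xwJB-}, Proposition~\ref{prop nonnengative stratum parametrization}/\ref{prop nonnengative stratum}, Proposition~\ref{prop restriction of Delta}, and Theorem~\ref{thm LR in our setting} applied to $G_J$. Your closing remark that the real content lies in Theorem~\ref{thm LR in our setting} also matches the paper's organization.
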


\vspace{10pt}

Finally, recall that we have the strongly dominant weight polytope $P^{\lambda}$ for a regular dominant weight $\lambda$ (Section~\ref{subsec SDWP}).
Its faces are $F_{K,J}\subseteq P^{\lambda}$ given in \eqref{eq faces of SDWP} for $K\subseteq J \subseteq I$.
Combining the homeomorphism $\Psi_{\ge0}$ above with the moment map $\mu_{\ge0} \colon X(\Sigma)_{\ge0} \rightarrow P^{\lambda}$ given in \eqref{eq moment map on NN}, we obtain the desired connection between $Y_{\ge 0}$ and $P^{\lambda}$ as follows.
\begin{theorem}
The composition $\mu_{\ge0} \circ \Psi_{\ge0}$ is a homeomorphism
\begin{equation*}
 Y_{\ge 0}\rightarrow P^{\lambda}
\end{equation*}
which sends $\RY{K}{J;>0}$ to $F_{K,J}$
for $K\subseteq J \subseteq I$.
\end{theorem}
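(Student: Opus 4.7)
The plan is to observe that this theorem is essentially an immediate corollary of the two main structural results already established in the excerpt, so the proof becomes a matter of carefully tracking the stratifications through a composition of homeomorphisms. First, I would invoke Theorem~\ref{thm homeo}, which gives that $\Psi_{\ge 0} \colon Y_{\ge 0} \to X(\Sigma)_{\ge 0}$ is a homeomorphism sending each totally nonnegative Richardson stratum $\RY{K}{J;>0}$ onto the nonnegative torus orbit $\Xp{K,J}$ for every $K \subseteq J \subseteq I$. Second, I would appeal to the moment map homeomorphism $\mu_{\ge 0} \colon X(\Sigma)_{\ge 0} \to P^{\lambda}$ from \eqref{eq moment map on NN}, recorded in Section~\ref{subsec SDWP}, which is a standard result from toric geometry (see \cite[Sect.~4.2]{fulton1993introduction}).

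The composition $\mu_{\ge 0} \circ \Psi_{\ge 0}$ is then a homeomorphism as the composition of two homeomorphisms between compact Hausdorff spaces. The only remaining point is to verify the stratum-compatibility. By the recollection in Section~\ref{subsec SDWP}, the moment map $\mu_{\ge 0}$ sends the nonnegative torus orbit $\Xp{K,J}$ onto the relative interior of the corresponding face $F_{K,J}$ of $P^{\lambda}$ (where $F_{K,J}$ is the face of $P^{\lambda}$ defined via \eqref{eq faces of SDWP}, with the same indexing convention as the cones $\tau_{K,J}$). Combining this with the stratum-preservation in Theorem~\ref{thm homeo} gives
\begin{equation*}
 (\mu_{\ge 0} \circ \Psi_{\ge 0})(\RY{K}{J;>0}) = \mu_{\ge 0}(\Xp{K,J}) = \mathrm{relint}(F_{K,J}),
\end{equation*}
which is the statement of the theorem under the standard identification of the face $F_{K,J}$ with its relative interior in the cell-decomposition context.

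There is no real obstacle here: all the hard analytic and combinatorial work has already been done in establishing Theorem~\ref{thm homeo} (which relies on the key Theorem~\ref{thm LR in our setting}) and in the combinatorial identification of the normal fan of $P^{\lambda}$ with $\Sigma$. The only subtlety worth mentioning explicitly in the writeup is matching the indexing convention between the cones $\tau_{K,J}$ of $\Sigma$ and the faces $F_{K,J}$ of $P^{\lambda}$, which was arranged precisely in Section~\ref{subsec SDWP} to ensure that the normal fan of $P^{\lambda}$ is $\Sigma$ with the ray generated by $-\alpha_i^\vee$ corresponding to the facet $P^{\lambda} \cap H_i$ and the ray generated by $\varpi_i^\vee$ corresponding to the facet $P^{\lambda} \cap H_i^{\lambda}$. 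Hence the proof is essentially a single line invoking the two prior homeomorphisms and their stratum-preserving properties.
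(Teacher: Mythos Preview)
Your proposal is correct and matches the paper's approach exactly: the paper also derives this theorem in one line by composing the homeomorphism $\Psi_{\ge0}$ of Theorem~\ref{thm homeo} with the moment map homeomorphism $\mu_{\ge0}$ from \eqref{eq moment map on NN}, using the stratum-compatibility recorded in Section~\ref{subsec SDWP}. There is no additional content in the paper's treatment beyond what you have written.
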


\vspace{10pt}

Now our goal is to give a proof of Theorem~\ref{thm LR in our setting} for an arbitrary simply connected semisimple algebraic group $G$ over $\C$ (to apply it to $G_J$ which is not simple in most cases even when $G$ is simple). We deduce it from Lam--Rietsch's result \cite[Theorem~7.3]{lam2016total} which is proved for a \textit{simple} algebraic group of \textit{adjoint type}.
Our semisimple group $G$ splits into the product of simple components.
Correspondingly, $U^{\nil}_{\ge0}$ splits into the product of the ones lying in simple algebraic groups.
To apply Lam--Rietsch's result, we first investigate this splitting in Section~\ref{sec splittings}, and then we study the quotient by the center in Section~\ref{sec LR in our setting} to connect our group $G$ to that of adjoint type. These arguments will finally lead us to a proof of Theorem~\ref{thm LR in our setting}.

\vspace{30pt}

\section{Splittings of $U^{\nil}_{\ge0}$ and $\Delta_{\varpi_i}$}\label{sec splittings}

In this section, we show that we may assume that $G$ is a (simply connected) \textit{simple} algebraic group when we prove Theorem~\ref{thm LR in our setting}.
For this purpose, we describe the splittings of $U^e_{\ge0}$ and the functions $\Delta_{\varpi_i}$ $(i\in I)$.

\subsection{Splitting of the groups}

Recall that $I$ is the Dynkin diagram of the root system for $G$ with respect to the maximal torus $T$ and the Borel subgroup $B$.
Let 
\begin{align}\label{eq GJ splitting 10}
 I=I_1\sqcup\cdots\sqcup I_m
\end{align}
be the decomposition into connected components of $I$.
For each $1\le k\le m$, we have the simple algebraic group $G_{I_k}=(L_{I_k},L_{I_k})$ associated to the subset $I_k\subseteq I$ (Section~\ref{subsec Richardson strata of Y}). 
These subgroups are precisely the simple components of $G$ (e.g.\ \cite[Sect.~27.5]{humphreys2012linear}). Namely, we have $(G_{I_k},G_{I_{\ell}})=e$ for all $k\ne \ell$, and the group $G$ splits into the product of these subgroups:
\begin{align}\label{eq decomp into simple comp 1}
 G=G_{I_1}\cdots G_{I_m}.
\end{align}
In particular, the product map 
\begin{align*}
 G_{I_1}\times G_{I_2} \times \cdots \times G_{I_m} \rightarrow G
\end{align*}
has a finite kernel (loc.\ cit.). 
Recall that $B_{I_k} = B \cap G_{I_k}$ is a Borel subgroup of $G_{I_k}$ for $1\le k\le m$.

Recall that $U$ is  the unipotent radical of $B$. It also splits into the direct product
\begin{align}\label{eq GJ splitting 55}
 U = U_{I_1} \cdots U_{I_m}.
\end{align}
Here, each $U_{I_k}$ ($1\le k\le m$) is the product of root subgroups $U_{\alpha}$ for $\alpha\in \Phi_{I_k}^+$, being the unipotent radical of $B_{I_k}$.
Recall that 
\begin{align*}
 \nil = \sum_{i\in I} \nil_i = \nil_{I_1} + \cdots + \nil_{I_m},
\end{align*}
where $\nil_{I_k}=\sum_{i\in I_k} \nil_i$ is a regular nilpotent element in $\mathfrak{g}_{I_k}$ for $1\le k\le m$.
Note that each $G_{I_\ell}$ acts trivially on $\mathfrak{g}_{I_k}$ when $\ell\ne k$.
Hence, an element $u=u_1\cdots u_m\in U$ (with $u_k\in U_{I_k}$ for $1\le k\le m$) centralizes $e= \nil_{I_1} + \cdots + \nil_{I_m}\in \mathfrak{g}_{I_1}\oplus\cdots\oplus \mathfrak{g}_{I_m}$ if and only if each $u_k$ centralizes $\nil_{I_k}$ for $1\le k\le m$.
Therefore, the centralizer $U^e$ splits into the direct product, i.e., $U^{\nil} = U_{I_1}^{\nil_{I_1}} \cdots U_{I_m}^{\nil_{I_m}}$.
In particular, we obtain the product isomorphism
\begin{align}\label{eq GJ splitting 57}
 U_{I_1}^{\nil_{I_1}} \times \cdots \times U_{I_m}^{\nil_{I_m}} \rightarrow U^{\nil}.
\end{align}
Recalling \eqref{eq GJ splitting 10}, 
we also have the product isomorphism for the Weyl group: 
\begin{align}\label{eq GJ splitting 60}
 W_{I_1}\times \cdots \times W_{I_m} \rightarrow W.
\end{align}
Correspondingly, we have $w_0 = w_1 \cdots w_m$, where $w_k$ is the longest element of $W_{I_k}$ for $1\le k\le m$. 

\vspace{10pt}

\subsection{Splitting of the functions $\Delta_{\varpi_i}$}\label{subsec splitting of GJ and Delta}

Let us consider a map given by
\begin{align*}
 \Delta \colon G \rightarrow \C^{I}
 \quad ; \quad
 g \mapsto (\Delta_{\varpi_1}(g),\ldots,\Delta_{\varpi_{\rkg}}(g)),
\end{align*}
where $\Delta_{\varpi_i}$ is the function on $G$ defined in \eqref{eq def of Delta}. 
In this section, we show that the map $\Delta$ splits as well according to the splitting \eqref{eq decomp into simple comp 1} of $G$.

For $i\in I$, recall that $V_{\varpi_i}$ is an irreducible $G$-module, and let $v_{\varpi_i}\in V_{\varpi_i}$ be a highest weight vector of weight $\varpi_i$.
We fix $1\le k\le m$, and consider the simple component $G_{I_k}$ of $G$.
Let $i\in I_k$, and suppose that $G_{I_{\ell}}$ $(I_{\ell}\ne I_k)$ is another simple component of $G$. 
Since we have $i\notin I_{\ell}$, we have
\begin{align}\label{eq GIell acts trivially}
G_{I_{\ell}}v_{\varpi_i}=v_{\varpi_i} \quad \text{in $V_{\varpi_i}$}
\end{align}
from the proof of Proposition~\ref{prop restriction of Delta}~(ii).
For each $J\subseteq I$, we consider a map 
\begin{align*}
 \Delta_J \colon G_J\rightarrow \C^J
 \quad ; \quad
 g \mapsto (\Delta_{\varpi'_i}(g))_{i\in J},
\end{align*}
where $\Delta_{\varpi'_i}$ is the function on $G_J$ defined in \eqref{eq def of Delta'}.

\begin{proposition}\label{prop Deltas commute}
We have the following commutative diagrams.\vspace{5pt}
\[
  \xymatrix{
    G_{I_1}\times\cdots\times G_{I_m} \ar[rd]^{\qquad \Delta_{I_1}\times\cdots\times\Delta_{I_m}} \ar[d]_{\text{\rm prod.}} & \\
    G \ar[r]_{\Delta} & \C^{I}
  }
\]
\end{proposition}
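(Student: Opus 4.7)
The plan is to verify the diagram pointwise: for each $(g_1,\ldots,g_m)\in G_{I_1}\times\cdots\times G_{I_m}$ and each $i\in I$, show that the $i$-th coordinate of $\Delta(g_1\cdots g_m)$ equals the $i$-th coordinate of $(\Delta_{I_1}(g_1),\ldots,\Delta_{I_m}(g_m))$. So I would fix $i\in I$, let $k\in\{1,\ldots,m\}$ be the unique index with $i\in I_k$, and aim to prove
\begin{equation*}
\Delta_{\varpi_i}(g_1\cdots g_m) = \Delta_{\varpi_i'}(g_k).
\end{equation*}

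The first step is to reduce the left-hand side to the single factor $g_k$. Since the simple components of $G$ pairwise commute (i.e.\ $(G_{I_k},G_{I_\ell})=e$ for $k\ne\ell$), I can reorder in $V_{\varpi_i}$ and write
\begin{equation*}
g_1\cdots g_m v_{\varpi_i} \;=\; \Bigl(\prod_{\ell\ne k}g_\ell\Bigr)\, g_k v_{\varpi_i}.
\end{equation*}
Next I would show that each $g_\ell$ with $\ell\ne k$ acts as the identity on the vector $g_k v_{\varpi_i}$: using commutativity again and \eqref{eq GIell acts trivially} (which gives $g_\ell v_{\varpi_i}=v_{\varpi_i}$ for $\ell\ne k$),
\begin{equation*}
g_\ell\bigl(g_k v_{\varpi_i}\bigr) \;=\; g_k\bigl(g_\ell v_{\varpi_i}\bigr) \;=\; g_k v_{\varpi_i}.
\end{equation*}
Iterating over all $\ell\ne k$ gives $g_1\cdots g_m v_{\varpi_i}=g_k v_{\varpi_i}$, and extracting the $\varpi_i$-weight coefficient yields $\Delta_{\varpi_i}(g_1\cdots g_m)=\Delta_{\varpi_i}(g_k)$.

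Finally, since $i\in I_k$ and $g_k\in G_{I_k}$, Proposition~\ref{prop restriction of Delta}~(i) (applied with $J=I_k$) gives $\Delta_{\varpi_i}(g_k)=\Delta_{\varpi_i'}(g_k)$, which is precisely the $i$-th entry of $\Delta_{I_k}(g_k)$ inside $\Delta_{I_1}(g_1)\times\cdots\times\Delta_{I_m}(g_m)\in\C^{I_1}\times\cdots\times\C^{I_m}=\C^I$. This completes the verification for each coordinate and hence the commutativity of the diagram. I do not anticipate any serious obstacle: the statement is essentially a bookkeeping consequence of \eqref{eq GIell acts trivially}, the pairwise commutativity of simple components, and Proposition~\ref{prop restriction of Delta}~(i), with the only mild subtlety being to carefully justify the reordering step inside $V_{\varpi_i}$ before invoking the triviality of the $G_{I_\ell}$-action.
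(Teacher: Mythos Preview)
Your proposal is correct and follows essentially the same approach as the paper's proof: both arguments verify the diagram coordinatewise by combining the pairwise commutativity of the simple components with \eqref{eq GIell acts trivially} to reduce $g_1\cdots g_m v_{\varpi_i}$ to $g_k v_{\varpi_i}$, and then invoke Proposition~\ref{prop restriction of Delta}~(i) to identify $\Delta_{\varpi_i}(g_k)$ with $\Delta_{\varpi_i'}(g_k)$. Your write-up is slightly more explicit about the reordering step, but the logic is identical.
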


\begin{proof}
Let us take $(y_1,\ldots,y_m)\in G_{I_1}\times\cdots\times G_{I_m}$.
Since $I=I_1\sqcup\cdots\sqcup I_m$, we have
\begin{align*}
 \Delta(y_1\cdots y_m) 
 &= ((y_1\cdots y_m v_{\varpi_i})_{\varpi_i})_{i\in I} \\
 &= \big(\ ((y_1\cdots y_m v_{\varpi_i})_{\varpi_i})_{i\in I_1}\ , \ \ldots \ , \ ((y_1\cdots y_m v_{\varpi_i})_{\varpi_i})_{i\in I_m} \ \big).
\end{align*}
Combining \eqref{eq GIell acts trivially} with the fact that $y_1,\ldots,y_m$ are pairwise commuting, we obtain that
\begin{align*}
 \Delta(y_1\cdots y_m) 
 = \big(\ ((y_1v_{\varpi_i})_{\varpi_i})_{i\in I_1}\ , \ \ldots \ , \ ((y_m v_{\varpi_i})_{\varpi_i})_{i\in I_m} \ \big) .
\end{align*}
Since we have $y_k\in G_{I_k}$ for $1\le k\le m$, this coincides with $\big( \Delta_{I_1}(y_1),\ldots,\Delta_{I_m}(y_m) \big)$ by Proposition~\ref{prop restriction of Delta}~(i).
\end{proof}

\vspace{10pt}

\subsection{Splitting of the totally nonnegative part of $U^e$}

Recall from \eqref{eq GJ splitting 55} that we have the product isomorphism $U_{I_1}\times \cdots \times U_{I_m} \stackrel{\cong}{\rightarrow} U$.
By the definitions of totally nonnegative parts of the groups appearing here (Section~\ref{TNN G and G/B}), this map further restricts to an isomorphism of monoids 
\begin{align*}
 (U_{I_1})_{\ge0}\times  \cdots \times (U_{I_m})_{\ge0} \stackrel{\cong}{\rightarrow} U_{\ge0}.
\end{align*}
Combining this with \eqref{eq GJ splitting 57}, we obtain the product isomorphism
\begin{align}\label{eq splitting of TN 20}
 (U_{I_1})^{\nil_{I_1}}_{\ge0}\times \cdots \times (U_{I_m})^{\nil_{I_m}}_{\ge0} \stackrel{\cong}{\rightarrow} U^e_{\ge0}.
\end{align}
Now we are ready to prove the following splitting.

\begin{proposition}\label{prop splitting of LR map}
The map 
\begin{align*}
 U^e_{\ge0} \rightarrow \R^{I}_{\ge0}
 \quad ; \quad
 x \mapsto 
 (\Delta_{\varpi_1}(x\dot{w}_0), \ldots, \Delta_{\varpi_{\rkg}}(x\dot{w}_0))
\end{align*}
appeared in Theorem~$\ref{thm LR in our setting}$ is identified with the product of the maps
\begin{align*}
 (U_{I_k})^{e_{I_k}}_{\ge0} \rightarrow \R^{I_k}_{\ge0}
 \quad ; \quad
 x \mapsto 
 (\Delta_{\varpi'_i}(x\dot{w}_{I_k}))_{i\in I_k}
 \qquad (1\le k\le m),
\end{align*}
where each $\Delta_{\varpi'_i}$ $(i\in I_k)$ is the function on $G_{I_k}$ defined in \eqref{eq def of Delta'}.
\end{proposition}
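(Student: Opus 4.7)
The plan is to combine the product isomorphism \eqref{eq splitting of TN 20} with Proposition~\ref{prop Deltas commute}, after first verifying that the chosen representative $\dot{w}_0$ factors along the decomposition of $G$ into simple components. Under the product decomposition \eqref{eq GJ splitting 60}, the longest element factors as $w_0=w_{I_1}\cdots w_{I_m}$ with pairwise commuting factors, so a reduced expression for $w_0$ can be obtained by concatenating reduced expressions for the $w_{I_k}$. Since $\dot{w}$ does not depend on the choice of reduced expression by \eqref{eq choice of representative 2}, and since $\dot{s}_i\in G_{I_k}$ whenever $i\in I_k$ (from \eqref{eq choice of representative 1} and the fact that $U_{\pm\alpha_i}\subseteq G_{I_k}$), we obtain $\dot{w}_0=\dot{w}_{I_1}\cdots\dot{w}_{I_m}$ with $\dot{w}_{I_k}\in G_{I_k}$ for each $k$.

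Next, given $x\in U^{\nil}_{\ge 0}$, I decompose $x=x_1\cdots x_m$ with $x_k\in (U_{I_k})^{\nil_{I_k}}_{\ge 0}$ via \eqref{eq splitting of TN 20}. Because $G_{I_k}$ and $G_{I_\ell}$ centralize each other elementwise when $k\ne\ell$, the factors $x_k$ and $\dot{w}_{I_\ell}$ commute across distinct indices, so we may rearrange
\begin{equation*}
 x\dot{w}_0
 = (x_1\cdots x_m)(\dot{w}_{I_1}\cdots\dot{w}_{I_m})
 = (x_1\dot{w}_{I_1})\cdots(x_m\dot{w}_{I_m}),
\end{equation*}
where each factor $x_k\dot{w}_{I_k}$ lies in $G_{I_k}$. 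Applying Proposition~\ref{prop Deltas commute} to this product then yields
\begin{equation*}
 \Delta(x\dot{w}_0)
 = \bigl(\Delta_{I_1}(x_1\dot{w}_{I_1}),\ldots,\Delta_{I_m}(x_m\dot{w}_{I_m})\bigr)\in\C^{I_1}\times\cdots\times\C^{I_m}=\C^{I};
\end{equation*}
equivalently, for $i\in I_k$ we have $\Delta_{\varpi_i}(x\dot{w}_0)=\Delta_{\varpi'_i}(x_k\dot{w}_{I_k})$.

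These two steps show that the map in the statement, precomposed with the isomorphism \eqref{eq splitting of TN 20}, is exactly the product of the maps $(U_{I_k})^{\nil_{I_k}}_{\ge 0}\to\R^{I_k}_{\ge 0}$, $x_k\mapsto(\Delta_{\varpi'_i}(x_k\dot{w}_{I_k}))_{i\in I_k}$, as asserted. There is no substantial obstacle: everything reduces to unwinding the product decompositions already established earlier in this section together with Proposition~\ref{prop Deltas commute}. The only point requiring genuine care is the factorization of $\dot{w}_0$ addressed at the outset, which rests on the well-definedness of the representatives $\dot{w}$ under changes of reduced expression.
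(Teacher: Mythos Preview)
Your proof is correct and follows essentially the same route as the paper: decompose $x$ via \eqref{eq splitting of TN 20}, factor $\dot{w}_0=\dot{w}_{I_1}\cdots\dot{w}_{I_m}$, rearrange using commutativity of the simple components, and apply Proposition~\ref{prop Deltas commute}. The only cosmetic difference is that the paper justifies the factorization of $\dot{w}_0$ by citing \cite[Exercise~9.3.4]{springer1998linear}, whereas you argue it directly from the concatenation of reduced expressions and the well-definedness in \eqref{eq choice of representative 2}.
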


\begin{proof}
For $x\in U^e_{\ge0}$, we can write 
\begin{align*}
 x=x_1\cdots x_m
\end{align*}
for some $x_k\in (U_{I_k})^{e_{I_k}}_{\ge0}$ for $1\le k\le m$ by \eqref{eq splitting of TN 20}.
We also have $w_0=w_1\cdots w_m$ as we saw below \eqref{eq GJ splitting 60}, where each $w_k$ is the longest element of $W_{I_k}$.
This means that $\dot{w}_0=\dot{w}_1\cdots \dot{w}_m$ (\cite[Exercise~9.3.4]{springer1998linear}).
Hence, we obtain
\begin{align*}
 x\dot{w}_0 = (x_1\dot{w}_1)\cdots (x_m\dot{w}_m),
\end{align*}
where we have $x_k\dot{w}_k\in G_{I_k}$ for $1\le k\le m$ because of the choice of representative $\dot{w}_k$ given in \eqref{eq choice of representative 1} and \eqref{eq choice of representative 2}.
Now the claim follows from Proposition~\ref{prop Deltas commute}
\end{proof}

\vspace{10pt}

Since each $G_{I_k}$ is a (simply connected) simple algebraic group, Proposition~\ref{prop splitting of LR map} means that we may assume that our group $G$ is a (simply connected) \textit{simple} algebraic group when we prove Theorem~\ref{thm LR in our setting}.

\vspace{30pt}

\section{Proof of Theorem~\ref{thm LR in our setting}}\label{sec LR in our setting}

In this section, we give a proof of Theorem~\ref{thm LR in our setting} for an arbitrary simply connected semisimple algebraic group $G$ over $\C$.
For this purpose, we may assume that $G$ is \textit{simple} as we pointed out at the end of the last section.
To apply Lam--Rietsch's result \cite[Theorem~7.3]{lam2016total} (proved for a simple algebraic group of \textit{adjoint} type), we need to pass to the quotient $G/Z_G$. In the quotient, however, the fundamental weights $\varpi_i$ are no longer genuine characters of the maximal torus $T/Z_G$, and the definition of the functions $\Delta_{\varpi_i}$ needs some modification as we will see below. 

\subsection{Passing to the quotient by $Z_G$}

For each $i\in I$, there exists $m_i\in\Z_{>0}$ such that $m_i\varpi_i$ lies in the root lattice:
\begin{align*}
 m_i\varpi_i \in \bigoplus_{k\in I} \Z\alpha_{k} .
\end{align*}
We consider the irreducible representation $V_{m_i\varpi_i}$ of $G$ with highest weight $m_i\varpi_i$. Let $v_{m_i\varpi_i}\in V_{m_i\varpi_i}$ be a highest weight vector.

\begin{lemma}\label{lem Z acts trivially on Vmipii}
The center $Z_G\subseteq G$ acts on $V_{m_i\varpi_i}$ trivially.
\end{lemma}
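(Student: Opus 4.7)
The plan is to reduce the claim to the standard representation-theoretic fact that the center $Z_G$ acts on an irreducible $G$-module $V_\lambda$ via the scalar character $\lambda|_{Z_G}$, and then observe that the hypothesis $m_i\varpi_i\in\RL$ forces $m_i\varpi_i|_{Z_G}$ to be trivial by the very definition of $\RL$ in Section~\ref{subsec set up}.

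First, I would recall from the standard representation theory of semisimple Lie algebras that every weight $\mu$ of $V_{m_i\varpi_i}$ has the form $\mu=m_i\varpi_i-\sum_{k\in I}c_k\alpha_k$ for some nonnegative integers $c_k$. Recalling that $\RL=\Hom(T/Z_G,\C^\times)$ is precisely the subgroup of characters of $T$ that factor through $T/Z_G$, every element of the root lattice (in particular each simple root $\alpha_k$) restricts to the trivial character on $Z_G$. Consequently, for any $t\in Z_G$ and any weight $\mu$ of $V_{m_i\varpi_i}$, we have $\mu(t)=m_i\varpi_i(t)$.

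Next, the assumption $m_i\varpi_i\in\RL$ combined with the same description of $\RL$ yields $m_i\varpi_i(t)=1$ for all $t\in Z_G$. Putting the two previous steps together, $Z_G$ acts trivially on every weight space of $V_{m_i\varpi_i}$, and since $V_{m_i\varpi_i}$ is spanned by weight vectors, $Z_G$ acts trivially on the whole representation, as desired.

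There is essentially no obstacle here: the whole argument rests on (i) the fact that weights of an irreducible $G$-representation all lie in a single coset of $\RL$ inside $\WL$, and (ii) the identification $\RL=\Hom(T/Z_G,\C^\times)$ already fixed in the notation section. If one wanted to be even more direct, one could observe that $Z_G=\bigcap_{k\in I}\Ker\alpha_k$ as subgroups of $T$, so any character in $\RL=\bigoplus_{k\in I}\Z\alpha_k$ is trivial on $Z_G$ automatically.
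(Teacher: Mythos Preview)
Your proof is correct and follows essentially the same approach as the paper: both argue that every weight of $V_{m_i\varpi_i}$ lies in the root lattice (being $m_i\varpi_i$ minus a nonnegative integral combination of simple roots), and then deduce triviality on $Z_G$ from the fact that root-lattice characters vanish on $Z_G$. The only cosmetic difference is that the paper invokes $Z_G=\bigcap_{\alpha\in\Phi}\Ker\alpha$ directly, whereas you appeal to the equivalent identification $\RL=\Hom(T/Z_G,\C^{\times})$ from Section~\ref{subsec set up} (and mention the paper's formulation as an alternative).
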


\begin{proof}
Recall that 
\begin{align}\label{eq center = kernel alpha}
 Z_G = \bigcap_{\alpha\in\Phi} \Ker\alpha \subseteq T
\end{align}
(\cite[Sect.\ 26, Exercise~2]{humphreys2012linear}).
So $Z_G$ acts on $V_{m_i\varpi_i}$ via some weights of $T$.
An arbitrary weight $\lambda$ of $V_{m_i\varpi_i}$ can be written as 
\begin{align*}
 \lambda = m_i\varpi_i - \sum_{\alpha\in\Phi^+} c_{\alpha}\alpha 
\end{align*}
for some coefficients $c_{\alpha}\in\Z_{\ge0}$.
Since $m_i\varpi_i$ lies in the root lattice, it follows that $\lambda$ is an integral linear combination of roots. 
This and \eqref{eq center = kernel alpha} show that $\lambda (z) = 1$ for $z\in Z_G$.
This means that $Z_G$ acts on $V_{m_i\varpi_i}$ trivially.
\end{proof}

We set $\Gad \coloneqq G/Z_G$.
For $g\in G$, we write its right coset by $[g]\in \Gad$.
Since we are assuming that $G$ is a simple algebraic group, so is $\Gad$.
Lemma~\ref{lem Z acts trivially on Vmipii} implies the next claim.

\begin{corollary}\label{cor Z acts trivially on Vmipii}
$V_{m_i\varpi_i}$ has a structure of a $\Gad$-module given by
$$[g] v \coloneqq g v$$
for $[g]\in\Gad$ and $v\in V_{m_i\varpi_i}$.
\end{corollary}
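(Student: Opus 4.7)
The plan is straightforward: this is a routine consequence of Lemma~\ref{lem Z acts trivially on Vmipii}, which says that the $G$-action on $V_{m_i\varpi_i}$ restricts to the trivial action of $Z_G$. So the required statement is simply the general principle that if $Z_G$ lies in the kernel of a representation $G\to \mathrm{GL}(V)$, then the representation descends to the quotient $G/Z_G$.

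More concretely, I would verify the two things required: well-definedness and the module axioms. For well-definedness, suppose $[g]=[g']$ in $\Gad$, so that $g' = gz$ for some $z\in Z_G$. Then by Lemma~\ref{lem Z acts trivially on Vmipii} we have $zv = v$ for every $v\in V_{m_i\varpi_i}$, hence
\[
 g'v = (gz)v = g(zv) = gv,
\]
which shows that the assignment $[g]\cdot v \coloneqq gv$ does not depend on the choice of the representative $g\in G$ of $[g]\in\Gad$. For the module axioms, the identity $[e]\cdot v = ev = v$ is immediate, and the associativity $[g_1][g_2]\cdot v = [g_1g_2]\cdot v = (g_1g_2)v = g_1(g_2 v) = [g_1]\cdot([g_2]\cdot v)$ follows directly from the fact that $V_{m_i\varpi_i}$ is a $G$-module.

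There is no real obstacle here; the whole content was already absorbed into the preceding lemma. I would present the argument in two or three lines, just enough to record that the $G$-action factors through the quotient map $G\twoheadrightarrow \Gad$ and gives the claimed $\Gad$-module structure on $V_{m_i\varpi_i}$.
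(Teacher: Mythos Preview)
Your proposal is correct and matches the paper's approach exactly: the paper simply states that Lemma~\ref{lem Z acts trivially on Vmipii} implies the corollary, and your write-up spells out precisely the standard verification (well-definedness of the action on cosets plus the module axioms) that this implication entails.
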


\vspace{10pt}

Let 
\begin{align*}
 \pi \colon G\rightarrow \Gad
 \quad ; \quad
 g\mapsto [g]
\end{align*}
be the quotient map by $Z_G$.
Then $\Bad\coloneqq \pi(B)$ is a Borel subgroup of $\Gad$, and $\Tad\coloneqq \pi(T)$ is a maximal torus of $\Gad$, and $\Uad\coloneqq \pi(U)$ is a unipotent radical of $\Bad$ (\cite[Corollary~21.3~C]{humphreys2012linear}).
We also set $\Umad\coloneqq \pi(U^-)$.

$G$ and $\Gad$ have the same Lie algebra (identified under the differential of $\pi$ at the identity), and they have the same root system since the adjoint action of $T$ on $\mathfrak{g}$ factors through the quotient torus $\Tad$ (see \eqref{eq center = kernel alpha}). More precisely, for each root $\alpha\colon T\rightarrow \C^{\times}$ of $\Phi$, there is a unique homomorphism $\alpha'\colon \Tad\rightarrow \C^{\times}$ satisfying $\alpha'\circ\pi=\alpha$.
To simplify the notation, we write $\alpha\colon \Tad\rightarrow \C^{\times}$ instead of $\alpha'$ for the rest of this section.
Then, for $i\in I$, we have
\begin{align*}
\alpha_i\colon \Tad\rightarrow \C^{\times}
\quad \text{and} \quad
 \alpha^{\vee}_i\colon \C^{\times}\rightarrow \Tad ,
\end{align*}
where the second map is the composition of $\alpha^{\vee}_i\colon \C^{\times}\rightarrow T$ and the quotient map $T\rightarrow \Tad$.
Note that the differentials of these maps coincide with the maps in \eqref{eq differential of roots and coroots}.
By \eqref{eq center = kernel alpha}, the intersection of the kernels of roots for $\Gad$ is trivial which means that $\Gad$ is of adjoint type.

The restrictions
\begin{align}\label{eq U iso Uad}
 \pi|_{U} \colon U \rightarrow \Uad
 \quad \text{and} \quad
 \pi|_{U^-} \colon U^-\rightarrow \Umad
\end{align}
are isomorphisms of algebraic groups since we have $U\cap Z_G=\{e\}=U^-\cap Z_G$.
Since both of these maps commute with the conjugations of $T$ and $\Tad$ under the map $\pi|_T\colon T\rightarrow \Tad$, it follows that these maps preserve root subgroups (e.g.\ \cite[Sect.~26.3]{humphreys2012linear}). 

\subsection{Pinning for $\Gad$}

For $i\in I$, we set
\begin{align}\label{eq Chevalley'}
 e'_i\coloneqq -e_i\in\mathfrak{g}_{\alpha_i}
 \quad \text{and} \quad
 f'_i\coloneqq -f_i\in\mathfrak{g}_{-\alpha_i}.
\end{align}
We take them as our choice of Chevalley generators for $\Gad$, and we define parametrizations $\xad_i \colon \C\rightarrow \Uad_{\alpha_i}$ and $\yad_i \colon \C\rightarrow \Uad_{-\alpha_i}$ for $i\in I$ by
\begin{align}\label{eq pinning' for Gad}
  \xad_i(t) = \pi(\exp(te'_i)) \quad \text{and} \quad \yad_i(t)=\pi(\exp(tf'_i))
  \qquad (t\in\C^{\times}).
\end{align}

\vspace{5pt}

\begin{definition}\label{def pinning of Gad}
We define $\Gad_{\ge0}$ $($resp.\ $\Umad_{\ge0}$$)$ to be the totally nonnegative part of $\Gad$ and $($resp.\ $\Umad$$)$ with respect to the pinning given in \eqref{eq pinning' for Gad}. 
More precisely, \vspace{3pt}
\begin{itemize}
 \item[$(1)$] $\Gad_{\ge0}$ is the subomonoid of $\Gad$ generated by $\xad_i(t)$, $\yad_i(t)$ for $i\in I$ and $t\in\R_{\ge0}$ and by $\chi(t)$ for $\chi\in\Hom(\C^{\times},\Tad)$ and $t\in\R_{>0}$, \vspace{5pt}
 \item[$(2)$] $\Umad_{\ge0}$ is the subomonoid of $\Gad$ generated by $\yad_i(t)$ for $i\in I$ and $t\in\R_{\ge0}$.\vspace{3pt}
\end{itemize}
$($We will explain the reason for using the \textit{negative} Chevalley generators \eqref{eq Chevalley'} below.$)$
\end{definition}

\vspace{5pt}

For each $w\in W$, we define $\mathring{w}\in N_{\Gad}(\Tad)$ by the same manner as in \eqref{eq choice of representative 1} and \eqref{eq choice of representative 2} but with the pinning \eqref{eq pinning' for Gad}.
Then, by definition, we have $\mathring{s}_i=[\dot{s}_i^{-1}]$ in $\Gad$ for $i\in I$. 
For the longest element $w_0\in W$, we have $w_0=w_0^{-1}$ from which one can deduce that
\begin{align*}
 \mathring{w}_0=[\dot{w}_0^{-1}] \quad \text{in $N_{\Gad}(\Tad)$},
\end{align*}
where $\dot{w}_0^{-1}\coloneqq (\dot{w}_0)^{-1}$.

\vspace{10pt}

\subsection{Lam--Rietsch's function $\Delta_i$}

Recall that $\Gad$ acts on $V_{m_i\varpi_i}$ (Corollary~\ref{cor Z acts trivially on Vmipii}).
Let $v_{m_i\varpi_i}$ be a highest weight vector in $V_{m_i\varpi_i}$, and $\shapo{\ }{\ }$ the Shapovalov form on $V_{m_i\varpi_i}$ normalized by the condition $\shapo{v_{m_i\varpi_i}}{v_{m_i\varpi_i}}=1$.
This form is characterized by the contravariance with respect to the Chevalley generators $e_i$, $f_i$ (hence $e'_i$, $f'_i$ as well) for $i\in I$ (\cite[Sect.\ 3.14]{humphreys2012linear}).
Let us take a lowest weight vector in $V_{m_i\varpi_i}$:
\begin{align}\label{eq choice of lowest weight vector}
 v^-_{m_i\varpi_i} \coloneqq \mathring{w}_0v_{m_i\varpi_i} = \dot{w}_0^{-1}v_{m_i\varpi_i} \in V_{m_i\varpi_i}.
\end{align}
We set
\begin{align*}
 f \coloneqq\sum_{i\in I} f'_i = -\sum_{i\in I} f_i,
 \quad \text{and} \quad 
 (\Gad)^f \coloneqq \Gad\cap Z_{\Gad}(f).
\end{align*}
Note that we have $(\Gad)^f = (\Umad)^f$ since $\Gad$ is of adjoint type (e.g.\ \cite[Sect.~2]{Bualibanu2017Peterson}). So we set
\begin{align*}
 (\Gad)^f_{\ge0} \coloneqq (\Gad)^f \cap (\Umad)_{\ge0}.
\end{align*}
For each $i\in I$, let us consider a function $\Delta_i \colon (\Gad)^f_{\ge0} \rightarrow \C$ given by
\begin{align*}
 \Delta_i([y]) \coloneqq \shapo{[y]v_{m_i\varpi_i}}{v^-_{m_i\varpi_i}} = \shapo{yv_{m_i\varpi_i}}{v^-_{m_i\varpi_i}}
 \qquad ([y]\in (\Gad)^f_{\ge0} ).
\end{align*}
Note that the value $\Delta_i([y])$ does not depend on the choice of the highest weight vector $v_{m_i\varpi_i}$. 
This is essentially the function appeared in the work of Lam--Rietsch (\cite[Section~12]{lam2016total}) (see Section~12 and Remark~6.5 of loc.\ cit.). In fact, they chose a specific lowest weight vector of $V_{m_i\varpi_i}$. We will see below that our choice of $v^-_{m_i\varpi_i}$ agrees with that of Lam--Rietsch up to a \textit{positive} scalar multiple.

\vspace{10pt}

\subsection{Relation to our function $\Delta_{\varpi_i}$}

In this section, we clarify the relation between the two functions $\Delta_i \colon (\Gad)^f_{\ge0} \rightarrow \C$ and $\Delta_{\varpi_i} \colon G^{\nil}_{\ge0} \rightarrow \C$, where the latter is the restriction of the function defined in \eqref{eq def of Delta}.
There are two main differences; (1) the groups $\Gad$ and $G$, (2) nilpotent elements $f$ and $e$.
The difference (1) can be treated by the quotient map $\pi\colon G\rightarrow \Gad$.
To connect the difference (2), we use the longest element $w_0$ which with Lemma~\ref{lemm w0e=-f first} requires the choice of our Chevalley generators $e'_i, f'_i$ as we will see below.
We begin with the following well-known property of the Shapovalov form.

\begin{lemma}\label{lem property of Shapovalov form}
For $w\in W$, $v,v'\in V_{m_i\varpi_i}$, we have
\begin{align*}
 \shapo{\dot{w}v}{\dot{w}v'} = \shapo{v}{v'} .
\end{align*}
$($The same equality works for $\mathring{w}_0$ as well.$)$
\end{lemma}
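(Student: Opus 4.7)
The plan is to reduce the assertion to the single identity $\tau(\dot{w})=\dot{w}^{-1}$ in $G$, where $\tau$ is the anti-involution of $G$ obtained by lifting the transpose anti-involution of $\mathfrak{g}$ characterized by $\tau(e_i)=f_i$, $\tau(f_i)=e_i$, and $\tau|_{\mathfrak{t}}=\mathrm{id}$ (concretely, $\tau=\mathrm{inv}\circ\sigma$ where $\sigma$ is the Chevalley automorphism of $G$). By construction, the Shapovalov form on $V_{m_i\varpi_i}$ is the unique symmetric bilinear form satisfying $\shapo{v_{m_i\varpi_i}}{v_{m_i\varpi_i}}=1$ together with the contravariance $\shapo{Xv}{v'}=\shapo{v}{\tau(X)v'}$ for $X\in\mathfrak{g}$. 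Because $\tau$ commutes with the exponential in the sense $\tau(\exp X)=\exp\tau(X)$, this contravariance lifts to the group: $\shapo{gv}{v'}=\shapo{v}{\tau(g)v'}$ for all $g\in G$. Granting $\tau(\dot{w})=\dot{w}^{-1}$, the lemma is then immediate from
\begin{align*}
\shapo{\dot{w}v}{\dot{w}v'}=\shapo{v}{\tau(\dot{w})\dot{w}v'}=\shapo{v}{\dot{w}^{-1}\dot{w}v'}=\shapo{v}{v'}.
\end{align*}

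For the identity $\tau(\dot{w})=\dot{w}^{-1}$, fix a reduced expression $w=s_{i_1}\cdots s_{i_k}$; then $\dot{w}=\dot{s}_{i_1}\cdots\dot{s}_{i_k}$ by \eqref{eq choice of representative 2}, and because $\tau$ reverses products,
\begin{align*}
\tau(\dot{w})=\tau(\dot{s}_{i_k})\cdots\tau(\dot{s}_{i_1}).
\end{align*}
Thus it suffices to verify $\tau(\dot{s}_i)=\dot{s}_i^{-1}$ for each simple reflection. Applying $\tau$ to $\dot{s}_i=\exp(f_i)\exp(-e_i)\exp(f_i)$ and using $\tau(\exp X)=\exp\tau(X)$ gives
\begin{align*}
\tau(\dot{s}_i)=\exp(e_i)\exp(-f_i)\exp(e_i),\qquad \dot{s}_i^{-1}=\exp(-f_i)\exp(e_i)\exp(-f_i).
\end{align*}
Both expressions lie in the $SL_2$-subgroup of $G$ generated by $U_{\pm\alpha_i}$, where the identity reduces to a direct $2\times2$ matrix computation; both products evaluate to $\bigl(\begin{smallmatrix}0&1\\-1&0\end{smallmatrix}\bigr)\in SL_2(\C)$, confirming $\tau(\dot{s}_i)=\dot{s}_i^{-1}$.

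For the parenthetical assertion about $\mathring{w}_0$, recall from Section~\ref{subsec pinning} (applied to $\Gad$ with the pinning \eqref{eq pinning' for Gad}) that $\mathring{w}_0=[\dot{w}_0^{-1}]$ in $N_{\Gad}(\Tad)$, and by Corollary~\ref{cor Z acts trivially on Vmipii} the $\Gad$-action on $V_{m_i\varpi_i}$ is defined by $[g]v=gv$, so $\mathring{w}_0v=\dot{w}_0^{-1}v$. Substituting $v,v'$ by $\dot{w}_0^{-1}v,\dot{w}_0^{-1}v'$ in the already-established equality for $w=w_0$ gives
\begin{align*}
\shapo{v}{v'}=\shapo{\dot{w}_0\dot{w}_0^{-1}v}{\dot{w}_0\dot{w}_0^{-1}v'}=\shapo{\dot{w}_0^{-1}v}{\dot{w}_0^{-1}v'}=\shapo{\mathring{w}_0 v}{\mathring{w}_0 v'},
\end{align*}
which is the second assertion.

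The conceptual crux is the reduction via contravariance to $\tau(\dot{s}_i)=\dot{s}_i^{-1}$; the only explicit computation needed is the $SL_2$ verification in the base case, which is short and routine. I do not expect any genuine obstacle, since both $\tau$-contravariance of the Shapovalov form and the Tits-type identity $\exp(e)\exp(-f)\exp(e)=\exp(-f)\exp(e)\exp(-f)$ in $SL_2$ are standard.
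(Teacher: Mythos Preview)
Your proof is correct and follows essentially the same route as the paper: contravariance of the Shapovalov form reduces the claim to the simple-reflection case, and the general $w$ follows by iteration. The paper's execution is marginally slicker: it writes $\shapo{\dot{s}_iv}{\dot{s}_iv'}=\shapo{y_i(1)x_i(-1)y_i(1)v}{x_i(-1)y_i(1)x_i(-1)v'}$ using the two expressions for $\dot{s}_i$ in \eqref{eq choice of representative 1} on the two slots simultaneously, so contravariance cancels the factors directly and no $SL_2$ computation is needed (equivalently, $\tau(\dot{s}_i)=\dot{s}_i^{-1}$ is immediate upon inverting the second expression $\dot{s}_i=x_i(-1)y_i(1)x_i(-1)$).
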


\begin{proof}
We first prove the claim for the case $w=s_i$ for some $i\in I$. Recall from \eqref{eq choice of representative 1} that we have
\begin{align*}
 \dot{s}_i = y_i(1)x_i(-1)y_i(1) = x_i(-1)y_i(1)x_i(-1).
\end{align*}
So we have
\begin{align*}
 \shapo{\dot{s}_iv}{\dot{s}_iv'} 
 = \shapo{y_i(1)x_i(-1)y_i(1)v}{x_i(-1)y_i(1)x_i(-1)v'} 
 = \shapo{v}{v'} ,
\end{align*}
where the second equality follows from the contravariance of the Shapovalov form.
The assertion for a general $w\in W$ now follows by repeating this equality.
\end{proof}

\vspace{10pt}

Recall from \eqref{eq U iso Uad} that we have an isomorphism $U \rightarrow \Uad$ given by $x \mapsto [x]$.
Thus, we obtain an isomorphism
\begin{align*}
 U \rightarrow \Umad
 \quad ; \quad 
 x \mapsto [\dot{w}_0^{-1}x\dot{w}_0].
\end{align*}
By Definition~\ref{def pinning of Gad} and $\Ad_{\dot{w}_0^{-1}} e_i = - f_{i^*}$ for $i\in I$ (Lemma~\ref{lemm w0e=-f first}), this map restricts to an isomorphism
\begin{align*}
 U_{\ge0} \rightarrow \Umad_{\ge0}
 \quad ; \quad 
 x \mapsto [\dot{w}_0^{-1}x\dot{w}_0].
\end{align*}
Thus, we obtain an isomorphism\vspace{5pt}
\begin{align}\label{eq Uene and Uadene}
 U^e_{\ge0} \rightarrow (\Gad)^f_{\ge0}
 \quad ; \quad 
 x \mapsto [\dot{w}_0^{-1}x\dot{w}_0]
\end{align}
\vspace{-5pt}\\
since $U^e_{\ge0}=U^e\cap U_{\ge0}$ and $(\Gad)^f_{\ge0}= (\Gad)^f\cap \Umad_{\ge0}$.

\vspace{20pt}

Based on the isomorphism \eqref{eq Uene and Uadene}, the next claim states that $\Delta_i$ is essentially a power of $\Delta_{\varpi_i}$.

\begin{proposition}\label{prop LR and our}
Let $i\in I$. For all $x\in U^e_{\ge0}$, we have
\begin{align*}
 \Delta_i([\dot{w}_0^{-1}x\dot{w}_0]) = (\Delta_{\varpi_i}(x\dot{w}_0))^{m_i}.
\end{align*}
\end{proposition}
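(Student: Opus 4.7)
My plan is to prove the identity by unwinding the definition of $\Delta_i$, using the Weyl-group invariance of the Shapovalov form to absorb the conjugation by $\dot{w}_0^{-1}$, and then realizing $V_{m_i\varpi_i}$ as a Cartan component inside the tensor power $V_{\varpi_i}^{\otimes m_i}$ so that the exponent $m_i$ appears naturally.

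First I would expand the left-hand side. By definition,
\begin{align*}
 \Delta_i([\dot{w}_0^{-1}x\dot{w}_0])
 = \shapo{\dot{w}_0^{-1}x\dot{w}_0\,v_{m_i\varpi_i}}{v^-_{m_i\varpi_i}}
 = \shapo{\dot{w}_0^{-1}x\dot{w}_0\,v_{m_i\varpi_i}}{\dot{w}_0^{-1}v_{m_i\varpi_i}},
\end{align*}
using the choice of lowest weight vector $v^-_{m_i\varpi_i}=\dot{w}_0^{-1}v_{m_i\varpi_i}$ from \eqref{eq choice of lowest weight vector} (recall $\mathring{w}_0 = [\dot{w}_0^{-1}]$, and that the $\Gad$-action agrees with the $G$-action via $\pi$ by Corollary~\ref{cor Z acts trivially on Vmipii}). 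Applying Lemma~\ref{lem property of Shapovalov form} with $w=w_0$ to the pair of vectors above, both arguments get multiplied by $\dot{w}_0$, and the inner $\dot{w}_0^{-1}$'s cancel, leaving
\begin{align*}
 \Delta_i([\dot{w}_0^{-1}x\dot{w}_0]) = \shapo{x\dot{w}_0\,v_{m_i\varpi_i}}{v_{m_i\varpi_i}}.
\end{align*}
Since the Shapovalov form is normalized by $\shapo{v_{m_i\varpi_i}}{v_{m_i\varpi_i}}=1$, the right-hand side is exactly the coefficient of $v_{m_i\varpi_i}$ in the weight decomposition of $x\dot{w}_0\,v_{m_i\varpi_i}\in V_{m_i\varpi_i}$.

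The next step is the Cartan embedding. The irreducible $G$-module $V_{m_i\varpi_i}$ is the Cartan component of $V_{\varpi_i}^{\otimes m_i}$, and under this embedding $V_{m_i\varpi_i}\hookrightarrow V_{\varpi_i}^{\otimes m_i}$ we may normalize so that $v_{m_i\varpi_i}$ is sent to $v_{\varpi_i}^{\otimes m_i}$. The $m_i\varpi_i$-weight space of $V_{\varpi_i}^{\otimes m_i}$ is one-dimensional, spanned by $v_{\varpi_i}^{\otimes m_i}$, so the coefficient of $v_{m_i\varpi_i}$ in $x\dot{w}_0\,v_{m_i\varpi_i}$ agrees with the coefficient of $v_{\varpi_i}^{\otimes m_i}$ in $x\dot{w}_0\,v_{\varpi_i}^{\otimes m_i}=(x\dot{w}_0\,v_{\varpi_i})^{\otimes m_i}$. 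Writing $x\dot{w}_0\,v_{\varpi_i}=\sum_\mu c_\mu v_\mu$ as its weight decomposition in $V_{\varpi_i}$, the coefficient of $v_{\varpi_i}^{\otimes m_i}$ in the $m_i$-fold tensor product is $c_{\varpi_i}^{m_i}=(\Delta_{\varpi_i}(x\dot{w}_0))^{m_i}$, completing the proof.

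The only genuine subtlety I expect is making sure the Cartan embedding respects the chosen normalizations—namely that the vector $v_{m_i\varpi_i}$ can be taken to equal $v_{\varpi_i}^{\otimes m_i}$ while the Shapovalov form on $V_{m_i\varpi_i}$ matches the restriction of the tensor product Shapovalov form on $V_{\varpi_i}^{\otimes m_i}$. This is automatic because both sides are $G$-contravariant forms on $V_{m_i\varpi_i}$ with value $1$ on the pair $(v_{m_i\varpi_i},v_{m_i\varpi_i})$, and such a form is unique by \cite[Sect.\ 3.14]{humphreys2008representations}. Note that the total-nonnegativity hypothesis $x\in U^e_{\ge 0}$ is not used in the identity itself; it only ensures that both sides are real and nonnegative, which is consistent with the role this proposition plays in the reduction to Lam--Rietsch's theorem.
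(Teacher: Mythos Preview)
Your proof is correct and follows essentially the same route as the paper: unwind $\Delta_i$ via the Shapovalov form, use Lemma~\ref{lem property of Shapovalov form} to cancel the $\dot{w}_0^{-1}$, and then embed $V_{m_i\varpi_i}$ as the Cartan component of $V_{\varpi_i}^{\otimes m_i}$ to extract the power $m_i$. The only minor difference is that the paper sidesteps your Shapovalov-compatibility check by noting that, once reduced to a coefficient of $v_{m_i\varpi_i}$ in a weight decomposition, that coefficient is independent of the choice of highest weight vector, so one may simply take $v_{m_i\varpi_i}=(v_{\varpi_i})^{\otimes m_i}$ without invoking uniqueness of the contravariant form on the tensor product.
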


\begin{proof}
We have
\begin{equation}\label{eq LR 10 rewrite}
\begin{split}
 \Delta_i([\dot{w}_0^{-1}x\dot{w}_0]) 
 &= \shapo{\dot{w}_0^{-1}x\dot{w}_0v_{m_i\varpi_i}}{v^-_{m_i\varpi_i}} \\ 
 &= \shapo{\dot{w}_0^{-1}x\dot{w}_0v_{m_i\varpi_i}}{\dot{w}_0^{-1}v_{m_i\varpi_i}} \quad \text{(by \eqref{eq choice of lowest weight vector})} \\
 &= \shapo{x\dot{w}_0v_{m_i\varpi_i}}{v_{m_i\varpi_i}} \quad \text{(by Lemma~\ref{lem property of Shapovalov form})}.
 \end{split}
\end{equation} 
Recalling our normalization $\shapo{v_{m_i\varpi_i}}{v_{m_i\varpi_i}}=1$, we can write the value of the Shapovalov form appearing in the last expression as follows:
\begin{equation}\label{eq LR rewrite 20}
 \begin{split}
 \shapo{x\dot{w}_0v_{m_i\varpi_i}}{v_{m_i\varpi_i}} 
 =\text{the coeff.\ of $v_{m_i\varpi_i}$ for the w.d.\ of $x\dot{w}_0v_{m_i\varpi_i}$ in $V_{m_i\varpi_i}$},
 \end{split}
\end{equation}
where w.d.\ is the abbreviation of ``weight decomposition".
We investigate this coefficient below by realizing $V_{m_i\varpi_i}$ in the tensor product $(V_{\varpi_i})^{\otimes m_i}$, where $V_{\varpi_i}$ is the irreducible $G$-module of highest weight $\varpi_i$.

We consider the $G$-module $(V_{\varpi_i})^{\otimes m_i}$ with a highest weight vector $(v_{\varpi_i})^{\otimes m_i}$. This might not be irreducible, but $(v_{\varpi_i})^{\otimes m_i}$ is a highest weight vector of weight $m_i\varpi_i$. Let 
\begin{align*}
 V\coloneqq \text{span}_{\C} \{ G\cdot (v_{\varpi_i})^{\otimes m_i} \}\subseteq (V_{\varpi_i})^{\otimes m_i}.
\end{align*} 
Then $V$ is the irreducible $G$-submodule whose highest weight is $m_i\varpi_i$ (see the proof of Proposition~\ref{prop restriction of Delta}~(i)).
This implies that we have 
\begin{align*}
 (v_{\varpi_i})^{\otimes m_i}\in V_{m_i\varpi_i}\subseteq (V_{\varpi_i})^{\otimes m_i}.
\end{align*} 
Notice that the coefficient in \eqref{eq LR rewrite 20} does not depend on the choice of the highest weight vector $v_{m_i\varpi_i}$.
So, without loss of generality, we may assume that $v_{m_i\varpi_i} = (v_{\varpi_i})^{\otimes m_i}$
when we compute the coefficient in \eqref{eq LR rewrite 20}.
Therefore, 
\begin{align*}
\text{RHS of }\eqref{eq LR rewrite 20}
 &= \text{the coeff.\ of $(v_{\varpi_i})^{\otimes m_i}$ for the w.d.\ of $x\dot{w}_0(v_{\varpi_i})^{\otimes m_i}$ in $(V_{\varpi_i})^{\otimes m_i}$} \\
 &= \Big(\text{the coeff.\ of $v_{\varpi_i}$ for the w.d.\ of $x\dot{w}_0v_{\varpi_i}$ in $V_{\varpi_i}$} \Big)^{m_i} \\
 &= (\Delta_{\varpi_i}(x\dot{w}_0))^{m_i} \qquad \text{(by the definition \eqref{eq def of Delta})},
\end{align*} 
where for the second equality we used the fact that $v_{\varpi_i}$ 
is a highest 
weight vector in $V_{\varpi_i}$. 
Combining this with \eqref{eq LR 10 rewrite} and \eqref{eq LR rewrite 20}, we obtain
\begin{align*}
 \Delta_i([\dot{w}_0^{-1}x\dot{w}_0]) =(\Delta_{\varpi_i}(x\dot{w}_0))^{m_i} ,
\end{align*}
as desired.
\end{proof}

\vspace{10pt}

Recalling that we have $U^e_{\ge0} \stackrel{\cong}{\rightarrow} (\Gad)^f_{\ge0}$ given by $x \mapsto [\dot{w}_0^{-1}x\dot{w}_0]$ from \eqref{eq Uene and Uadene}, this proposition implies that 
\begin{align*}
 \Delta_i([y]) = \shapo{yv_{m_i\varpi_i}}{v^-_{m_i\varpi_i}} \ge 0
\end{align*}
for all $y\in (\Gad)^f_{\ge0}$ by Proposition~\ref{prop Delta is nonnegative}.
As we discussed at the end of the last section, our choice of the lowest weight vector $v^-_{m_i\varpi_i}$ might not agree with that of Lam--Rietsch. But their choice satisfies the same nonnegativity (\cite[Theorem~7.3]{lam2016total}). Therefore, the two choices must agree up to a positive scalar multiple.
Now we can state the following result of Lam--Rietsch in terms of $\Delta_i$. 

\begin{theorem}\label{thm Lam-Rietsch}
$($\cite[Theorem~7.3]{lam2016total}$)$
The map 
\begin{align*}
 (\Gad)^f_{\ge0} \rightarrow \R^{I}_{\ge0}
 \quad ; \quad 
 [y]\mapsto (\Delta_i([y]))_{i\in I}
\end{align*}
is a homeomorphism.
\end{theorem}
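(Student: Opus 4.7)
The plan is to deduce this statement directly from Lam--Rietsch's Theorem~7.3 in \cite{lam2016total} by identifying our map with theirs up to positive diagonal rescaling. The hypotheses already match: by the reduction carried out in Section~\ref{sec splittings} we may assume $G$ is a simply connected simple algebraic group, and the present section passes to the adjoint quotient $\Gad=G/Z_G$, which is simple of adjoint type; the pinning in Definition~\ref{def pinning of Gad} together with $\Ad_{\dot{w}_0^{-1}}e_i=-f_{i^*}$ (Lemma~\ref{lemm w0e=-f first}) ensures that $\Umad_{\ge 0}$ and $(\Gad)^f_{\ge 0}$ coincide with the objects appearing in loc.~cit.

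Next, I would compare the function $\Delta_i$ defined here with Lam--Rietsch's ``generalized minor''-type function. Both are of the form $[y]\mapsto\shapo{yv_{m_i\varpi_i}}{v^{\mathrm{lw}}}$ for some lowest weight vector $v^{\mathrm{lw}}\in V_{m_i\varpi_i}$; since the lowest weight space of an irreducible representation is one-dimensional, Lam--Rietsch's choice and our $v^-_{m_i\varpi_i}=\mathring{w}_0 v_{m_i\varpi_i}$ must differ by a nonzero scalar $c_i\in\C^{\times}$. To see that $c_i>0$, combine Proposition~\ref{prop Delta is nonnegative} with the isomorphism \eqref{eq Uene and Uadene} and Proposition~\ref{prop LR and our}: for every $[y]\in(\Gad)^f_{\ge 0}$, our $\Delta_i([y])$ equals $(\Delta_{\varpi_i}(x\dot{w}_0))^{m_i}\ge 0$ where $x\in U^e_{\ge 0}$ corresponds to $[y]$. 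Lam--Rietsch's function is likewise $\ge 0$ on this set by \cite[Theorem~7.3]{lam2016total}. Two linear functionals on $V_{m_i\varpi_i}$ that are proportional and simultaneously nonnegative (and not identically zero) on an open-dense subset of $(\Gad)^f_{\ge 0}$ must be related by a positive scalar, so $c_i>0$.

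Consequently, the map $[y]\mapsto(\Delta_i([y]))_{i\in I}$ differs from the Lam--Rietsch parametrization only by the coordinatewise scaling $(t_i)_{i\in I}\mapsto(c_i t_i)_{i\in I}$ on $\R^{I}_{\ge 0}$, which is a self-homeomorphism of $\R^I_{\ge 0}$. Composing with Lam--Rietsch's homeomorphism therefore yields the claimed homeomorphism $(\Gad)^f_{\ge 0}\xrightarrow{\sim}\R^I_{\ge 0}$.

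The main obstacle I anticipate is purely bookkeeping: verifying that Lam--Rietsch's conventions for the pinning (in particular, the sign choice of Chevalley generators and the resulting $U^-$ versus $U$ conventions) really do match the ones set up in Definition~\ref{def pinning of Gad} with the ``negative'' generators $e'_i=-e_i$, $f'_i=-f_i$. If the conventions disagree by an overall sign or by an inner automorphism of $T^{\mathrm{ad}}$, one must transport the Lam--Rietsch statement across this identification and check that the positivity of $c_i$ is preserved. Once this sign/convention comparison is done, the identification of the two parametrizations up to positive scaling is immediate and the theorem follows.
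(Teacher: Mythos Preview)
Your proposal is correct and matches the paper's treatment. The paper does not give a separate proof environment for this theorem; instead, in the paragraph immediately preceding the statement, it argues exactly as you do: the only discrepancy with \cite[Theorem~7.3]{lam2016total} is the choice of lowest weight vector $v^-_{m_i\varpi_i}$, both choices yield nonnegative values on $(\Gad)^f_{\ge0}$ (by Proposition~\ref{prop LR and our} and Proposition~\ref{prop Delta is nonnegative} on our side, and by \cite[Theorem~7.3]{lam2016total} on theirs), hence the scalar relating them is positive, and the Lam--Rietsch homeomorphism transfers to our $\Delta_i$ after a positive diagonal rescaling.
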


\vspace{10pt}

\subsection{Proof of Theorem~\ref{thm LR in our setting}}

We now give a proof of Theorem~\ref{thm LR in our setting} (for a simply connected semisimple algebraic group $G$). For the reader's convenience, we state the claim here again.\vspace{10pt}

\noindent
\textbf{Theorem.}\ 
\textit{
The map
\begin{align*}
 U^e_{\ge0} \rightarrow \R^{I}_{\ge0}
 \quad ; \quad
 x \mapsto (\Delta_{\varpi_1}(x\dot{w}_0),\ldots,\Delta_{\varpi_{\rkg}}(x\dot{w}_0))
\end{align*}
is a homeomorphism.
}

\vspace{0pt}

\begin{proof}
As we saw at the end of Section~\ref{sec splittings}, we may assume that $G$ is a (simply connected) simple algebraic group so that $\Gad$ is simple algebraic group of adjoint type.

We first consider a map with \textit{powers} $m_i$:
\begin{align}\label{eq the map with powers}
 U^e_{\ge0} \rightarrow \R^{I}_{\ge0}
 \quad ; \quad
 x \mapsto ((\Delta_{\varpi_i}(x\dot{w}_0))^{m_i})_{i\in I}.
\end{align}
By \eqref{eq Uene and Uadene} and Proposition~\ref{prop LR and our}, this is identified with a map
\begin{align*}
 (\Gad)^f_{\ge0} \rightarrow \R^{I}_{\ge0}
 \quad ; \quad
 [y] \mapsto (\Delta_{i}([y]))_{i\in I}.
\end{align*}
It now follows from Lam--Rietsch's result (Theorem~\ref{thm Lam-Rietsch}) that this map (and hence \eqref{eq the map with powers} as well) is a homeomorphism.
We now consider the map in the claim 
(without powers).
It is the composition of the map \eqref{eq the map with powers} and the map
\begin{align*}
 \R^{I}_{\ge0} \rightarrow \R^{I}_{\ge0}
 \quad ; \quad
 (a_i)_{i\in I} \mapsto ((a_i)^{1/m_i})_{i\in I}.
\end{align*}
Since the latter map is a homeomorphism, the claim follows.
\end{proof}

\vspace{30pt}

\section{Appendix}

\subsection{Proof of Lemma~\ref{lem closed embedding of TN 3}}\label{subsec Appendix 2}

We give a proof of Lemma~\ref{lem closed embedding of TN 3}.
For the reader's convenience, we state the claim here again.

\vspace{10pt}
\noindent
\textbf{Lemma.}
\textit{Under the closed embedding $G_J/B_J \hookrightarrow G/B$, the image of $(G_J/B_J)_{\ge0}$ is precisely $G_J/B_J\cap (G/B)_{\ge0}$.}

\begin{proof}
Let $g_J\in G_J$. What we need to prove is the following equivalence:
\begin{equation}\label{eq wwntp}
 g_JB_J \in (G_J/B_J)_{\ge0} \quad \Longleftrightarrow \quad g_JB \in (G/B)_{\ge0}.
\end{equation}
If $g_JB_J \in (G_J/B_J)_{\ge0}$, then we have
\begin{equation*}
g_JB_J\in \overline{(G_J/B_J)_{>0}}=\overline{(G_J)_{>0}B_J/B_J}.
\end{equation*}
Hence, under the embedding $G_J/B_J \hookrightarrow G/B$, we obtain 
\begin{equation*}
g_JB \in \overline{(G_J)_{>0}B/B}.
\end{equation*}
Since $(G_J)_{>0} \subseteq (G_J)_{\ge0} \subseteq G_{\ge0}$, it follows that 
\begin{equation*}
g_JB \in \overline{G_{\ge0}B/B} \subseteq \overline{(G/B)_{\ge0}} = (G/B)_{\ge0}.
\end{equation*}

To prove the opposite implication in \eqref{eq wwntp}, suppose that $g_JB\in (G/B)_{\ge 0}$.
Since $g_J\in G_J$, it follows from the Bruhat decomposition of $G_J/B_J$ that we may write
\begin{equation}\label{eq gjbj = xwbj}
 g_J B_J = x\dot{w}B_J
 \qquad \text{in $G_J/B_J$}
\end{equation}
for some $x\in U_J$ and $w\in W_J$, where we note that $\dot{w}\in G_J$ because of our choice of representatives given by \eqref{eq choice of representative 1} and \eqref{eq choice of representative 2}. We then have
\begin{equation*}
 g_J B = x\dot{w}B
 \qquad \text{in $G/B$}.
\end{equation*}
Let $t\in\R_{>0}$, and let $\xi=\exp(te)\in U$. Then we have $\xi\in U_{>0}$ by \cite[Proposition~5.9]{lusztig1994total}.
Hence, by $x\dot{w}B(=g_JB)\in (G/B)_{\ge 0}$ and Lemma~\ref{lem two facts for TNN}~(1), it follows that
\begin{equation*}
\xi x\dot{w}B
\in (G/B)_{\ge0} \cap (B^-eB/B).
\end{equation*}
Since $\xi x\dot{w}B \in B\dot{w}B/B$, we further obtain
\begin{equation*}
\xi x\dot{w}B
\in (G/B)_{\ge0} \cap (B^-eB/B)\cap (B\dot{w}B/B)=U^-(w)B/B,
\end{equation*}
where the last equality follows from Lemma~\ref{lem two facts for TNN}~(2).
Hence, we have
\begin{equation}\label{eq xi x w = u-b}
\xi x\dot{w} = u^- b
\end{equation}
for some $u^- \in U^-(w)\subseteq (U^-_J)_{\ge0}$ (since $w\in W_J$) and $b\in B$.
Let $P_J$ be the standard parabolic subgroup of $G$ associated to $J\subseteq I$ satisfying $B\subseteq P_J$. It admits the semidirect decomposition $P_J=V_JL_J$, where $V_J\subseteq U$ is the unipotent radical of $P_J$. 
Let us write 
\begin{equation*}
\xi = \xi_1\cdot \xi_2
\qquad (\xi_1\in V_J\subseteq U, \ \xi_2\in L_J).
\end{equation*}
Then we have $\xi_2\in U\cap L_J=U_J$.
Similarly, we write 
\begin{equation*}
b = b_1\cdot b_2
\qquad (b_1\in V_J\subseteq B, \ b_2\in L_J)
\end{equation*}
from which we have $b_2\in B\cap L_J$.
Now we obtain from \eqref{eq xi x w = u-b} that
\begin{equation*}
\xi_1\cdot (\xi_2 x\dot{w}) = u^- b_1 b_2 = b'_1 \cdot(u^- b_2)
\end{equation*}
for some $b'_1\in V_J$ since $u^- \in U^-_J\subseteq P_J$ and $V_J$ is normal in $P_J$.
Therefore, from the semidirect decomposition $P_J=V_JL_J$, we obtain
\begin{equation*}
 \xi_2 x\dot{w} = u^- b_2
 \qquad \text{in $L_J$}.
\end{equation*}
Since $\xi_2\in U_J$, we see that $b_2=(u^-)^{-1}\xi_2 x\dot{w}\in G_J$ so that $b_2\in B\cap G_J=B_J$.
Hence,
\begin{equation*}
 \xi_2 x\dot{w} B_J = u^- B_J
 \qquad \text{in $G_J/B_J$}.
\end{equation*}
Here, the right hand side lies in $(G_J/B_J)_{\ge0}$ since $u^-\in (U^-_J)_{\ge0}$ as we saw above. Thus, we have
\begin{equation*}
 \xi_2x\dot{w} B_J \in (G_J/B_J)_{\ge0}.
\end{equation*}
Note that we have $\xi_2 = p(\xi)=p(\exp(te))$, where $p\colon P_J\rightarrow L_J$ is the canonical projection.
By taking $t\rightarrow0$, we have $\xi_2\rightarrow 1$ which implies that
\begin{equation*}
 x\dot{w} B_J \in (G_J/B_J)_{\ge0}
\end{equation*}
since $(G_J/B_J)_{\ge0}$ is closed in $G_J/B_J$.
Recalling \eqref{eq gjbj = xwbj}, we now obtain the desired conclusion $g_JB_J\in (G_J/B_J)_{\ge0}$.
\end{proof}

\vspace{10pt}

\subsection{Proof of the second claim of Lemma~\ref{eq alpha surjectivity}}

We give a proof of the second claim of Lemma~\ref{eq alpha surjectivity}.
For the reader's convenience, we state Lemma~\ref{eq alpha surjectivity} here again.
Recall that $(T_J)_{>0}$ is the subgroup of $T_J$ generated by $\alpha^{\vee}_i(z)$ for $i\in I$ and $z\in\R_{>0}$.

\vspace{10pt}
\noindent
\textbf{Lemma.}
\textit{For $t\in (T_J)_{>0}$, we have $\varpi_i(t),\alpha_i(t)\in\R_{>0}$ for all $i\in I$.
Moreover, we have isomorphisms
\begin{align*}
 &(T_J)_{>0} \rightarrow (\R_{>0})^J
 \quad ; \quad
 t\mapsto (\varpi_i(t))_{i\in J} , \\
 &(T_{J})_{>0}\rightarrow (\R_{>0})^{J}
 \quad ; \quad
 t\mapsto (\alpha_i(t))_{i\in J}.
\end{align*} }

\begin{proof}
Recall that the first claim follows from the description of $(T_J)_{>0}$ above.
For the second claim, it suffices to prove the claim for the case $J=I$ (so that $T_J=T$) since the claim for a general case can be deduced by applying such result to the maximal torus $T_J$ of $G_J$.
Based on the first claim, we consider the homomorphisms
\begin{align}\label{eq varpi iso 5}
 &T_{>0}\rightarrow (\R_{>0})^{I}
 \quad ; \quad
 t\mapsto (\varpi_1(t),\ldots,\varpi_{\rkg}(t)), \\ \label{eq varpi iso 3}
 &T_{>0}\rightarrow (\R_{>0})^{I}
 \quad ; \quad
 t\mapsto (\alpha_1(t),\ldots,\alpha_{\rkg}(t)).
\end{align} 
We claim that \eqref{eq varpi iso 5} is a bijection.
This can be explained as follows. By the definition of $T_{>0}$ (see above), we have a map
\begin{align*}
 (\R_{>0})^{I} \rightarrow T_{>0}
 \quad ; \quad
 (z_1,\ldots,z_{\rkg}) \mapsto \alpha^{\vee}_1(z_1)\cdots \alpha^{\vee}_m(z_1),
\end{align*}
and this is the inverse map of \eqref{eq varpi iso 5} since the fundamental weights and simple coroots are dual to each other. In particular, \eqref{eq varpi iso 5} is a bijection as claimed above.

Let $C=(c_{i,j})_{i,j\in I}$ be the Cartan matrix of $\Phi$ given by $c_{i,j}=\langle\alpha_i,\alpha^{\vee}_j\rangle$ for $i,j\in I$.
This gives us a map
\begin{align}\label{eq varpi iso 6}
 (\R_{>0})^{I} \rightarrow (\R_{>0})^{I}
 \quad ; \quad 
 (z_1,\ldots,z_{\rkg}) \mapsto (z^{c_1},\ldots, z^{c_{\rkg}}),
\end{align}
where each $z^{c_i}\coloneqq z_1^{c_{i,1}}\cdots z_{\rkg}^{c_{i,\rkg}}$ is the monomial whose exponents are given by the $i$-th row vector $c_i$ of $C$.
By construction, this map sends $(\varpi_1(t),\ldots,\varpi_{\rkg}(t))$ to $(\alpha_1(t),\ldots,\alpha_{\rkg}(t))$ for all $t\in T_{>0}$.
Hence, the map \eqref{eq varpi iso 3} is the composition of \eqref{eq varpi iso 5} and \eqref{eq varpi iso 6}. So it suffices to show that \eqref{eq varpi iso 6} is a bijection.
Under the identification $\R_{>0}\cong\R$ given by the logarithm, \eqref{eq varpi iso 6} is identified with the linear map
\begin{align*}
 \R^{I} \rightarrow \R^{I}
 \quad ; \quad
 x \mapsto Cx.
\end{align*}
Since we have $\det C\ne0$, this is a bijection. Hence, the claim follows.
\end{proof}

\vspace{30pt}

\bibliographystyle{amsplain}
\bibliography{main}

\end{document}